\newtheorem{lemma}{Lemma}
\newtheorem{proposition}[lemma]{Proposition}
\newtheorem{theorem}[lemma]{Theorem}
\newtheorem{corollary}[lemma]{Corollary}
\newtheorem{claim}{Claim}
\def\zp{{\mathbb Z}/p{\mathbb Z}}
\newcommand{\subgp}[1]{\langle{#1}\rangle}
\begin{document}

\title{A Structure Theorem for Small Sumsets in Nonabelian Groups}

\author{ Oriol Serra, Gilles Z\'emor}

\date{February 28, 2012}
\maketitle

\begin{abstract}
Let $G$ be an arbitrary finite group and let $S$ and $T$ be two subsets
such that $|S|\geq 2$, $|T|\geq 2$, and $|TS|\leq |T|+|S|-1\leq
|G|-2$.
We show that if $|S|\leq |G|-4|G|^{1/2}$ then either $S$ is a
geometric progression or there exists a non-trivial subgroup $H$ such
that either $|HS|\leq |S|+|H|-1$ or $|SH|\leq |S|+|H|-1$. 
This extends to the nonabelian case classical reults for abelian groups.
When we
remove the hypothesis $|S|\leq |G|-4|G|^{1/2}$ we show the existence
of counterexamples to the above characterization
whose structure is described precisely.
\end{abstract}

\section{Introduction}

Let $(G,+)$ be a finite abelian group written additively. Let $S$ be a subset of $G$
such that $T+S\neq G$ and
\begin{equation}
  \label{eq:-2}
  |T+S|\leq |T|+|S|-2
\end{equation}
for some subset $T$ of
$G$. A Theorem of Mann \cite{MAN53} says that $S$ must be well covered
by   cosets of a subgroup. More precisely, there must exist a
proper subgroup $H$ of $G$ such that
  $$|S+H| \leq |S| + |H| -2.$$
Mann's Theorem can be thought of as simplified, or one-sided, version
of Kneser's Theorem \cite{KNE} which gives a structural result for the pair of subsets
$\{S,T\}$ rather than a single subset. If one weakens the condition
\eqref{eq:-2} to
\begin{equation}
  \label{eq:-1}
  |T+S|\leq |T|+|S|-1
\end{equation}
for some set $T$ such that $|S+T|\leq |G|-2$, then a structural change
occurs because the sets $S$ and $T$ can be arithmetic progressions
and not well covered by cosets. However, this is the only alternative
i.e. if $|T|\geq 2$ and $S$ is not an arithmetic progression, then a simple,
one-sided, version of the Kempermann Structure Theorem \cite{KEM} says that there
must exist a proper subgroup such that
\begin{equation}
  \label{eq:H-1}
  |S+H| \leq |S| + |H| -1.
\end{equation}

In the present work we are interested in the nonabelian counterpart of
the above results. Caution is in order because the two-sided abelian
additive theorems do not seem to generalize. In particular
counter-examples to the intuitive nonabelian generalization of
Kneser's Theorem were found by Olson \cite{ols86} and the second
author \cite{z94}. However,
Mann's theorem was generalized to the nonabelian
setting \cite{z94,yh99}. It was obtained that, if $S$ is a subset of a
finite group $(G,\times)$ (from now on written multiplicatively to
emphasize that $G$ is not necessarily abelian)
for which there is a subset $T$ such that
$TS\neq G$ and
$$|TS|\leq |T|+|S|-2,$$
then there must exist a proper subgroup $H$ such that $S$
is well-covered by either left or right cosets modulo a subgroup $H$,
i.e. we have
$$\text{either}\; |SH| \leq |S| + |H| -2\;\text{or}\;
  |SH| \leq |H| + |S| -2.$$
Note that the difference with the abelian case is that we cannot
control whether $S$ is covered by left or right cosets.

Our main result is to obtain a structural result on $S$ under a
generalization of \eqref{eq:-1} to nonabelian groups. Specifically, we
prove:
\begin{theorem}\label{thm:main}
  Let $S$ be a subset of a finite group $G$ for which there exists a
  subset $T$ such that $2\leq |T|$ and $|TS|\leq \min(|G|-2,|T|+|S|-1)$.
  Then one of the following holds
  \begin{enumerate}[\it (i)]
  \item $S$ is a geometric progression, i.e. there exist $g,a\in
    G$ such that one of the two sets $gS,Sg$ equals $\{1,a,a^2,\ldots
    ,a^{|S|-1}\}$,
  \item there exists a proper subgroup $H$ of $G$ such that
    $$|HS^{\varepsilon}| \leq |H|+|S|-1$$
  where $S^{\varepsilon}$ denotes either $S$ or $S^{-1}$.
  \item there exists a subgroup $H$ and an element $a$
    of $G$ such that $|HaH|=|H|^2$ and, letting $A=H\cup Ha$,
    $$|AS^{\varepsilon}| = |A| + |S|-1 = |G|- |A|.$$
  \end{enumerate}
\end{theorem}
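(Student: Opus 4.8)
The plan is to treat the critical case $|TS|=|T|+|S|-1$ by Hamidoune's isoperimetric method, the strict inequality being an immediate consequence of the nonabelian Mann theorem recalled above. Indeed, if $|TS|\le|T|+|S|-2$ then $|TS|\le|G|-2$ forces $TS\ne G$, and Mann's theorem yields a nontrivial proper subgroup $H$ with $|SH|\le|S|+|H|-2$ or $|HS|\le|H|+|S|-2$; taking inverses and using $H=H^{-1}$ gives $|SH|=|HS^{-1}|$, so either inequality is an instance of conclusion (ii). Hence I may assume $|TS|=|T|+|S|-1$. Replacing $(T,S)$ by $(Ts_0,\,s_0^{-1}S)$ for some $s_0\in S$ preserves the three cardinalities and, after conjugating the subgroup eventually produced, each of (i)--(iii); so I may also assume $1\in S$, whence $X\subseteq XS$ for every $X$.

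Next I would set up the isoperimetric framework relative to right multiplication by $S$. For $X$ with $XS\ne G$ put $\G(X)=XS\setminus X$, so that $|XS|=|X|+|\G(X)|$, and let $\k=\min|\G(X)|$ over nonempty $X$ with $XS\ne G$. Singletons give $\k\le|S|-1$, while $T$ (legitimate since $|TS|\le|G|-2$ ensures $TS\ne G$) satisfies $|\G(T)|=|S|-1$. If $\k\le|S|-2$, some witness $X$ has $|XS|\le|X|+|S|-2$ with $XS\ne G$, and Mann again delivers conclusion (ii). There remains the genuinely critical regime $\k=|S|-1$, in which $T$, and every singleton, is a \emph{fragment}; I call a fragment of least cardinality an \emph{atom}, and since the boundary is equivariant ($\G(gX)=g\,\G(X)$) I may translate an atom so that it contains $1$ and study this $1$-atom $A$.

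The heart of the matter is an atom-structure theorem, which I would establish from the submodularity of $X\mapsto|\G(X)|$ (so that fragments are closed under the intersection and union keeping the boundary minimal), together with the translation-invariance of fragments, which makes $a^{-1}A$ a $1$-atom for each $a\in A$ and forces $A$ to be a subgroup once the $1$-atom is unique. The trichotomy it should produce is: either (a) $|A|=1$, or (b) $A$ is a subgroup $H$, or (c) $A$ is a near-subgroup $H\cup Ha$ with $|HaH|=|H|^2$. In case (b), $A$ being a fragment gives $|HS|=|H|+|S|-1$ with $HS\ne G$, which is exactly conclusion (ii) for a proper nontrivial $H$. In case (c) the fragment identity $|AS|=|A|+|S|-1$ persists, and the special geometry of a near-subgroup forces the complement of $AS$ to have size exactly $|A|$, i.e. $|AS|=|G|-|A|$, which is conclusion (iii). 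In case (a) all atoms are trivial, and the rigidity this imposes on consecutive boundaries should propagate, via the structure of the larger non-atomic fragments such as $T$, to show that $S$ is a geometric progression, i.e. conclusion (i).

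The main obstacle is precisely this atom-structure theorem in the nonabelian setting, and it splits into three delicate points. The abelian transform and compression arguments are unavailable, so the subgroup conclusion (b) must be forced from fragment calculus alone; the near-subgroup case (c), which is the residue of the Olson and Zémor counterexamples to a naive nonabelian Kneser theorem, has to be isolated and shown to saturate $|AS|=|G|-|A|$; and extracting an honest geometric progression from trivial atoms (a) is the most technical step, since the left/right asymmetry means the covering can occur on either side. I would handle the orientation by running the same analysis for the connection set $S^{-1}$ with witness $T^{-1}$, which is legitimate because $|S^{-1}T^{-1}|=|(TS)^{-1}|=|S^{-1}|+|T^{-1}|-1$; the two runs together account for the $S^{\varepsilon}$ in conclusions (ii) and (iii) and for both orientations of the progression in (i).
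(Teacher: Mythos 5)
Your opening reductions (dispatching $|TS|\le |T|+|S|-2$ via the nonabelian Mann theorem, normalizing $1\in S$, noting $T$ witnesses $\kappa\le |S|-1$) are fine and agree with the paper. But the isoperimetric framework you then set up is the wrong one, and the proof collapses exactly at the point you defer. You define $\kappa$ as the minimum of $|XS\setminus X|$ over \emph{nonempty} $X$ with $XS\neq G$, and an atom as a fragment of least cardinality --- i.e.\ you work with $1$-fragments and $1$-atoms. In the critical regime $\kappa=|S|-1$, which you correctly identify as the only remaining case, every singleton is a fragment (since $1\in S$, $|\partial\{x\}|=|xS|-1=|S|-1$), so every $1$-atom is a singleton. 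Your hoped-for trichotomy (a) $|A|=1$, (b) $A$ a subgroup, (c) $A=H\cup Ha$, is therefore vacuous: case (a) always holds and carries no information. In particular case (a) cannot ``propagate rigidity'' to force a geometric progression; e.g.\ $S=H\cup\{x\}$ with $H$ a nontrivial proper subgroup and $x\notin H$ has $\kappa_1(S)=|S|-1$ and only singleton $1$-atoms, yet $S$ need not be a progression (it satisfies (ii), not (i)). The object that actually carries the trichotomy is the \emph{$2$-atom}: minimize $|XS\setminus X|$ over $X$ with $|X|\ge 2$ \emph{and} $|G\setminus XS|\ge 2$. This is precisely why the paper's hypothesis is $|TS|\le |G|-2$ and why all of its machinery is stated for $k=2$.

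Even after replacing your atoms by $2$-atoms, the structure theorem you label ``the main obstacle'' is not a delicate lemma to be filled in later --- it is essentially the entire paper, and parts of your sketch of it would fail as stated. Submodularity of $X\mapsto|\partial X|$ does not by itself give the intersection property for $2$-atoms: one needs $|G|\ge 2\alpha_2+\kappa_2$, which holds for at least one of $S$, $S^{-1}$ but not necessarily both, so the side on which one argues is \emph{forced}, not a matter of symmetrically ``running both analyses'' as in your last paragraph. The paper's route is: (1) a nonperiodic $2$-atom has cardinality at most $\kappa_2(S)-|S|+3\le 2$ (Proposition~\ref{prop:m+3}, which itself needs the Sidon-type Lemma~\ref{lem:sidon}); (2) a left-periodic $2$-atom either yields a subgroup $2$-fragment or is a union of exactly two right cosets $H\cup Ha$ (Lemmas~\ref{lem:k2=k1} and \ref{lem:2coset}); (3) in the latter case, the saturation $|AS|=|G|-|A|$ in conclusion (iii) is the paper's main new result (Theorem~\ref{thm:2coset}), proved by showing that left multiplication by $A$ induces an arc-transitive quotient graph $X/H$ and deriving a contradiction from its arc $3$- and $4$-connectivity unless $HS$ misses exactly two cosets. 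Your one-line justification of (c) --- ``the special geometry of a near-subgroup forces the complement of $AS$ to have size exactly $|A|$'' --- is a placeholder for that whole section, and nothing in your framework produces it. So the proposal is a plausible outline of the \emph{shape} of the theorem, but it is attached to the wrong isoperimetric order and omits the arguments that constitute the proof.
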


Note that property $(iii)$ collapses to a particular case of $(i)$ if
the group $G$ is abelian, since then we can only have
$H=\{1\}$. Condition $|HaH|=|H|^2$ in $(iii)$ also implies that it can
only occur for subsets $S$ of $G$ that are quite close to being the
whole group, since we must clearly have $|H|\leq |G|^{1/2}$ and
$|S|=|G|+1-4|H|$, in other words:

\begin{corollary}\label{cor:sqrt}
  Let $S$ be a subset of a finite group $G$ for which there exists a
  subset $T$ such that $2\leq |T|$ and $|TS|\leq
  \min(|G|-2,|T|+|S|-1)$ and such that $|S|\leq |G|-4|G|^{1/2}$: then
  \begin{itemize}
  \item either $S$ is a geometric progression,
  \item or there exists a proper subgroup $H$ of $G$ such that
    $$|HS^{\varepsilon}| \leq |H|+|S|-1.$$
  \end{itemize}
\end{corollary}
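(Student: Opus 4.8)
The plan is to deduce the corollary directly from Theorem~\ref{thm:main}. The hypotheses of the corollary are exactly those of the theorem together with the single extra constraint $|S|\le |G|-4|G|^{1/2}$, so I would begin by applying Theorem~\ref{thm:main} to the pair $(S,T)$, which yields one of the three alternatives $(i)$, $(ii)$, $(iii)$. Alternatives $(i)$ and $(ii)$ are verbatim the two possible conclusions of the corollary, so there is nothing to prove in those cases. The entire content of the argument is therefore to show that the additional size hypothesis is incompatible with alternative $(iii)$, which I would dispose of by a short counting argument.

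To rule out $(iii)$, I would extract two numerical constraints from its statement. First, since $HaH\subseteq G$ and $|HaH|=|H|^2$, we obtain $|H|^2\le |G|$, that is $|H|\le |G|^{1/2}$. Second, I would determine $|A|$ for $A=H\cup Ha$. Here the double-coset condition $|HaH|=|H|^2$ plays the role of a nondegeneracy hypothesis: if $a\in H$ then $HaH=H$ and the condition forces $|H|=|H|^2$, hence $|H|=1$ (so $a=1$ and $|A|=|H|$); while if $a\notin H$ the right cosets $H$ and $Ha$ are disjoint, so that $|A|=2|H|$. Thus $|A|=2|H|$ in every case except the lone degenerate possibility $|H|=1$, $a=1$, which I would treat separately.

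With $|A|=2|H|$ in hand, I would read off $|S|$ from the equality chain in $(iii)$. From $|A|+|S|-1=|G|-|A|$ we get $|S|=|G|+1-2|A|=|G|+1-4|H|$, and combining this with $|H|\le|G|^{1/2}$ gives
\[
  |S| \;=\; |G|+1-4|H| \;\ge\; |G|+1-4|G|^{1/2} \;>\; |G|-4|G|^{1/2},
\]
contradicting the standing hypothesis $|S|\le |G|-4|G|^{1/2}$. Hence $(iii)$ cannot occur. The leftover degenerate case $|A|=|H|=1$ gives $|S|=|G|-1$, which likewise exceeds $|G|-4|G|^{1/2}$ and is excluded by the same bound. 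Therefore only $(i)$ and $(ii)$ survive, which is exactly the assertion of the corollary.

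Since the whole statement is a direct consequence of the theorem, there is no serious obstacle: the only point requiring care is the determination $|A|=2|H|$, i.e.\ confirming that $a\notin H$ in the nondegenerate regime, together with verifying that the stray case $a\in H$ (which forces $|H|=1$) is also eliminated by the size bound. I would dispatch these bookkeeping sub-cases at the outset and then let the single displayed inequality finish the argument.
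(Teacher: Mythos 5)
Your proposal is correct and is essentially the paper's own argument: the paper disposes of case \emph{(iii)} by the same counting, noting that $|HaH|=|H|^2$ forces $|H|\leq |G|^{1/2}$ and $|S|=|G|+1-4|H|$, which is incompatible with $|S|\leq |G|-4|G|^{1/2}$. Your explicit treatment of the degenerate possibility $a\in H$ (forcing $|H|=1$) is a minor bookkeeping point the paper leaves implicit, but it changes nothing of substance.
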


The condition $|S|\leq |G|-4|G|^{1/2}$ in Corollary~\ref{cor:sqrt}
is unlikely to be improved upon asymptotically, for we shall show
in the final section that, assuming a number-theoretic conjecture
(the existence of an infinite number of Sophie Germain primes),
there exist infinite
families of groups $G$ with subsets $S$ such that $|S| \leq
|G|-O(\sqrt{|G|})$
and that satisfy the hypothesis of Corollary~\ref{cor:sqrt} but not
its conclusion.

We shall use Hamidoune's atomic method to derive
Theorem~\ref{thm:main}. If $S$ is a generating subset containing $1$
of a finite group, then $A$ is a $k$-atom of $S$ if it is of minimum
cardinality among subsets $X$ such that $|X|\geq k$, $|XS|\leq |G|-k$
and $|XS|-|X|$ is of minimum possible cardinality (see
Section~\ref{sec:prelim} for detailed definitions). The isoperimetric
or atomic method was introduced by Hamidoune in \cite{yh96} and
the study of $k$-atoms was used in a number of papers
including \cite{hsz06,hsz08} to give structural
results in abelian groups on sets $S,T$ such that $|S+T|\leq |S|+|T|+m$.
The study of $k$-atoms
for $k=1,2$ has also yielded generalizations to nonabelian groups of several
addition theorems \cite{z94,yh96,hls98,yh99,hsz07}.
The structure of $2$-atoms is again crucial to our present study.
Under the hypothesis of Theorem~\ref{thm:main},
when a $2$-atom of $S$ or $S^{-1}$ has cardinality $2$, it yields
property $(i)$. When a $2$-atom of $S$ or $S^{-1}$ is a subgroup, it
yields property $(ii)$. It follows from results in \cite{yh00} that in
the abelian case, and in some particular nonabelian cases, under
the hypothesis of Theorem~\ref{thm:main}, any
$2$-atom containing the unit element must either have cardinality $2$
or be a subgroup. However, deriving a similar result in
the general nonabelian case has not been  attempted  until recently, when the topic
has attracted the attention of a number of researchers in the area of so--called approximate groups, see e.g. \cite{taoblog}.

In one of his last preprints
\cite{yh09}, Hamidoune shows that, under the hypothesis of
Theorem~\ref{thm:main}, if $(i)$ and $(ii)$ do not hold then any
$2$-atom of $S$ containing $1$ must be of the form $A=H\cup Ha$ for
some subgroup $H$. He does not try to find out however, whether these
particular $2$-atoms can actually exist, and our original contribution
is to show that, somewhat surprisingly, this last case can indeed
occur, but only just, namely only if $HS^\varepsilon$ consists of exactly all but two right
$H$-cosets of $G$.

The paper is organised as follows: in the Section \ref{sec:prelim} we introduce
the isoperimetric tools that we shall need in the sequel. We then
proceed in Section \ref{sec:nonperiodic} to obtain a general upper bound
for the cardinality of nonperiodic $2$--atoms, thus extending an analogous result
obtained by Hamidoune \cite{yh00} for abelian groups, for normal sets
in simple groups by Arad and Muzychuk \cite{am97} and in \cite{bps11}
for torsion-free groups. Section \ref{sec:periodic} contains
a somewhat shortened account of Hamidoune's result on $2$-atoms
obtained in \cite{yh09}. We then go on to study the particular case
of $2$-atoms equal to the union of two cosets with the purpose of
showing that this last case can mostly not exist, except in an
exceptional degenerate case. Our main method will be to show that
the exceptional $2$-atom defines an edge-transitive graph on cosets
of a subgroup of $G$ and a close study of the edge-connectivity of
this graph will rule out all subsets $S$ but the ones described above.
We conclude by exhibiting a family of examples that show that the leftover case can
indeed occur.

\section{Notation and preliminary results}\label{sec:prelim}

Let $G$ be a finite group and let $S\subset G$ be a generating subset
containing the unit element $1$. For $X$ a subset of $G$ we shall
write
$$\partial_SX = XS\setminus X$$
and $X^{*S} = G\setminus (X\cup \partial_SX)$. When the set $S$ is
implicit and no confusion can occur we simply write $\partial X$ and $X^*$.

We shall say that $S$ is {\it
  $k$-separable} if there exists $X\subset G$ such that $|X|\geq k$
and $|X^*|\geq k$. Suppose that $S$ is $k$-separable. The {\it $k$-th
isoperimetric number} of $S$ is defined by
  $$\kappa_k(S) = \min\{\partial X\; |\; |X|\geq k, |X^*|\geq k\}.$$
For a $k$-separable set $S$, a subset $X$ achieving the above minimum
is called a $k$-fragment of $S$. A $k$-fragment with minimal
cardinality is called a $k$-atom.

{\bf Comments.}
The above definitions can be formulated, as usually done by Hamidoune, 
in the more general context of
directed graphs \cite{yh00} where sets $X$ are subsets of vertices,
$\partial X$ (the boundary of $X$) is the set of neighbouring vertices
of $X$ not in $X$, and $X^*$ is the complement i.e. the set of
vertices neither in $X$ nor $\partial X$. We are dealing with the case
of a Cayley graph $G$ with set of generators equal to the non unit
elements of $S$. The following properties are straightforward and will
be used throughout.
\begin{itemize}
\item If $A\subseteq B$ then $B^*\subseteq A^*$.
\item If $S$ is $k$--separable, then $1\leq \kappa_{k-1}(S)\leq
  \kappa_k(S)$. We also have $\kappa_k(S)=\kappa_k(S^{-1})$.
\item For any $s\in S^{-1}$, $\kappa_k(S) =\kappa_k(Ss)$ and $k$--fragments
  (resp. $k$--atoms) of $S$ are $k$--fragments (resp. $k$--atoms) of $Ss$.
\item If $F$ is a $k$--fragment of $S$, then $F^*$ is a $k$--fragment of
  $S^{-1}$.
\item If $F$ is a $k$--fragment (resp. $k$-atom) of $S$ then every left
  translate $gF$ is a
  $k$-fragment (resp. $k$--atom) of $S$ for any group element $g$.
\end{itemize}

Finally we denote by $\alpha_k(S)$ the size of a $k$--atom of $S$.

The following two theorems obtained by Hamidoune \cite{yh00} are basic
pieces of the atomic method. The first two are intersection properties
of atoms.

\begin{theorem}[The intersection property for atoms]\label{thm:int}
  Let $S$ be a $k$--separable subset of the group $G$. Let $A$
  and $B$ be two distinct $k$--atoms of $S$.
    If $|G|\ge 2\alpha_k(S)+\kappa_k(S)$, then $|A\cap B|\leq k-1$.
\end{theorem}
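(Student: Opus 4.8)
The plan is to run the standard submodularity argument for the boundary operator and then force $A\cap B$ to be a $k$-fragment that is strictly smaller than an atom, which is impossible by minimality. First I would record the submodular inequality for $X\mapsto|\partial X|$. Since $1\in S$ we have $X\subseteq XS$ and $|XS|=|X|+|\partial X|$, while $(A\cap B)S\subseteq AS\cap BS$ and $(A\cup B)S=AS\cup BS$. Adding $|(A\cap B)S|+|(A\cup B)S|\le |AS\cap BS|+|AS\cup BS|=|AS|+|BS|$ and cancelling $|A\cap B|+|A\cup B|=|A|+|B|$, this collapses to
$$|\partial(A\cap B)|+|\partial(A\cup B)|\le |\partial A|+|\partial B|=2\kappa_k(S).$$

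Next I would argue by contradiction, assuming $|A\cap B|\ge k$. The aim is to show that \emph{both} $A\cap B$ and $A\cup B$ are admissible in the definition of $\kappa_k$, i.e.\ that each set, together with its $*$-complement, has cardinality at least $k$; then each boundary is at least $\kappa_k(S)$. For $A\cap B$ this is easy: $|A\cap B|\ge k$ holds by assumption, and from $A\cap B\subseteq A$ together with the monotonicity $A'\subseteq B'\Rightarrow (B')^*\subseteq (A')^*$ we get $|(A\cap B)^*|\ge |A^*|=|G|-\alpha_k(S)-\kappa_k(S)\ge \alpha_k(S)\ge k$, where the hypothesis $|G|\ge 2\alpha_k(S)+\kappa_k(S)$ enters. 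Hence $A\cap B$ is a valid fragment candidate and $|\partial(A\cap B)|\ge \kappa_k(S)$.

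The main obstacle is establishing the same admissibility for $A\cup B$: here $|A\cup B|\ge|A|=\alpha_k(S)\ge k$ is immediate, but $|(A\cup B)^*|\ge k$ is not, and the naive estimate $|(A\cup B)^*|=|A^*\cap B^*|\ge |A^*|+|B^*|-|G|$ is far too weak. The resolution is to feed the previous step back into submodularity: from $|\partial(A\cap B)|\ge\kappa_k(S)$ the displayed inequality yields $|\partial(A\cup B)|\le\kappa_k(S)$, whence
$$|(A\cup B)^*|=|G|-|A\cup B|-|\partial(A\cup B)|\ge |G|-\bigl(2\alpha_k(S)-|A\cap B|\bigr)-\kappa_k(S)\ge k,$$
again exactly because $|G|\ge 2\alpha_k(S)+\kappa_k(S)$ and $|A\cap B|\ge k$. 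So $A\cup B$ is admissible, giving $|\partial(A\cup B)|\ge\kappa_k(S)$; combined with the submodular upper bound, both boundaries equal $\kappa_k(S)$, so $A\cap B$ is itself a $k$-fragment. Finally, since $A$ and $B$ are distinct atoms of the common cardinality $\alpha_k(S)$, neither can contain the other, so $A\cap B\subsetneq A$ and $|A\cap B|<\alpha_k(S)$, contradicting the minimality of atoms among fragments. Thus $|A\cap B|\le k-1$. I expect the only delicate point to be the separability check for $A\cup B$, which is precisely where the bound on $|G|$ is used, and where it appears to be sharp.
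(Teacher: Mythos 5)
Your proof is correct. The paper does not actually prove this theorem --- it is quoted from Hamidoune \cite{yh00} as a basic tool --- and your argument (boundary submodularity from $(A\cap B)S\subseteq AS\cap BS$ and $(A\cup B)S=AS\cup BS$, admissibility of $A\cap B$ via monotonicity of the $*$-complement, feeding that back through submodularity to get admissibility of $A\cup B$, then contradicting minimality of atoms) is precisely the classical proof from that reference, with the hypothesis $|G|\ge 2\alpha_k(S)+\kappa_k(S)$ entering at exactly the two places where it is needed.
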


Note that if $\alpha_2(S)\leq\alpha_2(S^{-1})$ then we have 
$|G|\ge 2\alpha_k(S)+\kappa_k(S)$ and the intersection property holds
for atoms of $S$. Therefore, if the intersection property does not
hold for the atoms of $S$ then it holds for the atoms of $S^{-1}$.
If $\alpha_2(S)\leq\alpha_2(S^{-1})$ the following more general result holds:

\begin{theorem}[The fragment-atom intersection property]\label{thm:intAF}
  Let $S$ be a $k$--separ\-able subset of the group
  $G$. Suppose $\alpha_2(S)\leq \alpha_2(S^{-1})$ and let $A$
  and $F$ be a $k$--atom and a $k$--fragment of $S$ respectively.
  Then
   \begin{itemize}
  \item either $A\subset F$
  \item or $|A\cap F|\leq k-1$.
  \end{itemize}
\end{theorem}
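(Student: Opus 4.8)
The plan is to prove the contrapositive form of the dichotomy: I assume $|A\cap F|\ge k$ and show that this forces $A\subseteq F$. The whole argument rests on the submodularity of the boundary operator in the Cayley graph, namely
\[
|\partial(X\cap Y)|+|\partial(X\cup Y)|\le|\partial X|+|\partial Y|\qquad(X,Y\subseteq G),
\]
together with the two elementary relations $(A\cap F)^{*}\supseteq A^{*}\cup F^{*}$ and $(A\cup F)^{*}=A^{*}\cap F^{*}$, both of which follow at once from $X^{*}=G\setminus XS$ (valid since $1\in S$).

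First I would check that $A\cap F$ is a legitimate set for the isoperimetric problem. Because $(A\cap F)^{*}\supseteq A^{*}$ and $A$ is a $k$--atom, $|(A\cap F)^{*}|\ge|A^{*}|\ge k$; since $|A\cap F|\ge k$ by assumption, $A\cap F$ is $k$--separating, whence $|\partial(A\cap F)|\ge\kappa_k(S)$. Now suppose, for contradiction, that $A\not\subseteq F$. Then $A\cap F\subsetneq A$, so $|A\cap F|<\alpha_k(S)$; and if we had $|\partial(A\cap F)|=\kappa_k(S)$, then $A\cap F$ would be a $k$--fragment strictly smaller than the atom $A$, which is impossible. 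Hence $|\partial(A\cap F)|\ge\kappa_k(S)+1$, and submodularity (using $|\partial A|=|\partial F|=\kappa_k(S)$) yields $|\partial(A\cup F)|\le\kappa_k(S)-1$.

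The main obstacle is then to contradict this last inequality, i.e.\ to show that $A\cup F$ is after all $k$--separating, so that $|\partial(A\cup F)|\ge\kappa_k(S)$. This is exactly where the reasoning used for two atoms (Theorem~\ref{thm:int}) breaks down: there one bounds $|A\cup B|\le 2\alpha_k(S)-k$ using that $B$ too is an atom, whereas here $F$ may be a \emph{large} fragment, making $|A\cup F|$ large and letting $|(A\cup F)^{*}|$ drop below $k$ a priori. I expect this to be the crux, and it is precisely here that the hypothesis must enter. The way I would use it is as follows: $F^{*}$ is a $k$--fragment of $S^{-1}$, so by the hypothesis (which guarantees $\alpha_k(S)\le\alpha_k(S^{-1})$) we get $|F^{*}|\ge\alpha_k(S^{-1})\ge\alpha_k(S)=|A|$; since $|F^{*}|=|G|-|F|-\kappa_k(S)$, this delivers the crucial bound $|F|\le|G|-|A|-\kappa_k(S)$.

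With this bound in hand the estimate closes. Writing $|A\cup F|=|A|+|F|-|A\cap F|$ and inserting $|\partial(A\cup F)|\le\kappa_k(S)-1$ and then $|F|\le|G|-|A|-\kappa_k(S)$,
\[
|(A\cup F)^{*}|=|G|-|A\cup F|-|\partial(A\cup F)|\ge|G|-|A|-|F|+|A\cap F|-\kappa_k(S)+1\ge|A\cap F|+1\ge k+1,
\]
so $A\cup F$ is $k$--separating and $|\partial(A\cup F)|\ge\kappa_k(S)$, contradicting the previous paragraph. Therefore $A\subseteq F$, as desired. I would finally remark that the only facts about $S^{-1}$ used are that $F^{*}$ is a $k$--fragment of $S^{-1}$ (a property recorded in Section~\ref{sec:prelim}) and the size comparison $\alpha_k(S)\le\alpha_k(S^{-1})$ supplied by the hypothesis; no separate analysis of $S^{-1}$--atoms is needed.
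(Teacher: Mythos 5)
The paper itself gives no proof of Theorem~\ref{thm:intAF} --- it is quoted from Hamidoune \cite{yh00} --- and your argument is correct and is essentially the standard atomic-method proof: submodularity of $X\mapsto|\partial X|$ (valid since $1\in S$), exclusion of $|\partial(A\cap F)|=\kappa_k(S)$ by minimality of the atom, and then using that $F^*$ is a $k$--fragment of $S^{-1}$ together with $\alpha_k(S)\le\alpha_k(S^{-1})$ to force $A\cup F$ to be $k$--separating, yielding the contradiction. The only remark worth adding is that you (rightly) read the paper's hypothesis ``$\alpha_2(S)\leq\alpha_2(S^{-1})$'' as $\alpha_k(S)\leq\alpha_k(S^{-1})$; this is what the statement for general $k$ requires and is clearly the intended reading, the paper only ever invoking the result for $k\le 2$.
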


The intersection
Theorem~\ref{thm:intAF} together with the fact that left translates of an atom are
atoms implies that either $1$--atoms containing $1$ of $S$ or $1$--atoms
containing $1$ of $S^{-1}$ are subgroups.

\begin{theorem}[\cite{yh84}]\label{thm:1atom}
Let $S$ be a $1$--separable generating set of a group $G$ with $1\in
S$ and $\alpha_1 (S)\le \alpha_1 (S^{-1})$. The  atom of $S$ containing $1$ is a subgroup
\end{theorem}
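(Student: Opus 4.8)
The plan is to prove that the $1$-atom $A$ of $S$ containing $1$ is a subgroup by exploiting the intersection property for atoms (Theorem~\ref{thm:int}) together with translation-invariance. The key observation is that, because left translates of a $1$-atom are again $1$-atoms of $S$, for any $a\in A$ the set $a^{-1}A$ is a $1$-atom of $S$ that contains $1$ (since $1\in A$ forces $a\in A$, hence $a^{-1}\cdot a = 1 \in a^{-1}A$; more directly $a\in A$ gives $1\in a^{-1}A$). Thus both $A$ and $a^{-1}A$ are $1$-atoms containing the common element $1$, so their intersection is nonempty.

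First I would verify that the intersection hypothesis of Theorem~\ref{thm:int} is available for $k=1$. Since $\alpha_1(S)\le\alpha_1(S^{-1})$ and $\kappa_1(S)=\kappa_1(S^{-1})$, one gets $|G|\ge |A| + |A| + \kappa_1(S) \ge 2\alpha_1(S)+\kappa_1(S)$ from the defining inequalities $|A|,|A^{*}|\ge 1$ and the relation $|G| = |A| + |\partial A| + |A^{*}| \ge \alpha_1(S) + \kappa_1(S) + \alpha_1(S^{-1})$; here the bound $\alpha_1(S)\le\alpha_1(S^{-1})$ is exactly what turns this into the required $2\alpha_1(S)+\kappa_1(S)\le|G|$, so the intersection property applies to the $1$-atoms of $S$. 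Consequently any two \emph{distinct} $1$-atoms of $S$ meet in at most $k-1=0$ elements, i.e.\ they are disjoint.

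Next I would run the standard argument: for $a\in A$, the translate $a^{-1}A$ is a $1$-atom of $S$ containing $1$, and $A$ is also a $1$-atom containing $1$. Since they share the element $1$, they cannot be disjoint, so by the intersection property they cannot be distinct; hence $a^{-1}A = A$ for every $a\in A$. This closure under the operations $x\mapsto a^{-1}x$ for all $a\in A$, combined with $1\in A$, shows $A$ is closed under inversion and multiplication: from $a^{-1}A=A$ and $1\in A$ we get $a^{-1}\in A$ for all $a\in A$, and from $a^{-1}A=A$ applied to any $b\in A$ we get $a^{-1}b\in A$, which for arbitrary $a,b\in A$ yields that $A$ is a subgroup.

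The main obstacle, and the only place requiring genuine care, is the verification that the cardinality hypothesis $|G|\ge 2\alpha_1(S)+\kappa_1(S)$ of Theorem~\ref{thm:int} genuinely holds under the assumption $\alpha_1(S)\le\alpha_1(S^{-1})$; this is precisely the remark recorded after Theorem~\ref{thm:int} (noting $\alpha_1(S^{-1})=|(A)^{*}|$ for the $1$-atom $A$, since $A^{*}$ is a $1$-fragment of $S^{-1}$ of size $\alpha_1(S^{-1})$), and it is the reason the hypothesis $\alpha_1(S)\le\alpha_1(S^{-1})$ cannot be dropped. Once this is in place the remainder is the routine translation-and-intersection argument above.
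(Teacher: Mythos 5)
Your proof is correct and matches the approach the paper itself indicates: the paper cites this theorem from Hamidoune (1984) without reproving it, but the remark preceding it --- the intersection property (Theorem~\ref{thm:int}/\ref{thm:intAF}) combined with the fact that left translates of atoms are atoms, so that two $1$-atoms sharing the element $1$ must coincide --- is exactly your argument, including the verification that $\alpha_1(S)\le\alpha_1(S^{-1})$ forces $|G|=|A|+|\partial A|+|A^{*}|\ge 2\alpha_1(S)+\kappa_1(S)$. One cosmetic point: in your final paragraph you assert $\alpha_1(S^{-1})=|A^{*}|$, whereas what is justified (and all that is needed) is $|A^{*}|\ge\alpha_1(S^{-1})$, since $A^{*}$ is a $1$-fragment of $S^{-1}$ but not necessarily a $1$-atom.
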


Note that without loss of generality we can add to the hypothesis
of Theorem~\ref{thm:main} that $S$ contains $1$ (if not replace $S$
by a right translate of $S$) and that $S$ generates $G$ (if $S$ is not
$1$-separable in the subgroup that it generates, then part $(ii)$ of 
Theorem~\ref{thm:main} holds trivially).
In that case the hypothesis of Theorem~\ref{thm:main} translates to:
$\kappa_2(S)\leq |S|-1$.

In the abelian case it was proved in \cite{yh00} (see also
\cite{hsz08}) that $2$-atoms that
are not subgroups have cardinality at most $\kappa_2(S)-|S|+3$. This
implies in particular that if $\kappa_2(S)\leq |S|-1$ then $2$-atoms are
either subgroups or of cardinality $2$. In turn this gives that under
the hypothesis \eqref{eq:-1} either $S$ is an arithmetic progression
($2$-atoms are of cardinality $2$) or $2$-atoms are subgroups which
yields \eqref{eq:H-1}.

In the general, non abelian case, it was obtained in \cite{am97} in
the special case of simple groups and normal sets $S$
that the cardinality of $2$-atoms is at most $\kappa_2(S)-|S|+3$.
In the next section we prove in all generality for all finite groups,
that if $|G|\ge 2\alpha_2(S)+\kappa_2 (S)$, then either
$\kappa_2(S)-|S|+3$ or $2$-atoms $A$ of $S$ are {\em left-periodic}, meaning
that there exists group elements $x\neq 1$ such that $xA=A$.
Note once more that if the condition 
$|G|\ge 2\alpha_2(S)+\kappa_2(S)$
does not hold for $S$, then it necessarily holds for $S^{-1}$.

\section{Nonperiodic $2$--atoms}\label{sec:nonperiodic}

In what follows $G$ is a finite group and $S$ is a $2$--separable
generating set of $G$ with $1\in S$ and $|S|\ge 3$. 
We moreover assume that $|G|\ge 2\alpha_2(S)+\kappa_2 (S)$.
The purpose of this
section is to prove  that a $2$--atom $U$ of $S$ which is not
left--periodic has cardinality at most  $|U|\le \kappa_2 (S)-|S|+3$, 
Proposition \ref{prop:m+3} below.

\begin{lemma}\label{lem:rightper} Let $A$ be a $2$--atom of $S$ and
  let $H\subset G$ be its maximal left period, i.e. the maximal
  subgroup such that $HA=A$. Then,
\begin{equation}\label{eq:intperr}
|A\cap Ag|\le |H|\; \mbox{ for all }\; g\in G\setminus \{ 1\}.
\end{equation}
In other words, $A\cap Ag$ contains at most a single coset. In
particular, if $g\in H\setminus \{ 1\}$ then 
$A\cap Ag\subseteq H$.
\end{lemma}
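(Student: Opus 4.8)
The plan is to reduce the statement to a purely combinatorial assertion about two distinct right cosets and then exploit the one feature of atoms that behaves well under the group action: left translation. First I would record the structural consequence of $HA=A$. Since $A$ is stable under left multiplication by $H$, so are $Ag$ and hence $A\cap Ag$; thus each of these sets is a union of right cosets $Hc$, and in particular $|A\cap Ag|$ is a multiple of $|H|$. Consequently the inequality $|A\cap Ag|\le |H|$ is equivalent to the statement that $A\cap Ag$ contains \emph{at most one} right coset of $H$, which justifies the reformulation ``$A\cap Ag$ contains at most a single coset.'' I would then argue by contradiction, assuming $A\cap Ag$ contains two elements $c,c'$ lying in distinct right cosets, i.e.\ $c'\notin Hc$.

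The heart of the proof is to manufacture, from this right-translate overlap, a genuine overlap between $A$ and a left translate of $A$. Set $x=c'c^{-1}$; then $x\notin H$ precisely because $c'\notin Hc$. I claim that $c'$ and $c'g^{-1}$ are two distinct elements of $A\cap xA$. They lie in $A$ because $c'\in A\cap Ag$ forces both $c'\in A$ and $c'g^{-1}\in A$. They lie in $xA$ because $c'=xc$ with $c\in A$, and $c'g^{-1}=x(cg^{-1})$ with $cg^{-1}\in A$ (again from $c\in A\cap Ag$); note that the \emph{same} element $x$ works for both witnesses, since $x(cg^{-1})=c'c^{-1}cg^{-1}=c'g^{-1}$ automatically. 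Finally they are distinct because $g\neq 1$. Hence $|A\cap xA|\ge 2$.

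To conclude, I would invoke the intersection property. Since $x\notin H$ and $H$ is the \emph{maximal} left period, we have $xA\neq A$; being a left translate of the $2$-atom $A$, the set $xA$ is itself a $2$-atom, distinct from $A$. The running hypothesis $|G|\ge 2\alpha_2(S)+\kappa_2(S)$ is exactly what Theorem~\ref{thm:int} requires (with $k=2$), giving $|A\cap xA|\le k-1=1$, which contradicts $|A\cap xA|\ge 2$. This establishes $|A\cap Ag|\le|H|$. For the final ``in particular'' clause I would use that we may take $1\in A$ (as we only ever study the atom containing $1$, translating if necessary), so that $H=H\cdot 1\subseteq HA=A$; for $g\in H\setminus\{1\}$ this gives $H=Hg\subseteq Ag$ as well, hence $H\subseteq A\cap Ag$, and the bound just proved forces $A\cap Ag=H$, so in particular $A\cap Ag\subseteq H$. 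The one genuine obstacle is the left/right asymmetry of the theory: right translates $Ag$ are \emph{not} atoms and carry no boundary control, so the intersection property cannot be applied to them directly; the device $x=c'c^{-1}$ is exactly what transfers the right-translate overlap into a left-translate overlap, and it is the only point where the nonabelian subtlety enters.
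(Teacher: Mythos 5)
Your proof is correct, and it takes a genuinely different and shorter route than the paper's. The paper proves the bound by a two-case analysis: for $g\in H\setminus\{1\}$ it works with the coset decomposition $A=Ha_1\cup\cdots\cup Ha_t$ (with $a_1=1$) and applies the intersection property to the translates $a_iA$ to get $A\cap Ag=H$; for $g\notin H$ it decomposes $A$ into maximal right $g$-progressions and applies the intersection property to $gA$ and to $w_i^{-1}A$ for the progression leaders $w_i$, forcing the leading progression to have length at most $2$ and all progressions starting outside $H$ to be singletons, whence $A\cap Ag\subseteq Hg$. Your transfer device $x=c'c^{-1}$, which converts two distinct $H$-cosets inside $A\cap Ag$ into the two distinct elements $c',c'g^{-1}$ of $A\cap xA$, compresses all of this into a single application of Theorem~\ref{thm:int} with no case distinction; the supporting facts are exactly those the paper also uses (left translates of $2$-atoms are $2$-atoms, maximality of $H$ gives $xA\neq A$ for $x\notin H$, and the standing hypothesis $|G|\ge 2\alpha_2(S)+\kappa_2(S)$ of Section~\ref{sec:nonperiodic} makes Theorem~\ref{thm:int} applicable). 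What the paper's longer argument buys is a little extra precision: it pins down the surviving coset ($A\cap Ag=H$ for $g\in H\setminus\{1\}$, and $A\cap Ag\subseteq Hg$ for $g\notin H$); but only the coset-count bound is used later (in Lemma~\ref{lem:sidon}), and your derivation of the ``in particular'' clause from $H\subseteq A\cap Ag$ together with the bound recovers $A\cap Ag=H$ anyway. One remark: your reduction to $1\in A$ for that final clause is the same normalization the paper makes silently by taking $a_1=1$, and it is genuinely needed, since that clause, unlike the main inequality, is not invariant under left translation (translating $A$ replaces $H$ by a conjugate); flagging the translation explicitly, as you do, is therefore appropriate.
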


\begin{proof}  The statement trivially holds if $A=H$. Suppose that
$$
A=Ha_1\cup\cdots \cup Ha_t,
$$
is the union of $t\ge 2$ different cosets of $H$, where $a_1=1$. Since
$A\neq gA$ for each $g\not\in H$, 
the intersection property of $A$ now says
\begin{equation}\label{eq:intper}
|A\cap gA|\le 1 \mbox{ for all } g\in G\setminus H.
\end{equation}

Consider first that $g\in H\setminus \{ 1\}$. By the intersection
property \eqref{eq:intper} of $A$  we have $|A\cap a_iA|\le 1$ for
each $i>1$. 
According to the decomposition  of $A$ into right cosets of $H$, this means
\begin{equation}\label{eq:lemk2s:4}
a_iH\cap Ha_i=\{ a_i\} \mbox{ and } a_iH\cap Ha_j=\emptyset, \; \mbox{ for } j\neq i.
\end{equation}
which implies
$$
Ha_ig=Ha_j \; \mbox{ if and only if } a_i=a_j=a_1,
$$
because $a_iga_j^{-1}\in a_iHa_j^{-1}\cap H$ and $g\neq 1$. 
It follows that $A\cap Ag=H$.

Suppose now that $g\in G\setminus H$.  Let $A=A_1\cup \ldots \cup
A_s$ be the decomposition of $A$ into maximal right
$g$--progressions, 
$A_i=\{w_i, w_ig, \ldots , w_ig^{\ell_i-1}\}$. We may assume $|A_1|\ge |A_i|$ for each $i$ and $w_1=1$.  By the intersection property of $A$ we have
$$
|A_1\cap A_1g|=|A_1\cap gA_1|\le |A\cap gA|\le 1,
$$
which implies $|A_1|\le 2$.   For each $A_i$ such that $w_i\in G\setminus H$ we have
$$
|A_1\cap  w_i^{-1}A_i|\le |A\cap w_i^{-1}A|\le 1,
$$
which implies $|A_i|=1$. Hence $(A\setminus H)g \cap A=\emptyset$ and
thus $A\cap Ag\subseteq Hg$. This completes the proof.
\end{proof}

\begin{corollary}\label{cor:left} 
Let $A$ be a  $2$-atom of $S$. If $A$ is not left--periodic then, 
for each $g\in G\setminus \{1\}$ we have
$$
\max\{ |A\cap Ag|, |A\cap gA|\}\le 1.
$$
\end{corollary}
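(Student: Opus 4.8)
The plan is to derive both inequalities almost immediately from the results already assembled, while noting that the two translates behave quite differently. Since $A$ is assumed not left-periodic, its maximal left period $H$ (in the sense of Lemma~\ref{lem:rightper}) is the trivial subgroup $\{1\}$, so $|H|=1$. The two halves of the conclusion then come from two different sources.

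For the right translate, I would simply invoke Lemma~\ref{lem:rightper} directly: it asserts $|A\cap Ag|\le|H|$ for every $g\in G\setminus\{1\}$, and with $|H|=1$ this reads $|A\cap Ag|\le 1$. This half is exactly where the real effort sits, because $Ag$ is in general \emph{not} a $2$-atom of $S$ — right translates of atoms are atoms only of the translated set $Sg$, not of $S$ — so the plain intersection property cannot be applied to the pair $A, Ag$. The careful coset-by-coset argument of Lemma~\ref{lem:rightper} is precisely what circumvents this difficulty.

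For the left translate, the situation is easier, since $gA$ \emph{is} a $2$-atom of $S$ for every $g$, left translates of atoms being atoms. Because $A$ is not left-periodic, $gA=A$ forces $g\in H=\{1\}$, so for $g\neq 1$ the sets $A$ and $gA$ are two distinct $2$-atoms. Under the standing hypothesis of this section, $|G|\ge 2\alpha_2(S)+\kappa_2(S)$, so the intersection property for atoms (Theorem~\ref{thm:int}) applies with $k=2$ and gives $|A\cap gA|\le k-1=1$; this is exactly inequality~\eqref{eq:intper} appearing in the proof of Lemma~\ref{lem:rightper}.

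Combining the two bounds yields $\max\{|A\cap Ag|,|A\cap gA|\}\le 1$ for all $g\neq 1$, as required. There is essentially no obstacle left at this stage: the only subtlety is the asymmetry just highlighted, namely that the left-translate estimate is a direct instance of atom intersection whereas the right-translate estimate genuinely requires Lemma~\ref{lem:rightper}. In other words, the main difficulty of the corollary has already been fully absorbed into the proof of that lemma.
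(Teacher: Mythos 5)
Your proposal is correct and follows exactly the paper's own proof: the bound $|A\cap gA|\le 1$ comes from the intersection property (Theorem~\ref{thm:int}) applied to the distinct $2$-atoms $A$ and $gA$, and the bound $|A\cap Ag|\le 1$ is Lemma~\ref{lem:rightper} with trivial maximal left period $H=\{1\}$. Your added remarks on why the right-translate case cannot be handled by the intersection property alone are accurate and simply make explicit what the paper leaves implicit.
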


\begin{proof}
  That $|A\cap gA|\leq 1$ is the intersection property. That
  $|A\cap Ag|\leq 1$ is Lemma~\ref{lem:rightper} since if $A$ is not
  left--periodic then its maximal period is $H=\{1\}$.
\end{proof}

\begin{lemma}\label{lem:induc} 
Let $A$ be  a $k$--atom of $S$, $k\le 2$. If $A$ is  not
left--periodic then $|A|\le \min\{2,|S|-1\}$.
\end{lemma}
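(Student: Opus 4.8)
The plan is to prove the bound by induction on $k$, which explains the name of the lemma: the base case $k=1$ is self-contained, and the case $k=2$ is where the work lies. In both cases I may assume $1\in A$, since left translates of an atom are atoms and, if $H$ is a left period of $A$, then $gHg^{-1}$ is a left period of $gA$; thus non-left-periodicity is preserved under translation and can be arranged together with $1\in A$. Because the standing hypothesis gives $|S|\ge 3$, the target reads simply $|A|\le 2$.

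For $k=1$ the intersection Theorem~\ref{thm:int} applies: its hypothesis $|G|\ge 2\alpha_1(S)+\kappa_1(S)$ follows from the standing assumption $|G|\ge 2\alpha_2(S)+\kappa_2(S)$ since $\alpha_1\le\alpha_2$ and $\kappa_1\le\kappa_2$. Hence two distinct $1$-atoms meet in at most $k-1=0$ elements, i.e. are disjoint. Every translate $gA$ with $g\ne 1$ is again a $1$-atom and, $A$ being non-left-periodic, is distinct from $A$, so $A\cap gA=\emptyset$ for all $g\ne 1$. If $|A|\ge 2$, choosing distinct $x,y\in A$ and $g=xy^{-1}\ne 1$ puts $x\in A\cap gA$, a contradiction; so $|A|=1$.

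For $k=2$ I argue by contradiction, assuming $|A|\ge 3$, and combine two ingredients. First, minimality of the atom: for each $a\in A$ the set $A\setminus\{a\}$ has cardinality $\ge 2$ and, since $|(A\setminus\{a\})S|\le|AS|=\alpha_2(S)+\kappa_2(S)$ while $|G|\ge 2\alpha_2(S)+\kappa_2(S)$, its complement $(A\setminus\{a\})^{*}$ has size $\ge\alpha_2(S)\ge 2$. Thus $A\setminus\{a\}$ is $2$-separable but, being smaller than a $2$-atom, is not a $2$-fragment, whence $|\partial(A\setminus\{a\})|\ge\kappa_2(S)+1$. As one always has $\partial(A\setminus\{a\})\subseteq\partial A\cup\{a\}$, equality holds and $(A\setminus\{a\})S=AS$. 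In particular $a\in(A\setminus\{a\})S$, so every $a\in A$ has a proper in-neighbour: there exist $a'\in A$, $a'\ne a$, with $s_a:=a'^{-1}a\in S\setminus\{1\}$. Second, Corollary~\ref{cor:left}: if two vertices $a\ne b$ shared a label $s_a=s_b=s$, the associated $a',b'$ would be two distinct elements of $A\cap As^{-1}$, contradicting $|A\cap As^{-1}|\le 1$. Hence $a\mapsto s_a$ injects $A$ into $S\setminus\{1\}$ and $|A|\le|S|-1$, which already settles the case $|S|=3$.

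The main obstacle is to sharpen $|A|\le|S|-1$ to $|A|\le 2$ when $|S|\ge 4$. I expect the one-sided counts to be insufficient on their own: every estimate built from the right-translate bound of Corollary~\ref{cor:left} together with the equalities $(A\setminus\{a\})S=AS$ collapses to exactly $|S|-1$ (the lower bound $|AS|\ge|A||S|-\binom{|S|}{2}$ from near-disjoint right translates, the upper bound $|AS|\le|A||S|/2$ from every point of $AS$ having at least two preimages, and the injectivity of the labels all yield the same inequality). The extra leverage must come from using Corollary~\ref{cor:left} two-sidedly: the $|A|$ translates $a^{-1}A$ $(a\in A)$ are pairwise distinct $2$-atoms all containing $1$ and, by the intersection property, meet pairwise only in $\{1\}$, which rigidly constrains the geometry of $A$; alternatively the inductive hypothesis at level $k-1$ should enter here. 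Reconciling the resulting left- and right-side conditions so as to force the in-neighbour structure on $A$ to collapse to at most two vertices is the crux of the argument.
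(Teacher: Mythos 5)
Your $k=2$ argument is, step for step, the paper's own proof. The paper assumes $|A|>2$, notes that every $a\in A$ satisfies $as^{-1}\in A$ for some $s\in S\setminus\{1\}$ (else $A\setminus\{a\}$ would be a $2$-fragment smaller than the atom), and concludes that the resulting map $a\mapsto s_a$ is an injection of $A$ into $S\setminus\{1\}$ by Corollary~\ref{cor:left}. It stops there. So the paper establishes exactly what you established: if $A$ is not left-periodic and $|A|>2$ then $|A|\le |S|-1$, i.e.\ $|A|\le\max\{2,|S|-1\}$.

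The ``crux'' you could not resolve --- sharpening $|S|-1$ to $2$ when $|S|\ge 4$ --- is therefore not a gap in your argument relative to the paper: it reflects a misprint in the statement, where $\min$ should read $\max$ (equivalently, the conclusion should be ``either $|A|\le 2$ or $|A|\le|S|-1$''). Three independent confirmations: (1) the paper's proof, as just described, proves only the $\max$ version; (2) the lemma is invoked exactly once, in the proof of Lemma~\ref{lem:sidon}, where it is applied to a $2$-atom $B$ of $A$ satisfying $|B|\ge 3$ and $|B|\ge|A|+1$ (inequality \eqref{eq:lemk2s:2}), and there only the bound $|B|\le|A|-1$ is needed for the contradiction; (3) the $\min$ version would assert that non-left-periodic $2$-atoms of cardinality at least $3$ do not exist at all, which would render Proposition~\ref{prop:m+3} --- the stated goal of Section~\ref{sec:nonperiodic}, which bounds the cardinality of precisely such atoms by $\kappa_2(S)-|S|+3$ --- vacuous. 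The collapse to cardinality $2$ only occurs later, in the proof of the main theorem, when the extra hypothesis $\kappa_2(S)=|S|-1$ is fed into Proposition~\ref{prop:m+3}. So the two-sided refinement you were hunting for is neither needed nor, at this level of generality, true to expect.

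Two minor points. Your separate $k=1$ case is correct and gives the stronger conclusion $|A|=1$; the paper's single argument in fact covers $k\le 2$ uniformly, since the intersection bounds of Corollary~\ref{cor:left} hold a fortiori for $1$-atoms. Also, you invoke $\alpha_1(S)\le\alpha_2(S)$ as if it were among the listed preliminaries; it is not, though it is true and needs only a line: if $\kappa_1(S)=\kappa_2(S)$ then any $2$-atom is a $1$-fragment, while if $\kappa_1(S)<\kappa_2(S)$ then every $1$-fragment $X$ has $|X|=1$ or $|X^*|=1$, and in the latter case $X^*=\{y\}$ gives $yS^{-1}\setminus\{y\}\subseteq\partial X$, forcing $\kappa_1(S)=|S|-1$ and making singletons $1$-atoms; either way $\alpha_1(S)\le\alpha_2(S)$.
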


\begin{proof} Suppose that $|A|>2$. 
Note that, for every element $a\in A$ there is $s\in S\setminus \{1\}$
such that $as^{-1}\in A$, 
since otherwise $|(A\setminus \{a\})S|-|A\setminus \{a\}|=|AS|-|A|$ 
contradicting the minimality of the $k$--atom.

We may therefore define a map $f:A\to S\setminus \{1\}$ which assigns to each $a$ an
$s\neq 1$ such that $as^{-1}\in A$. This map is injective
otherwise $|A\cap As^{-1}|\geq 2$, contradicting Corrollary~\ref{cor:left}.
\end{proof}

Our last preliminary step shows that an aperiodic  $2$--atom with more than two elements (actually any set satisfying the intersection property) has a large $2$--connectivity.

\begin{lemma}\label{lem:sidon} 
Let $A$ be  a $2$--atom of $S$ with $1\in A$ and $|A|\ge 3$. 
If $A$ is  not left--periodic then $A$ is $2$--separable, 
and $\kappa_2 (A)=2|A|-3$. In particular, $\kappa_1 (A)=|A|-1$.
\end{lemma}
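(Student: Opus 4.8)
The plan is to use only the two-sided ``Sidon'' property of $A$ supplied by Corollary~\ref{cor:left}: $|A\cap gA|\le 1$ and $|A\cap Ag|\le 1$ for every $g\neq 1$, equivalently that the left quotients $a(a')^{-1}$ (and the right quotients $(a')^{-1}a$) of distinct elements of $A$ are pairwise distinct. Regarding $A$ itself as the connecting set, I would first dispose of $2$-separability and of the upper bounds by explicit test sets. Because $1\in A$ one always has $X\subseteq XA$, so $|\partial_A X|=|XA|-|X|$. Taking $X=\{1\}$ gives $|\partial_A\{1\}|=|A|-1$, hence $\kappa_1(A)\le|A|-1$; taking $X=\{1,a\}$ with $a\in A\setminus\{1\}$ gives $|XA|=|A\cup aA|=2|A|-|A\cap aA|=2|A|-1$ (the intersection is exactly $1$, since $a=a\cdot 1\in A\cap aA$ and the Sidon bound forces equality), so $|\partial_A X|=2|A|-3$ and $\kappa_2(A)\le 2|A|-3$. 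The standing hypothesis $|G|\ge 2\alpha_2(S)+\kappa_2(S)\ge 2|A|+1$ yields $|X^{*A}|\ge 2$ for this $X$, so $A$ is $2$-separable, and one checks $A$ generates $G$ (otherwise $\langle A\rangle$ would be a proper subset with empty $A$-boundary, making $\kappa_2(A)=0$).

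The heart of the matter is the matching lower bound $|XA|\ge|X|+2|A|-3$ for valid $2$-sets, which I would prove on a $2$-atom $B$ of $A^{\varepsilon}$, choosing $\varepsilon\in\{1,-1\}$ (both $A$ and $A^{-1}$ are Sidon and contain $1$) so that $\alpha_2(A^{\varepsilon})\le\alpha_2(A^{-\varepsilon})$; this makes Theorems~\ref{thm:int} and~\ref{thm:intAF} available and guarantees $|G|\ge 2|B|+\kappa_2(A)$. The key estimate is a double count of the representations $y=xa$ with $x\in B$, $a\in A^{\varepsilon}$: a collision $xa=x'a'$ with $x\neq x'$ forces $x^{-1}x'=a(a')^{-1}$, and by the Sidon property each ordered pair $(x,x')$ admits at most one such $(a,a')$. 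Writing $n_y=|\{(x,a):xa=y\}|$, this gives $\sum_y n_y(n_y-1)\le|B|(|B|-1)$, and since $\binom{j}{2}\ge j-1$ for every integer $j\ge 1$ one obtains
$$|BA^{\varepsilon}|\ \ge\ \sum_y n_y-\tfrac12\sum_y n_y(n_y-1)\ \ge\ |B|\,|A|-\tfrac12|B|(|B|-1).$$
A short computation shows the right-hand side is $\ge|B|+2|A|-3$ precisely when $2\le|B|\le 2|A|-3$, settling $\kappa_2(A)=2|A|-3$ in that range.

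It remains to exclude $|B|\ge 2|A|-2$. Here Lemma~\ref{lem:induc}, applied with $A^{\varepsilon}$ as connecting set, is decisive: a $2$-atom that is \emph{not} left-periodic has size at most $\min(2,|A|-1)=2$, so a large atom $B$ must be left-periodic, with maximal left period a non-trivial subgroup $H$, whence $B$, $BA^{\varepsilon}$ and $\partial_A B$ are all unions of right $H$-cosets. If $B=H$ is a single coset, the Sidon count $\sum_c|A^{\varepsilon}\cap Hc|\bigl(|A^{\varepsilon}\cap Hc|-1\bigr)\le|H|-1$ bounds the number $\rho$ of right $H$-cosets meeting $A^{\varepsilon}$ below by $|A|-\tfrac12(|H|-1)$, and then $\kappa_2(A)=(\rho-1)|H|\ge 2|A|-3$ (the cases $|H|\le 2|A|-3$ and $|H|\ge 2|A|-2$ are checked separately, the latter needing only $\rho\ge 2$). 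The genuine obstacle is a periodic atom that is a union of $t\ge 2$ cosets: I would pass to the transitive $G$-set $H\backslash G$ of right cosets, on which $A^{\varepsilon}$ acts on the right, so that $\kappa_2(A)=|H|\,\bigl(|\overline{B}A^{\varepsilon}|-t\bigr)$ with $\overline{B}$ the image of $B$. The two-sided Sidon property now bounds the number of common out-neighbours of two distinct cosets by $|H|$, and the counting argument must be redone in $H\backslash G$ (where $\overline{B}$ plays the role of a $2$-atom) to recover $|\overline{B}A^{\varepsilon}|-t\ge(2|A|-3)/|H|$. This coset-space estimate --- essentially an inductive application of the same isoperimetric bound in a quotient $G$-set --- is the step I expect to be the most delicate.

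Finally, $\kappa_1(A)=|A|-1$ follows ``for free''. The bound $\kappa_1(A)\le|A|-1$ is already established; were it strict, Theorem~\ref{thm:1atom} would make the $1$-atom $K$ of $A^{\varepsilon}$ a subgroup with $|\partial_A K|=\kappa_1(A)<|A|-1$. If $|K|=1$ then $\kappa_1(A)=|A|-1$, a contradiction; and if $|K|\ge 2$ then $|K^{*A}|\ge|K|\ge 2$, so $K$ is a valid $2$-set and $\kappa_2(A)\le|\partial_A K|=\kappa_1(A)<2|A|-3$, contradicting the value of $\kappa_2(A)$ just proved. Hence $\kappa_1(A)=|A|-1$, as claimed.
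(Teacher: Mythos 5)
Your counting inequality $|BA^{\varepsilon}|\ge |B|\,|A|-\binom{|B|}{2}$ is correct and is a genuinely different (and cleaner) way to handle the range $2\le|B|\le 2|A|-3$ than the paper's telescoping use of the intersection property, and your treatment of the case $B=H$ a subgroup is fine. But the crucial case --- a left-periodic $2$-atom $B$ that is a union of $t\ge 2$ right $H$-cosets --- is exactly the one you leave as a sketch, and the repair you propose does not work. First, the graph on $H\backslash G$ with arcs $Hx\to Hxa$, $a\in A^{\varepsilon}$, is \emph{not} vertex-transitive: the right translation $Hx\mapsto Hxg$ does not preserve arcs, since $g^{-1}ag\in A^{\varepsilon}$ fails in general (only $N_G(H)$ acts by automorphisms, via left multiplication). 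So no atom machinery is available in the quotient, and there is no established sense in which $\overline{B}$ ``plays the role of a $2$-atom''. Second, redoing the collision count in the quotient degenerates precisely in the regime where you need it: within-coset collisions contribute up to $t(|H|-1)$ ordered pairs and cross-coset collisions up to $t(t-1)|H|$, so the count only yields $|\overline{B}A^{\varepsilon}|-t\ \ge\ t\bigl(|A|-\tfrac{|B|+1}{2}\bigr)$, which is at most $t/2$ once $|B|\ge 2|A|-2$ --- useless against the target $(2|A|-3)/|H|$. This case genuinely requires the structural analysis the paper carries out: $t\ge 3$ is excluded by $|BA|\ge 3|B|-3|H|\ge 2|B|$ (using Lemma~\ref{lem:rightper} applied to $B$); $t=2$ then forces $|BA|=3|H|$; next $|A|\ge 4$ is impossible because the four translates $B,Ba,Ba',Ba''$ would be four \emph{distinct} pairs of cosets chosen among only three cosets; and the residual case $|A|=3$, $|H|=2$ is eliminated by an explicit computation inside a group of order $6$. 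Nothing in your proposal substitutes for this.

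There is a second, independent gap at the start: your proof of $2$-separability assumes $A$ generates $G$, and the justification (``otherwise $\subgp{A}$ has empty boundary, making $\kappa_2(A)=0$'') is not valid. By the paper's conventions $\kappa_2(A)$ is computed in the Cayley graph of the group $\subgp{A}$ generated by $A$, and the lemma is needed --- and is actually invoked, in the second half of the proof of Proposition~\ref{prop:m+3} --- precisely when $\subgp{A}$ is a \emph{proper} subgroup of $G$. In that situation the standing hypothesis $|G|\ge 2\alpha_2(S)+\kappa_2(S)$ tells you nothing about $|\subgp{A}|$, so your test set $X=\{1,a\}$ is not known to satisfy $|X^{*A}|\ge 2$. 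The paper instead proves $|\subgp{A}|\ge 3|A|-3$ from the intersection property, deduces $|\subgp{A}|-|XA|\ge |A|-2$, and disposes of the single exceptional configuration $|A|=3$, $|\subgp{A}|=6$ by observing that the six left translates of $A$ would form six triples on six points pairwise meeting in at most one point, which is impossible. You need this (or an equivalent) argument before any of the isoperimetric quantities attached to $A$ make sense.
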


\begin{proof} 
Let $K$ be the subgroup generated by $A$. 
Since $|A|\ge 3$ there are two distinct elements $a,a'\in K\setminus
\{1\}$. 
Thus, by the intersection property of $A$,
$$
|K|\ge |\{1,a,a'\}A|\ge |A|+|aA\setminus A|+|a'A\setminus (A\cup aA)|\ge 3|A|-3.
$$
Similarly, we have   $|XA|-|X|\ge 2|A|-3$ for each subset $X\subset K$ with cardinality~$2$. Moreover there is equality if $1\in X$ and $X\subset A$. By choosing such an $X$ we have
$$|K|-|XA|\ge (3|A|-3)-(2|A|-1)=|A|-2.$$
Therefore $A$ is $2$--separable unless $|A|=3$ and $|K|=6$. In this
case, however, the six left translates of $A$ form a system of triples
on a set of six points, every two of which intersect in at most one
point. 
Such a structure does not exist. Hence $A$ is $2$--separable and 
$\kappa_2(A)\le |AX|-|X|=2|A|-3$. We next show that $\kappa_2(A)=2|A|-3$.

Suppose on the contrary that $\kappa_2 (A)\le 2|A|-4$. Let $B$ be a $2$--atom of
$A$ containing~$1$. 
Since we have $\kappa_2 (A)=\kappa_2 (A^{-1})$, we may assume that 
$\alpha_2 (A)\le \alpha_2 (A^{-1})$, otherwise proceed by replacing
$A$ by $A^{-1}$ after noticing that $A^{-1}$ must also satisfy 
Corollary~\ref{cor:left}. We have $|B|\ge 3$ and
\begin{equation}\label{eq:lemk2s:1}
3|A|-3\le |BA|\le 2|A|+|B|-4,
\end{equation}
where the leftmost inequality uses the intersection property of $A$ and the rightmost one the fact that $B$ is a $2$--atom of $A$. This implies
\begin{equation}\label{eq:lemk2s:2}
|A|+1\le |B|,
\end{equation}
which improves \eqref{eq:lemk2s:1} to
\begin{equation}\label{eq:lemk2s:1b}
|A|(|A|+1)/2\le |BA|\le 2|A|+|B|-4,
\end{equation}
and \eqref{eq:lemk2s:2} to
\begin{equation}\label{eq:lemk2s:2b}
|A|(|A|-3)/2+4\le |B|.
\end{equation}
Moreover,  by Lemma \ref{lem:induc} applied to $B$ and $A$, the
$2$--atom $B$ is left--periodic.
Let $H\subset K$ be the stabilizer of $B$ by left translations. The subgroup $H$ is nontrivial and  $B$ is a union of right--cosets of $H$.

Suppose that $B=H$. Since $A$ generates $K$ there is an $a\in
A\setminus H$ and therefore
\begin{equation}\label{eq:lemk2s:3}
|BA|\ge |B\cup Ba|=2|B|.
\end{equation}
By inserting this inequality in the left hand side of
\eqref{eq:lemk2s:1} 
we get $|B|\le 2|A|-4$, contradicting \eqref{eq:lemk2s:2b}. 
Therefore we may assume that
$$
B=Hb_1\cup\cdots \cup Hb_t,
$$
is the union of $t\ge 2$ different cosets of $H$, where $b_1=1$.

Suppose that $t\ge 3$. Let $1,a,a'$ be three elements in $A$. 
By \eqref{eq:intperr}, we have
$$
|BA|\ge |B\cdot\{ 1,a,a'\}|\ge 3|B|-3|H|\ge 2|B|,
$$
which, by the same reasoning as in  \eqref{eq:lemk2s:3}, leads to a contradiction. Thus $t=2$. 
It follows that $|BA|=3|H|$ since otherwise we again have $|BA|\ge
2|B|$. 

Suppose that $|A|\geq 4$. Let $1,a,a',a''$ be four points in $A$. 
The four right translates $B, Ba, Ba', Ba''$ must each consist of
two right cosets of $H$ chosen among the three right $H$-cosets of
$BA$. But  by \eqref{eq:intperr} they must be distinct: this is not
possible, so we are left with the case $|A| = 3$.

In this case \eqref{eq:lemk2s:1} implies $|H|= |BA|-|B|\le 2|A|-4=2$ and $|BA|=6$. 

Let us set $H=\{1,h\}$, $B=H\cup Hb$, $BA=H\cup Hb\cup Bc$ and $A=\{1,a,a'\}$.
By \eqref{eq:intperr} we have, without loss of generality,
\begin{align}
  \label{eq:Ba}
  Ba &= H\cup Hc\\
  Ba'&= Hb\cup Hc\label{eq:Ba'}
\end{align}
The intersection property applied to $A$ implies that $a\not\in H$,
and $Ha\neq Ha'$,
hence \eqref{eq:Ba} implies $Ha=Hc$ and $Hba=H$ i.e. $Hb=Ha^{-1}$,
and \eqref{eq:Ba'} implies $Ha'=Ha^{-1}$ from which we have
$a'=ha^{-1}$
otherwise $A$ is a geometric progression and cannot satisfy the
intersection property. Equality \eqref{eq:Ba'} also implies
$Ha^{-1}a'=Ha$ from which we get $a'=aha$ since we cannot have
$a'=a^2$ because $A$ would again be a geometric progression.
From this and $a'=ha^{-1}$  we get 
$a^{-1}=haha$ from which $a^3=1$ and $ah=ha$. 
Therefore $B$ and $A$ together generate a
group of order $6$ inside which the $6$ left translates of $A$ must be
distinct and intersect in at most $1$ element. This is not possible.
\end{proof}

We are now ready to prove our upper bound on the size of aperiodic $2$--atoms.

\begin{proposition}\label{prop:m+3} Let $A$ be a $2$--atom of $S$ with
  $|A|\ge 3$. 
If $A$ is not left--periodic then $|A|\le \kappa_2 (S)-|S|+3$.
\end{proposition}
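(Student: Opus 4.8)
The plan is to bound the size of a non-left-periodic $2$-atom $A$ by relating its external boundary structure to $\kappa_2(S)$ through the $2$-connectivity of $A$ itself, established in Lemma~\ref{lem:sidon}. The key objects are $A$, with $|A|\ge 3$ and $A$ aperiodic, and the fragment $A^{*S} = G\setminus(A\cup\partial_S A)$, which is a $2$-fragment of $S^{-1}$. I would first recall that, since $A$ is a $2$-atom, $|\partial_S A| = \kappa_2(S)$ and $|AS| = |A| + \kappa_2(S)$; the goal $|A|\le \kappa_2(S)-|S|+3$ is equivalent to showing $|AS| \ge 2|A| + |S| - 3$, i.e.\ that the product set $AS$ is nearly as large as the trivial lower bound one would get if $A$ behaved like a Sidon-type set.

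**Next I would** exploit Lemma~\ref{lem:sidon}, which gives $\kappa_2(A) = 2|A|-3$ and $\kappa_1(A)=|A|-1$. The strategy is to view the roles of $A$ and $S$ as interchanged: instead of studying how $S$ expands $A$, study how $A$ expands a suitable subset built from $S$. Concretely, I would take $X = A$ as a generating set and consider a well-chosen subset $Y$ of $G$ related to $S$ (for instance $Y = S\setminus\{1\}$ or a translate thereof) and apply the isoperimetric inequality $|YA\setminus Y|\ge \kappa_1(A)=|A|-1$ or $\kappa_2(A)=2|A|-3$, depending on whether $|Y|$ is at least $2$ and the complement is large enough. The aim is to extract a lower bound on $|AS|$ of the form $|AS|\ge |S| + (\text{something}\ge 2|A|-3)$, which rearranges to the desired inequality. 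The subtlety is that $\kappa_2(A)$ controls expansion only for sets $Y$ that are genuinely $2$-separable with respect to $A$, so I must verify that $S$ (or the appropriate derived set) has both itself and its $A$-complement of size at least $2$; this uses $|S|\ge 3$ together with $|AS|\le |G|-2$ coming from the standing hypothesis $|G|\ge 2\alpha_2(S)+\kappa_2(S)$.

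**The main obstacle** I expect is handling the boundary cases where the $2$-separability of $S$ relative to $A$ fails, forcing a drop from $\kappa_2(A)=2|A|-3$ to the weaker $\kappa_1(A)=|A|-1$, which would not by itself yield the sharp constant $+3$. To resolve this I would argue that if only the weaker bound is available, then $S$ is covered by very few translates under $A$, and a direct counting argument on the cosets or progressions—of exactly the flavour used in the proof of Lemma~\ref{lem:sidon}—recovers the missing terms. An alternative and possibly cleaner route is to combine the injectivity map $f:A\to S\setminus\{1\}$ from Lemma~\ref{lem:induc} (which already shows $|A|\le |S|-1$ in the extreme case) with a double-counting of pairs $(a,s)$ such that $as\in \partial_S A$, using Corollary~\ref{cor:left} to guarantee that the translates $As$ for distinct $s\in S$ overlap pairwise in at most one point; this near-disjointness of the translates $\{As : s\in S\}$ is precisely what forces $|AS|$ to be large, and I would expect the final bookkeeping of these overlaps to produce exactly the constant $3$ in $\kappa_2(S)-|S|+3$.
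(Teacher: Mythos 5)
Your opening reduction (the target $|A|\le\kappa_2(S)-|S|+3$ is equivalent to $|AS|\ge 2|A|+|S|-3$) and your primary plan---reverse the roles of $A$ and $S$, invoke Lemma~\ref{lem:sidon} to get $\kappa_2(A)=2|A|-3$, and use a set derived from $S$ as a fragment witnessing the connectivity of $A$---is exactly the paper's argument for its main case, up to one orientation slip that you would need to repair: with the convention $\partial_A Y=YA\setminus Y$, your choice $Y=S\setminus\{1\}$ (or a translate) bounds $|SA|-|Y|$, not $|AS|-|S|$, and in a nonabelian group these differ. The correct move is to take $Y=S^{-1}$ as a witness for $A^{-1}$, writing $|AS|-|S|=|S^{-1}A^{-1}|-|S^{-1}|\ge\kappa_2(A^{-1})=\kappa_2(A)=2|A|-3$; your verification of admissibility ($|S|\ge 3$ together with $|G|-|AS|\ge\alpha_2(S)\ge 3$) is correct and is the same as the paper's.

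There is, however, a genuine gap: this witness argument only makes sense when $A$ generates $G$. The constants $\kappa_1(A)$, $\kappa_2(A)$ of Lemma~\ref{lem:sidon} live inside $K=\langle A\rangle$, and nothing in the hypotheses prevents $K$ from being a proper subgroup; in that case $S\not\subseteq K$, so neither $S$ nor $S^{-1}$ (nor any single translate) is a subset of the Cayley graph of $A$, and your inequality is not even well posed. This is not a marginal boundary case: it is where the paper spends most of its proof. One decomposes $S=S_1\cup\dots\cup S_k$ into its nonempty intersections with right cosets of $K$, applies $\kappa_1(A)=|A|-1$ coset by coset (or, in one subcase, $\kappa_2(A)$ to a piece $S_i$ translated back into $K$) to get $|AS|\ge|S|+t\bigl(|A|-1\bigr)$, where $t$ is the number of indices with $|AS_i|\le|K|-1$, and then rules out $t=0$ and $t=1$ by comparing $A$, as a $2$-atom of $S$ in $G$, against the competing fragments $K$ itself and two-element sets $\{1,x\}\subseteq K$, together with the intersection property. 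These exclusions use the global minimality of $A$ among $2$-fragments of $S$; they are not ``of the flavour of Lemma~\ref{lem:sidon}'', which only uses intrinsic properties of $A$. Your sketch neither identifies $\langle A\rangle\ne G$ as the problematic case nor supplies these steps, and your fallback heuristic (``$S$ is covered by very few translates under $A$'') does not follow from anything. Finally, your proposed alternative route fails outright: from $|As\cap As'|\le 1$ the guaranteed new contribution of the $j$-th translate is only $\max\bigl(0,|A|-(j-1)\bigr)$, so this bookkeeping saturates at roughly $|A|^2/2$ and produces no term growing with $|S|$, whereas the target $2|A|+|S|-3$ does grow with $|S|$. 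The detour through $\kappa_2(A)$---i.e., applying near-disjointness only to the handful of translates needed inside Lemma~\ref{lem:sidon}, and then letting a single set derived from $S$ act as witness---is what makes the constant $3$ attainable.
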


\begin{proof} Set $m=\kappa_2 (S)-|S|$. We may assume that $1\in A$.
Moreover $\kappa_2 (S)\ge |S|-1$ since otherwise $\kappa_1
(S)=\kappa_2 (S)<|S|-1$ 
and $A$ is a subgroup. By Lemma \ref{lem:sidon}, $A$ is $2$--separable and $\kappa_2 (A^{-1})=\kappa_2 (A)=2|A|-3$.

Suppose first that $A$ generates the same group $G$ as $S$. Then, $S$ is a witness that $\kappa_2 (A^{-1})\le |A|+m$, since $|S^{-1}A^{-1}|-|S^{-1}|=|AS|-|S|\le |A|+m$. By combining the two inequalities for $\kappa_2 (A)$ we get $|A|\le m+3$ as claimed.

Suppose now that $A$ generates a proper subgroup $H$ of $G$. 
Let $S=S_1\cup \cdots \cup S_k$ be the right--decomposition of $S$ modulo $H$, namely, each $S_i$ is the nonempty intersection of $S$ with a right--coset of $H$.

If there is an $S_i$ such that $|S_i|\ge 2$ and $|AS_i|\le |H|-2$, say $i=1$, then
$$
|A|+|S|+m=|AS|=\sum_i |AS_i|\ge |AS_1|+|S\setminus S_1|,
$$
so that $\kappa_2 (A)\le |A|+m$ and the above argument applies.

Suppose that  $|S_i|=1$ or $|AS_i|\ge |H|-1$ for each $i$.   Let $t$ be the number of $i$'s such that  $|AS_i|\le  |H|-1$. By Lemma \ref{lem:sidon}, we have $\kappa_1 (A)=|A|-1$, which gives
\begin{equation}\label{eq:m+3a}
|A|+|S|+m=|AS|=\sum_i |AS_i|\ge |S|+t(|A|-1),
\end{equation}
We cannot have $t=0$ since otherwise $AS=HS$ and $|HS|-|H|<|AS|-|A|$, contradicting that $A$ is a $2$--atom. If $t=1$, then
either $|S_1|=1$ or $|AS_1|=|H|-1$. 
In the first case, by taking $X=\{1,x\}\subset H$ we have
$|XS|-|X|\leq |AS|-|A|$, contradicting again that $A$ is a $2$--atom. 
In the second case we have $|AS|=|HS|-1$ and, since $|AS|-|A|\le |HS|-|H|$, we get $|A|\ge |H|-1$, which is incompatible with the intersection property of $A$. Hence $t\ge 2$ and \eqref{eq:m+3a} gives $|A|\le m+2$.
\end{proof}

\section{Periodic $2$--atoms}\label{sec:periodic}

Throughout the section $G$ is a finite group and $S$ is a
$2$--separable generating set of $G$ with $1\in S$ and $|S|\ge 3$. 

In this section we show that, if the $2$--atom of $S$ is left--periodic and $\kappa_2 (S)\le |S|-1$, then either there is a subgroup which is a $2$--fragment or the $2$--atom is the union of at most two cosets of a subgroup.  This fact is already stated in the preprint of Hamidoune \cite[Theorem 8.1]{yh09} when $\kappa_2 (S)=|S|-1$, deduced from a more general result in the setting of vertex transitive gaphs. We trace back the argument and give a direct proof which is slightly simpler.

If   $\kappa_1 (S)<|S|-1$ then $\kappa_2 (S)=\kappa_1 (S)$ and, by Theorem \ref{thm:1atom}, the atom of $S$ containing $1$ is a subgroup. We thus may assume that $\kappa_2 (S)=\kappa_1 (S)=|S|-1$.

Following Hamidoune we shall use the following diagram which is useful in the coming arguments. Let ${\cal F}=\{F_i,\; i\in I\}$ be a collection of $2$--fragments of $S$.  Each $F_i$ induces the partition $\{F_i,\partial F_i, G\setminus F_iS\}$ of $G$, where $F_i^*=G\setminus F_iS$ is a $2$--fragment of $S^{-1}$ and $\partial^{-1} F_i^*=\partial F_i$. For every pair $F_i, F_j\in {\cal F}$ we consider the common refinement of  these partitions and use the notation
$$
\beta_{ij}=|F_i\cap \partial  F_j| \mbox{  and } \beta'_{ij}=|\partial F_i\cap F_j^*|,\; i\neq j,
$$
and
$$
\gamma_{ij}=\gamma_{ji}=|\partial F_i\cap \partial F_j|,
$$
which is illustrated in Figure \ref{fig:diag}.

\begin{figure}[htbp]
\begin{center}
\setlength{\unitlength}{5mm}
\begin{picture}(12,10)
\put(2,1){\line(1,0){8}}
\put(2,4){\line(1,0){8}}
\put(2,6){\line(1,0){8}}
\put(2,9){\line(1,0){8}}

\put(2,1){\line(0,1){8}}
\put(5,1){\line(0,1){8}}
\put(7,1){\line(0,1){8}}
\put(10,1){\line(0,1){8}}

\put(2,0){$F_i$}
\put(4.5,-0.5){{\small $\begin{array}{ll}\partial F_i=\\\partial^{-1}F_i^*\end{array}$}}
\put(9,0){$F_i^*$}

\put(10.5,2){$F_j$}
\put(10.5,4.5){{\small $\partial F_j=\partial^{-1}F_j^*$}}
\put(10.5,7){$F_j^*$}

\put(3,4.5){$\beta_{ij}$}
\put(5.5,7){$\beta_{ij}'$}

\put(5.5,4.5){$\gamma_{ij}$}

\put(5.5,2){$\beta_{ji}$}
\put(8,4.5){$\beta_{ji}'$}
\end{picture}
\caption{The partition induced by  $(F_i, \partial F_i, F_i^*)$ and $(F_j, \partial F_j, F_j^*)$.}
\label{fig:diag}
\end{center}
\end{figure}
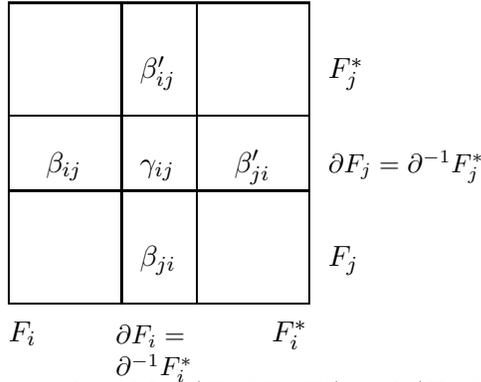

We start with a technical lemma which is a simple variant of the intersection
property in the case when $\kappa_2(S)=\kappa_1(S)$.

\begin{lemma}\label{lem:beta}
  Suppose $\kappa_2(S)=\kappa_1(S)$. Let $F_1$ and $F_2$ be two
  $2$-fragments of $S$. Suppose $F_1\cap F_2\neq\emptyset$.
  Then with the notation of Figure~\ref{fig:diag} we have
  $$\beta_{12}\geq \beta_{12}'.$$
  Suppose furthermore that $F_1^*\cap F_2^*\neq\emptyset$. Then
  $\beta_{12}=\beta_{12}'$, $\beta_{21}=\beta_{21}'$ and
  \begin{enumerate}[(i)]
  \item either $F_1\cap F_2$ is a $2$-fragment of $S$,
  \item or $|F_1\cap F_2|= 1$.
  \end{enumerate}
\end{lemma}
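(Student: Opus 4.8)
The plan is to exploit the submodularity of the boundary operator together with the hypothesis $\kappa_2(S)=\kappa_1(S)$. The key structural fact is that for two $2$-fragments $F_1,F_2$ with $F_1\cap F_2\neq\emptyset$, both $F_1\cap F_2$ and $F_1\cup F_2$ are candidate sets, and the standard submodular inequality $|\partial(F_1\cap F_2)|+|\partial(F_1\cup F_2)|\leq |\partial F_1|+|\partial F_2|$ must be compared against the definition of $\kappa_k$. First I would translate the quantities $\beta_{12},\beta_{12}',\gamma_{12}$ from Figure~\ref{fig:diag} into expressions for the boundaries of $F_1\cap F_2$ and $F_1\cup F_2$. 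Reading off the diagram, $\partial(F_1\cap F_2)$ is contained in the union of the pieces adjacent to $F_1\cap F_2$, and a careful bookkeeping should yield $|\partial(F_1\cap F_2)|\leq \kappa_2(S)-\beta_{12}+\beta_{12}'$ (and symmetrically for the union), so that the inequality $\beta_{12}\geq\beta_{12}'$ is exactly what prevents $F_1\cap F_2$ from having boundary smaller than $\kappa_1(S)=\kappa_2(S)$.

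For the first inequality $\beta_{12}\ge\beta_{12}'$, I would argue that since $F_1\cap F_2\neq\emptyset$, this set is nonempty and hence (having size $\geq 1$) is an eligible set for the $1$-isoperimetric problem; its boundary must therefore be at least $\kappa_1(S)=\kappa_2(S)$. Combining this lower bound with the upper bound obtained from the diagram decomposition forces $\beta_{12}-\beta_{12}'\geq 0$. The point is that $F_1$ being a fragment means $|\partial F_1|=\kappa_2(S)$ exactly, so there is no slack, and the only way to reconcile $|\partial(F_1\cap F_2)|\geq\kappa_2(S)$ with the diagram count is the asserted inequality.

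For the second part, I would add the symmetric hypothesis $F_1^*\cap F_2^*\neq\emptyset$, which guarantees that $F_1\cup F_2$ is not all of $G$ and in fact that $(F_1\cup F_2)^*\neq\emptyset$, making $F_1\cup F_2$ eligible as well (with complement of size $\geq 1$). Running the same argument on $F_1\cup F_2$ gives the reverse inequality $\beta_{12}\leq\beta_{12}'$, whence equality $\beta_{12}=\beta_{12}'$; the symmetric roles of the two fragments give $\beta_{21}=\beta_{21}'$ too. Once both intersection and union are eligible and both achieve boundary exactly $\kappa_2(S)$, the set $F_1\cap F_2$ is either a genuine $2$-fragment (when $|F_1\cap F_2|\geq 2$, since it attains the minimum boundary) or has cardinality $1$ (the only remaining case compatible with $F_1\cap F_2\neq\emptyset$ but $|F_1\cap F_2|<2$). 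This dichotomy is precisely alternatives $(i)$ and $(ii)$.

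The main obstacle I anticipate is the careful combinatorial bookkeeping in the diagram: one must verify precisely which of the nine cells of the common refinement contribute to $\partial(F_1\cap F_2)$ and which to $\partial(F_1\cup F_2)$, keeping track of the asymmetry between $\beta_{ij}$ (inside-boundary overlaps) and $\beta_{ij}'$ (boundary-outside overlaps). The subtlety is that the boundary operator is only a right-multiplication by $S$, so $\partial(F_1\cap F_2)$ need not equal the naive intersection of boundaries, and one must use the fact that $\partial(X\cap Y)\subseteq(\partial X\cap Y)\cup(X\cap\partial Y)\cup(\partial X\cap\partial Y)$ carefully to avoid over- or under-counting the $\gamma_{12}$ cell. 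Getting these containments exactly right, and confirming that the hypothesis $\kappa_2(S)=\kappa_1(S)$ is what rules out the loss incurred when passing from $2$-fragments to the possibly-small set $F_1\cap F_2$, is the crux of the argument.
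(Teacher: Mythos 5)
Your proposal is correct, and its first half is exactly the paper's argument: bound $|\partial(F_1\cap F_2)|$ above by the cells $\beta_{12}+\gamma_{12}+\beta_{21}$ of the diagram (using $(F_1\cap F_2)S\subseteq F_1S\cap F_2S$), bound it below by $\kappa_1(S)=\kappa_2(S)=|\partial F_1|=\beta'_{12}+\gamma_{12}+\beta_{21}$, and compare. (One slip: your intermediate formula should read $|\partial(F_1\cap F_2)|\leq\kappa_2(S)+\beta_{12}-\beta'_{12}$, not $\kappa_2(S)-\beta_{12}+\beta'_{12}$; as written it would give the reverse inequality, but your stated conclusion matches the corrected bookkeeping.) In the second half you diverge mildly from the paper: you run the argument on the union $F_1\cup F_2$, whose boundary equals exactly $\gamma_{12}+\beta'_{12}+\beta'_{21}$ and which is eligible for the isoperimetric lower bound precisely because $(F_1\cup F_2)^*=F_1^*\cap F_2^*\neq\emptyset$; the paper instead treats $F_1^*\cap F_2^*$ as an intersection of $2$-fragments of $S^{-1}$ (via $\partial_{S^{-1}}F_i^*=\partial F_i$) and sums the two inequality chains to force equality throughout. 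These are dual formulations of the same submodularity computation and yield numerically identical bounds, but your version has two small advantages: it never invokes $S^{-1}$ or the identity $\kappa_k(S)=\kappa_k(S^{-1})$, and it rests on the exact equality $(F_1\cup F_2)S=F_1S\cup F_2S$ where the paper's step uses only a containment. Both routes then close identically: the equalities force $|\partial(F_1\cap F_2)|=\kappa_2(S)$, and since $(F_1\cap F_2)^*\supseteq F_1^*$ has at least two elements, $F_1\cap F_2$ is a $2$-fragment unless it is a singleton --- a point worth making explicit, since eligibility of $F_1\cap F_2$ (and of the union) requires the nonemptiness of the starred complements, which you leave implicit.
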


\begin{proof}
  We have 
  \begin{equation}
    \label{eq:partial}
\beta_{12}+\gamma_{12}+\beta_{21}\ge |\partial (F_1\cap F_2)|\ge \kappa_1=\kappa_2=\beta'_{12}+\gamma_{12}+\beta_{21},
  \end{equation}
  which implies $\beta_{12}\geq \beta_{12}'.$
  If both $F_1\cap F_2$ and $F_1^*\cap F_2^*$ are non empty, then
  summing \eqref{eq:partial} together with
  $$
  \beta_{12}'+\gamma_{12}+\beta_{21}'\ge |\partial (F_1^*\cap F_2^*)|\ge \kappa_1=\beta_{12}+\gamma_{12}+\beta_{21}'
  $$
  we obtain that all inequalities must in fact be equalities, meaning
  that
  $$|\partial (F_1\cap F_2)| = |\partial (F_1^*\cap F_2^*)| = \kappa_1
  = \kappa_2
  $$
  which implies the result.
 \end{proof}

We consider two cases according to whether or not 
$|G|<2\alpha_2(S^\varepsilon)+\kappa_2(S^\varepsilon)$
holds for $\varepsilon=1$ or $\varepsilon=-1$.

\begin{lemma}\label{lem:k2=k1} 
Suppose $\kappa_2(S)=\kappa_1(S)$ and either 
$|G|<2\alpha_2(S)+\kappa_2(S)$ or
$|G|<2\alpha_2(S^{-1})+\kappa_2(S^{-1})$.
Then there is a $2$--fragment of $S$ or a $2$--fragment of $S^{-1}$ 
which is a subgroup.
\end{lemma}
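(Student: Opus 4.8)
The plan is to spend the imbalance hypothesis in order to move all of Hamidoune's intersection machinery onto the \emph{smaller} of the two sides, and then to run a left--translation argument on a $2$--atom there.

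First I would reduce by symmetry. Since $\kappa_1(S)=\kappa_1(S^{-1})$ and $\kappa_2(S)=\kappa_2(S^{-1})$, the hypothesis is invariant under $S\leftrightarrow S^{-1}$, so I may assume $|G|<2\alpha_2(S)+\kappa_2(S)$. If $A$ is a $2$--atom of $S$ with $1\in A$, then $A^*$ is a $2$--fragment of $S^{-1}$ with $|A^*|=|G|-\alpha_2(S)-\kappa_2(S)<\alpha_2(S)$; as $|A^*|\ge\alpha_2(S^{-1})$ this forces $\alpha_2(S^{-1})<\alpha_2(S)$. By the remarks following Theorems~\ref{thm:int} and~\ref{thm:intAF}, the companion inequality $|G|\ge 2\alpha_2(S^{-1})+\kappa_2(S^{-1})$ then holds, so both the intersection property for $2$--atoms of $S^{-1}$ and the fragment--atom intersection property for $S^{-1}$ are now available, and moreover $|G|\ge\alpha_2(S)+\alpha_2(S^{-1})+\kappa_2(S)$.

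Next comes the translation argument. Let $B$ be a $2$--atom of $S^{-1}$ with $1\in B$. For each $b\in B$ the translate $b^{-1}B$ is again a $2$--atom of $S^{-1}$ containing $1$, so applying Theorem~\ref{thm:intAF} to the atom $B$ and the fragment $b^{-1}B$ yields $b^{-1}B=B$ or $B\cap b^{-1}B=\{1\}$. If the first alternative holds for every $b\in B$, then the left stabilizer $P=\{g\in G:gB=B\}$ contains $B^{-1}$, hence contains $B$, while $P=P\cdot 1\subseteq PB=B$; thus $B=P$ is a subgroup, and, being a $2$--atom, it is the desired subgroup $2$--fragment of $S^{-1}$. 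This already settles the case $B=\{1,x\}$ with $x^2=1$.

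Otherwise some $b\neq 1$ satisfies $B\cap b^{-1}B=\{1\}$, and this is where the work lies. I would first invoke Proposition~\ref{prop:m+3} for $S^{-1}$ (legitimate since $|G|\ge 2\alpha_2(S^{-1})+\kappa_2(S^{-1})$): using the standing assumption $\kappa_2(S^{-1})=\kappa_1(S^{-1})=|S|-1$, a non--left--periodic $2$--atom of $S^{-1}$ has at most two elements, so either $|B|=2$ or $B$ is left--periodic with a nontrivial maximal period $H$, in which case $B$ is a union of right $H$--cosets and I am done once $B=H$. To eliminate the two surviving configurations --- $B=\{1,x\}$ with $x^2\neq1$, and $B$ a union of several $H$--cosets --- I would feed the overlapping pair $(B,b^{-1}B)$ into Lemma~\ref{lem:beta} (valid for $S^{-1}$ because $\kappa_2(S^{-1})=\kappa_1(S^{-1})$), whose equalities $\beta_{12}=\beta_{12}'$ and $\beta_{21}=\beta_{21}'$ would let me split off a strictly smaller $H$--periodic $2$--fragment and descend to a single coset. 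The main obstacle is exactly the activation of the second half of Lemma~\ref{lem:beta}, which needs $B^*\cap (b^{-1}B)^*\neq\emptyset$: a crude count guarantees this only when $\alpha_2(S)>\alpha_2(S^{-1})+\kappa_2(S)$, whereas the imbalance supplies merely $\alpha_2(S^{-1})<\alpha_2(S)$. Closing the boundary regime $\alpha_2(S^{-1})<\alpha_2(S)\le\alpha_2(S^{-1})+\kappa_2(S)$ --- presumably by summing the weaker inequality $\beta_{12}\ge\beta_{12}'$ of Lemma~\ref{lem:beta} over the whole family $\{b^{-1}B:b\in B\}$ rather than leaning on a single co--intersection --- is the step I expect to require genuine care.
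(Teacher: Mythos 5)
Your opening moves are correct: the symmetry reduction to $|G|<2\alpha_2(S)+\kappa_2(S)$, the deduction $\alpha_2(S^{-1})<\alpha_2(S)$ (hence $|G|\ge 2\alpha_2(S^{-1})+\kappa_2(S^{-1})$), and the translation dichotomy ``either $b^{-1}B=B$ for every $b\in B$, so $B$ is a subgroup, or $B\cap b^{-1}B=\{1\}$ for some $b\neq 1$'' are all sound. But the proof stops exactly where the lemma has to be proved, as you yourself acknowledge, and the route you sketch for closing it is misdirected. First, the obstacle you flag --- nonemptiness of $B^*\cap(b^{-1}B)^*$ --- is not the real one: since $B^*$ and $(b^{-1}B)^*$ are $2$-fragments of $S$, they satisfy $|B^*|=|(b^{-1}B)^*|\ge\alpha_2(S)>\alpha_2(S^{-1})=|B|$, and the paper's counting (decompose $|B|$ and $|(b^{-1}B)^*|$ along the partition induced by $B$, then use $\beta_{12}\ge\beta'_{12}$ from the first half of Lemma~\ref{lem:beta}) gives $|B^*\cap(b^{-1}B)^*|>|B\cap b^{-1}B|=1$ for free, with no crude union bound needed. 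The real problem comes after: once the second half of Lemma~\ref{lem:beta} is activated, its conclusion is merely ``$B\cap b^{-1}B$ is a $2$-fragment or has cardinality $1$,'' and cardinality $1$ is exactly the situation you are in, so no contradiction can be extracted. In the paper's proof the contradiction comes from \emph{atom minimality}: there the two intersecting sets are $2$-fragments whose stars are $2$-\emph{atoms} of the opposite side, so the nonempty proper $2$-separating subset $F_1^*\cap F_2^*$ of the atom $F_1^*$ must have boundary strictly larger than $\kappa_2$, contradicting $\beta_{12}\ge\beta'_{12}$. In your setup the roles are reversed: $B^*$ has cardinality $|G|-\alpha_2(S^{-1})-\kappa_2$, which in general strictly exceeds $\alpha_2(S)$, so $B^*$ is only a fragment, not an atom, of $S$; proper subsets of it of boundary $\kappa_2$ are therefore not excluded, and the minimality argument is unavailable.

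There is also a strategic gap, not just a technical one: your plan to ``eliminate the two surviving configurations'' amounts to proving that every $2$-atom of $S^{-1}$ containing $1$ is a subgroup. That statement is strictly stronger than the lemma, it is not what the paper proves, and nothing in the machinery forbids two distinct $2$-atoms of $S^{-1}$ from meeting in exactly one point --- that is precisely what the intersection property permits --- so no argument of this shape can rule it out. The paper applies your translation/disjointness idea to the \emph{co-atoms} instead: the $2$-fragments $F$ (of $S^{-1}$, in your labeling) whose stars $F^*$ are $2$-atoms of $S$. For these the hypothesis $|G|<2\alpha_2(S)+\kappa_2(S)$ yields exactly $|F|<|F^*|$, the counting above produces $|F_1^*\cap F_2^*|>|F_1\cap F_2|>0$ whenever $F_1\cap F_2\neq\emptyset$, and the minimality of the atom $F_1^*$ then forces the contradiction $\beta'_{12}>\beta_{12}$; hence distinct co-atoms are disjoint and the one containing $1$ is a subgroup. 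If you replace ``$2$-atom $B$ of $S^{-1}$'' by ``$2$-fragment $F$ of $S^{-1}$ such that $F^*$ is a $2$-atom of $S$'' throughout, your argument goes through essentially verbatim; as written, it does not close.
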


\begin{proof}  
Let us suppose $|G|<2\alpha_2(S^{-1})+\kappa_2(S^{-1})$, the case
$|G|<2\alpha_2(S)+\kappa_2(S)$ being similar.
Let $F_1, F_2$ be any $2$--fragments of $S$ such that $F_1^*$ and $F_2^*$ are $2$--atoms of $S^{-1}$. We shall show that $F_1\cap F_2=\emptyset$. It will follow that one such $2$--fragment containing $1$ satisfies $x^{-1}F=F$ for each $x\in F$ and thus $F$ is a subgroup, as claimed.

Suppose on the contrary that $F_1\cap F_2\neq\emptyset$. 
By Lemma~\ref{lem:beta} we have $\beta_{12}\ge \beta'_{12}$. 

Now the hypothesis $|G|<2\alpha_2(S^{-1})+\kappa_2(S^{-1})$ translates
into $|F^*_1|>|F_1|$ since $F^*_1$ is a $2$-atom of $S^{-1}$ (see the
remark after Theorem~\ref{thm:int}).

We have (see Figure~\ref{fig:diag}) 
$$|F_1|= |F_1\cap F_2| + \beta_{12} + |F_1\cap F_2^*|$$
and
$$|F_1^*| = |F_2^*| = |F_1^*\cap F_2^*| + \beta_{12}' + |F_1\cap
F_2^*|$$
from which $|F^*_1|>|F_1|$ implies, since $\beta_{12}\geq\beta_{12}'$,
$$|F^*_1\cap F^*_2|>|F_1\cap F_2|>0.$$
Since  $F_1^*$  is a $2$--atom of $S^{-1}$, it follows that
$$
\beta_{12}'+\gamma_{12}+\beta_{21}'\ge |\partial (F_1^*\cap F_2^*)|>\kappa_2 (S)=\beta_{12}+\gamma_{12}+\beta'_{12},
$$
which implies $\beta'_{12}>\beta_{12}$, a contradiction.
\end{proof}

According to the last lemma we can restrict ourselves to the case when we
simultaneously have
$|G|\ge 2\alpha_k(S)+\kappa_k(S)$ and $|G|\ge 2\alpha_k(S^{-1})+\kappa_k(S^{-1})$.

\begin{lemma}\label{lem:2coset} 
Assume that $\kappa_2 (S)=\kappa_1(S)=|S|-1$ and $\alpha_2 (S)>2$ and $\alpha_2 (S^{-1})>2$. 
If $|G|\ge 2\alpha_k(S)+\kappa_k(S)$ and $|G|\ge
2\alpha_k(S^{-1})+\kappa_k(S^{-1})$,
then there is a $2$--atom of $S$ or of $S^{-1}$ which is  the union of
at most two right cosets of $H$.
\end{lemma}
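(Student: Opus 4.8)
The plan is to work with a left-periodic $2$-atom and to leverage the structural machinery developed so far, most crucially the intersection property of Lemma~\ref{lem:beta}. Let $A$ be a $2$-atom of $S$ containing $1$ (which we may assume, since left-translates of atoms are atoms). If $A$ is not left-periodic, then by Proposition~\ref{prop:m+3} we have $|A|\le\kappa_2(S)-|S|+3 = 2$ under the hypothesis $\kappa_2(S)=|S|-1$, contradicting $\alpha_2(S)>2$; so $A$ must be left-periodic. Let $H$ be its maximal left period, so $H$ is a nontrivial subgroup and $A$ is a union of right cosets of $H$. If $A=H$ or $A$ is the union of exactly two cosets of $H$, we are done. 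So I would argue by contradiction, assuming $A$ is the union of $t\ge 3$ distinct right cosets of $H$, and derive a contradiction using the connectivity and intersection structure.

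The key step is to exploit the fragment intersection machinery in the quotient-like structure induced by $H$. Since both $|G|\ge 2\alpha_2(S)+\kappa_2(S)$ and the symmetric inequality for $S^{-1}$ hold, the full intersection Theorem~\ref{thm:int} is available on both sides, and Lemma~\ref{lem:beta} applies to any two intersecting $2$-fragments. The plan is to use left-translates $aA$ for $a$ ranging over a transversal of cosets to produce a family of $2$-atoms (hence $2$-fragments) and to apply Lemma~\ref{lem:beta} to well-chosen pairs $F_1,F_2$ from this family, so that the equalities $\beta_{12}=\beta_{12}'$ force $F_1\cap F_2$ to again be a $2$-fragment whenever $F_1^*\cap F_2^*\neq\emptyset$. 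The dichotomy ``$F_1\cap F_2$ is a $2$-fragment or $|F_1\cap F_2|=1$'' is the crucial lever: a $2$-fragment has cardinality at least $\alpha_2(S)>2$, so if the intersection of two translated atoms has size strictly between $1$ and $\alpha_2$, we immediately get a contradiction. Combined with Lemma~\ref{lem:rightper}, which bounds $|A\cap Ag|$ by $|H|$ and tells us $A\cap Ag$ contains at most one coset, this should pin down how translates of $A$ can overlap and force $t\le 2$.

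Concretely, I would pick $a\notin H$ with $Ha\subseteq A$ (possible since $t\ge 2$) and examine the $2$-fragment $a^{-1}A$ alongside $A$: since $1\in A\cap a^{-1}A$, these intersect, and by Lemma~\ref{lem:rightper} applied appropriately the intersection is controlled by single cosets of $H$. The aim is to show that when $t\ge 3$ the overlap $A\cap a^{-1}A$ is forced to have size at least $2|H|\ge 2$ yet strictly less than $\alpha_2(S)$, violating the dichotomy of Lemma~\ref{lem:beta} once we check $F_1^*\cap F_2^*\neq\emptyset$ (which follows from the counting identities $|F_1|=|F_1\cap F_2|+\beta_{12}+|F_1\cap F_2^*|$ together with $|F_1|=|F_1^*|$ coming from $\kappa_2=\kappa_1=|S|-1$ and the atom being a fragment of minimum size). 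Alternatively, the periodicity structure may allow one to pass to a genuine quotient graph on $H$-cosets and recognize $A/H$ as a non-left-periodic $2$-atom of the induced generating set, at which point Lemma~\ref{lem:induc} applied in the quotient forces its size down to at most $2$, giving $t\le 2$ directly.

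The hard part will be the bookkeeping needed to verify that the hypotheses of Lemma~\ref{lem:beta} are genuinely met for the translate-pairs I select, in particular confirming that $F_1^*\cap F_2^*\neq\emptyset$ rather than just $F_1\cap F_2\neq\emptyset$; this nonemptiness of the ``outer'' intersection is what upgrades $\beta_{12}\ge\beta_{12}'$ to the equality that drives the contradiction, and it rests delicately on the identity $|F_1|=|F_1^*|$ valid only because $\kappa_2(S)=\kappa_1(S)=|S|-1$. The second delicate point is ruling out the borderline $|F_1\cap F_2|=1$ branch: one must show that, for the specific translates chosen, the overlap is forced to contain a full coset of $H$ (hence has size $\ge|H|\ge 2$) via Lemma~\ref{lem:rightper}, thereby excluding the size-$1$ alternative and compelling the intersection to be a $2$-fragment smaller than $A$ itself, which contradicts $A$ being a $2$-atom. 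Managing the interaction between the left-periodicity (cosets of $H$) and the right-translation structure that the intersection property naturally controls is where the argument needs the most care.
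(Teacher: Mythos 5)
Your opening step is correct and matches the paper (non-left-periodicity would give $|A|\le \kappa_2(S)-|S|+3=2$ by Proposition~\ref{prop:m+3}, contradicting $\alpha_2(S)>2$), but the core of your plan cannot work. You propose to show that for $t\ge 3$ the overlap $A\cap a^{-1}A$ is ``forced to have size at least $2|H|\ge 2$''. This is impossible: since $a\notin H$ and $H$ is the maximal left period, $A$ and $a^{-1}A$ are \emph{distinct} $2$-atoms of $S$, and Theorem~\ref{thm:int} --- which is available here precisely because $|G|\ge 2\alpha_2(S)+\kappa_2(S)$ --- forces $|A\cap a^{-1}A|\le 1$, whatever $t$ is. You are conflating the two kinds of translates: the intersection property controls \emph{left} translates $A\cap gA$ (at most one element), while Lemma~\ref{lem:rightper} controls \emph{right} translates $A\cap Ag$ (at most one coset); neither can ever force a left-translate overlap to contain a full coset of $H$. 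Indeed, a left-periodic atom with $t\ge 3$ cosets is perfectly consistent with all the one-sided intersection constraints you invoke --- that configuration is exactly the setting in which the proof of Lemma~\ref{lem:rightper} operates, with no contradiction arising. A further error: the identity $|F_1|=|F_1^*|$ you rely on to get $F_1^*\cap F_2^*\neq\emptyset$ is not available; the hypothesis only yields the inequality $|A|\le |A^*|$ for a $2$-atom, which is what the paper uses.

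The missing idea is that the argument must be two-sided: the conclusion is deliberately disjunctive (a $2$-atom of $S$ \emph{or} of $S^{-1}$), and the paper's proof assumes for contradiction that a $2$-atom $A$ of $S$ \emph{and} a $2$-atom $Z$ of $S^{-1}$ each decompose into at least three cosets. It then takes three translates $A_1,A_2,A_3$ of $A$ pairwise meeting in $\{1\}$, three translates $Z_1,Z_2,Z_3$ of $Z$ whose unique common point $z$ is placed in $A_1^*\setminus A_2^*$, derives $|A_1S\cap A_2S|\le |S|$ and $|Z_1S^{-1}\cap Z_2S^{-1}|\le |S|$ from Lemma~\ref{lem:beta}, and gets the contradiction by applying the fragment--atom intersection property (Theorem~\ref{thm:intAF}, under the normalization $\alpha_2(S)\le\alpha_2(S^{-1})$) \emph{across} the two families, in a case analysis according to how many of the $Z_i$ lie inside $A_1^*$. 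Your proposal never brings $S^{-1}$ into play, so even with its internal steps repaired it would be attempting to prove the strictly stronger statement that the atom of $S$ itself has at most two cosets, which is not what the lemma asserts and is beyond what this method delivers. Your fallback quotient idea also fails as stated: $H$ need not be normal, the right cosets $H\backslash G$ do not form a group, so ``$A/H$ is a non-left-periodic $2$-atom of the induced generating set'' has no meaning; the paper's quotient graph $X/H$ of Section~\ref{sec:notsubgroups} is an arc-transitive digraph with its own arc-connectivity atom theory, and it is introduced only \emph{after} this lemma, to handle the surviving case $A=H\cup Ha$.
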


\begin{proof} 
Let $A$ be a $2$--atom of $S$ with $1\in A$.  Since $\alpha_2 (S)>2$,
by Proposition \ref{prop:m+3}, $A$ is left--periodic. Suppose that $A$
is the union of at least three cosets of a subgroup $H$. We may assume
$H$ to be the stabilizer of $A$ 
by left translations. Thus there are $x,y\in A$ such that $A_1=A,
A_2=x^{-1}A, 
A_3=y^{-1}A$ are pairwise distinct and contain $1$. By the intersection property, $A_1\cap A_2\cap A_3=\{ 1\}$.

Applying the first part of Lemma~\ref{lem:beta} with $F_i=A_i$ we
obtain that $|A_1\cap A_2|=1$ implies
\begin{equation}\label{eq:2coset1}
|A_1^*\setminus A_2^*|\le |A_1|-1.
\end{equation}
We observe that $A_1^*\setminus A_2^*\neq \emptyset$ since otherwise
$A_1^*=A_2^*$ implies $A_1=A_2$. 
Now the hypothesis $|G|\ge 2\alpha_k(S)+\kappa_k(S)$ translates to
$|A_1|\leq |A_1^*|$ so that \eqref{eq:2coset1} implies $A_1^*\cap
A_2^*\neq\emptyset$.
The second part of Lemma~\ref{lem:beta} therefore applies and
decomposing $A_1S\cap A_2S$ as
  $$A_1S\cap A_2S = (A_1\cap A_2) \cup (A_1\cap \partial A_2) \cup
  (A_2\cap \partial A_1) \cup (\partial A_1\cap \partial A_2)$$
we have 
\begin{equation}\label{eq:2coset2}
|A_1S\cap A_2S|=\kappa_1 +1\le |S|.
\end{equation}

Let $Z$ be a $2$--atom of
$S^{-1}$. By analogy to the previous case, we may assume that $Z$ is
left--periodic and the union of at least three cosets of its
stabilizer by left translations (otherwise we are done.) Thus there
are three distinct $2$--atoms $Z_1, Z_2, Z_3$ of $S^{-1}$ with an only
common point $z$. By translating $Z_1, Z_2, Z_3$ if need be, we impose $z\in A_1^*\setminus A_2^*$.

By exactly the same argument with $S^{-1}$ replacing
$S$ we deduce
\begin{equation}\label{eq:2coset2w}
|Z_1S^{-1}\cap Z_2S^{-1}|\le |S|.
\end{equation}

Let us now suppose, without loss of generality, that $\alpha_2(S)\leq \alpha_2(S^{-1})$.
If this is not the case, simply switch $S$ and $S^{-1}$.

Suppose first that $A_1^*$ contains $Z_1\cup Z_2$. This implies
$Z_1^*\cap Z_2^*\supset A_1$, in particular $1\in Z_1^*\cap Z_2^*$.
Therefore we also have $Z_i^*\cap A_2\neq \emptyset$ for $i=1,2$, so
that Lemma~\ref{lem:beta} implies that $Z_i\cap A_2^*$ is a
$2$-fragment of $S^{-1}$ or of cardinality $1$. But we have chosen $z\in Z_i$ and
$z\not\in A_2^*$, so $Z_i\cap A_2^*$ is strictly included in $Z_i$,
which means that $Z_i\cap A_2^*$ cannot be a $2$--fragment of
$S^{-1}$, because $Z_i$ is a $2$--atom.
Therefore $|Z_i\cap A_2^*|\le 1$ for $i=1,2$. But then, using
$|A_1|\leq |Z_1|$ because we have supposed $\alpha_2(S)\leq \alpha_2(S^{-1})$,
$$
|(Z_1\cup Z_2)\cap A_1^*\setminus A_2^*|\ge 2|Z_1|-3\ge 2|A_1|-3>|A_1|-1,
$$
contradicting \eqref{eq:2coset1}. 

We may thus suppose that $A_1^*$ contains at most one of the three
atoms $Z_1, Z_2, Z_3$. without loss of generality we may assume that
each of $Z_1$ and $Z_2$ are not contained in $A_1^*$. 

Now $Z_i\not\subset A_1^*$ implies $A_1\not\subset Z_i^*$. $A_1$ is a
$2$-atom of $S$, $Z_i$ is a $2$-fragment of $S$, and
since $\alpha_2 (S)\le \alpha_2 (S^{-1})$, the intersection property
of Theorem~\ref{thm:intAF} implies $|A_1\cap Z_i^*|\le 1$.

This implies that $A_1$ intersects $Z_1S^{-1}\cap Z_2S^{-1}$ in at least $|A_1|-2$ points. Moreover, since $z\in A_1^*$, we have $zS^{-1}\cap A_1=\emptyset$. It follows that, by using \eqref{eq:2coset2w},
$$
|S| \ge |Z_1S^{-1}\cap Z_2S^{-1}|
\ge |zS^{-1}|+|Z_1|-2\ge |S|+|A_1|-2,
$$
contradicting that $|A_1|=\alpha_2 (S)>2$. This completes the proof.
\end{proof}

\section{Periodic $2$--atoms which are not subgroups}
\label{sec:notsubgroups}

Throughout the section $G$ is a finite group and $S$ is a
$2$--separable generating set of $G$ with $1\in S$,
$|S|\ge 3$, and $\kappa_2(S)=\kappa_1(S)=|S|-1$.
We assume that no $2$-atom of $S$ or of $S^{-1}$ is a subgroup or has
cardinality $2$. By Lemmas~\ref{lem:k2=k1} and \ref{lem:2coset} either
$S$ or $S^{-1}$ has a $2$-atom 
which is the union of two right cosets of some subgroup $H$,
$$
A=H\cup Ha.
$$

Our main goal in this section is to prove the following result.

\begin{theorem}\label{thm:2coset}
  Let $S$ be such that $|G|\geq 2\alpha_2(S) +\kappa_2(S)$ and
  $\kappa_2(S)=\kappa_1(S)=|S|-1$. If $S$ does not have $2$--fragments that
  are subgroups and
  if $A=H\cup Ha$ is a $2$-atom of $S$, then $HS$ consists of the
  complement of exactly two right-cosets modulo $H$.
\end{theorem}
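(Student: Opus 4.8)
The plan is to split the statement into an easy size reduction and one genuine inequality, the latter being settled by an edge-connectivity argument on a transitive graph.

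First I would record the basic parameters. Since $A=H\cup Ha$ is a $2$-atom which is neither a subgroup nor of cardinality $2$, we have $|H|\ge 2$ and $|A|=2|H|$; as $A$, $AS$ and therefore $\partial A=AS\setminus A$ are unions of right $H$-cosets, $|H|$ divides $\kappa_2(S)=|S|-1$, so I set $\kappa_2(S)=m|H|$ with $m\ge 1$ and $n=[G:H]$. Then $|AS|=(m+2)|H|$ and $|A^*|=(n-m-2)|H|$, while the hypothesis $|G|\ge 2\alpha_2(S)+\kappa_2(S)$ becomes $n\ge m+4$; in particular $|A^*|\ge 2|H|>0$. Now $HS\neq G$, for $HS=G$ would give $AS=G$ and $A^*=\emptyset$. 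Hence $H^*=G\setminus HS$ is a nonempty union of cosets, so $|H^*|\ge|H|\ge 2$ and $H$ is $2$-separable. Since no $2$-fragment of $S$ is a subgroup, $H$ is not a $2$-fragment, whence $|\partial H|>\kappa_2(S)$; writing $|HS|=r|H|$ this reads $(r-1)|H|>m|H|$, i.e. $r\ge m+2$. But $H\subseteq A$ forces $HS\subseteq AS$ and $r\le m+2$, so $r=m+2$ and $HS=AS$. Consequently $HS$ omits exactly $n-m-2\ge 2$ right cosets and $A^*=H^*$, and the theorem is now equivalent to the single equality $|A^*|=|A|$, that is $n=m+4$, or again $|G|=2\alpha_2(S)+\kappa_2(S)$.

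To obtain $|A^*|=|A|$ I would pass to the coset graph announced in the introduction. The $n$ left translates $gA$ cover $G$; because $a\notin H$, every element of $G$ lies in exactly two of them; and because $|G|\ge 2\alpha_2(S)+\kappa_2(S)$, the intersection property (Theorem~\ref{thm:int}) gives $|gA\cap g'A|\le 1$ for distinct translates. Thus the incidence between elements and translates defines a simple graph $\Gamma$ whose vertices are the translates (equivalently the left cosets $gH$), whose edges are the elements of $G$, each joining the two translates containing it, and which is regular of degree $|A|=2|H|$. Left translation acts on $\Gamma$ as automorphisms, transitively on vertices and on edges, so $\Gamma$ is vertex- and edge-transitive; assuming $A$ generates $G$ (the contrary case is dealt with inside $\langle A\rangle$), $\Gamma$ is connected, and by the Mader--Watkins theorem a connected vertex-transitive graph has edge-connectivity equal to its degree, so here $\lambda(\Gamma)=2|H|$.

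The remaining and decisive step is to realise the fragment $A^*=H^*$ as one side of an edge cut of $\Gamma$ and to bound that cut. Splitting the translates according to how they meet the partition $G=A\cup\partial A\cup A^*$, the cross-edges are exactly the elements lying simultaneously in a translate that meets $A^*$ and in a translate contained in $AS$; their number should be controlled through the quantities $\beta_{ij},\beta_{ij}',\gamma_{ij}$ of Figure~\ref{fig:diag} by means of Lemma~\ref{lem:beta} and the fragment--atom intersection property (Theorem~\ref{thm:intAF}). The main obstacle is precisely this estimate: one must show that if $A^*$ meets three or more cosets, i.e. $n-m-2\ge 3$, then the resulting cut has strictly fewer than $2|H|$ edges, contradicting $\lambda(\Gamma)=2|H|$. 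Granting it, $n-m-2\le 2$, hence $n=m+4$ and $|A^*|=|A|$; by the first paragraph this is exactly the assertion that $HS$ is the complement of two right $H$-cosets. I expect the bookkeeping of this cut --- assigning each translate to a side and bounding the cross-edges by the intersection data --- to be the only delicate part, everything else being the counting already in place.
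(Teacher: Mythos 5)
Your first two paragraphs are essentially correct and parallel the paper's own setup: your derivation of $HS=AS$ (via the observation that $H$ would otherwise be an eligible $2$-separating set and hence a forbidden subgroup $2$-fragment) matches \eqref{eq:hs} and the remark following Theorem~\ref{thm:2coset}, your reformulation of the conclusion as $|G|=2\alpha_2(S)+\kappa_2(S)$ is right, and your graph $\Gamma$ is well defined: it is simple precisely because of the intersection property $|gA\cap g'A|\le 1$, and it is vertex- and edge-transitive of degree $2|H|$, much like the quotient graph of Lemma~\ref{ref:X/H}. (Your parenthetical dismissal of the case $\langle A\rangle\neq G$ is already a gap --- in the paper this reduction takes three Claims and a further Lemma, and uses the no-subgroup-fragment hypothesis essentially --- but it is minor compared with what follows.)

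The decisive step is missing, and the mechanism you propose for it cannot supply it. You need: if $G\setminus HS$ contains at least three $H$-cosets, then your cut (translates meeting $A^*$ versus translates contained in $AS$) has fewer than $2|H|$ edges. You give no argument, only the expectation that ``bookkeeping'' with Lemma~\ref{lem:beta} will do it; but the sole quantitative consequence of the hypotheses is $|HS\setminus S|=2|H|-1$, and this bounds only the arcs \emph{leaving} $HS$ in the \emph{directed} quotient graph (the paper's inequality \eqref{eq:2h-1}): an outgoing arc forces an element of $HS\setminus S$, injectively by \eqref{eq:conj}. An undirected cut such as yours also contains all edges corresponding to arcs entering $HS$ from the complement, and nothing in the hypotheses controls those; Mader's theorem itself says every such cut has at least $2|H|$ edges, so the upper bound you need cannot be of undirected-connectivity origin. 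This is exactly why the paper works with the arc-transitive digraph $X/H$ and with \emph{higher} arc-connectivities from Lemma~\ref{lem:karc}: the bound $\lambda_3(X/H)\ge 3|H|-6>2|H|-1$ disposes of $|H|\ge 6$, but for $|H|\le 5$ no connectivity bound suffices (for $|H|=2$ there exist arc-transitive digraphs with $\lambda_3=3\le 2|H|-1$), and the paper must invoke antisymmetry, the triangle/$K_4^*$/octahedron analysis, a separate $\lambda_4\ge 4$ argument, and finally the explicit computations of Propositions~\ref{prop:3cosets} and~\ref{prop:complement}, which exhibit a two-element set beating $A$ as a $2$-fragment. None of this is recoverable from a single application of Mader's theorem, so the core of the proof remains undone.
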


{\bf Remark.} The complement of $HS$ must contain at least two cosets
modulo $H$, otherwise we have $|G|=\alpha_2(S)+\kappa_2(S)+|H| <
2\alpha_2(S) +\kappa_2(S)$.

\subsection{Reduction to the case when $|H|<6$}

Our approach will be the following: {\em left} multiplication by $A$
defines a graph on the set of right-cosets $Hx$ modulo the subgroup
$H$. This graph (to be defined precisely below) is arc-transitive. Now
the study of arc-transitive graphs shows them to have generally high
connectivity. On the other hand, the condition
$\kappa_2(S)=\kappa_1(S)=|S|-1$ implies relatively low connectivity
for the arc-transitive graph, and we will obtain a contradiction for
all cases but the one mentioned in Theorem~\ref{thm:2coset}.

We first observe that $a^{-1}A\neq A$, since otherwise $A$ is
left--periodic by the subgroup generated by $H\cup \{ a\}$ and
therefore $A$ coincides with this subgroup. Hence, since $1\in A\cap
a^{-1}A$, we have, by the intersection property for $2$-atoms (Theorem~\ref{thm:int}):
\begin{equation}\label{eq:conj}
H\cap a^{-1}Ha=\{ 1\}.
\end{equation}

We also observe that
\begin{equation}\label{eq:hs}
HS=AS,
\end{equation}
since otherwise there is a full right--coset $Hx$ contained in  $AS\setminus HS$ and
$$
|HS|-|H|\le |AS|-|H|-|Hx|=|AS|-|A|,
$$
which contradicts $A$ being a $2$--atom.

Let $X=Cay(G,A)$ be the {\em left} Cayley graph of $A$   (arcs are
$x\to \alpha x,\; \alpha\in A$). We define the (right) quotient graph
$X/H$ which has  vertex set 
$\{ Hx, x\in G\}$, the right cosets of $H$, and there is an arc $Hx\to
Hy$ if and only if $HaHx \supset Hy$.

\begin{lemma}\label{ref:X/H} 
The graph $X/H$ is vertex--transitive and arc--transitive. 
In particular $X/H$ is regular and its degree is $|H|$.
\end{lemma}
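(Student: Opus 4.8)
The plan is to exhibit an explicit arc-transitive action on $X/H$ coming from right multiplication, and to pin down the degree through the size of the double coset $HaH$. First I would record that, by \eqref{eq:conj}, $H\cap a^{-1}Ha=\{1\}$, so the standard double-coset formula gives $|HaH|=|H|^2/|H\cap a^{-1}Ha|=|H|^2$. Hence $HaH$ is a disjoint union of exactly $|H|$ right cosets of $H$, and the map $h\mapsto Hah$ is a bijection from $H$ onto these cosets: indeed $Hah=Hah'$ forces $h'h^{-1}\in H\cap a^{-1}Ha=\{1\}$. Since $a\notin H$, the coset $Hx$ itself does not lie in $HaHx$, so the out-neighbours of a vertex $Hx$ in $X/H$ are precisely the $|H|$ cosets contained in $HaHx$. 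This already yields that the out-degree of every vertex equals $|H|$.

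Next I would establish vertex-transitivity. For $g\in G$, right multiplication $\rho_g:Hx\mapsto Hxg$ is a well-defined permutation of the right cosets of $H$, with inverse $\rho_{g^{-1}}$. It is an automorphism of $X/H$ because the defining relation is preserved under right multiplication: $Hy\subseteq HaHx$ implies $Hyg\subseteq HaHxg$, so $Hx\to Hy$ is an arc if and only if $Hxg\to Hyg$ is an arc. The family $\{\rho_g\}_{g\in G}$ acts transitively on vertices, since $\rho_{x^{-1}x'}$ sends $Hx$ to $Hx'$. In particular $X/H$ is regular, and by the previous paragraph its out-degree is $|H|$; constancy of the in-degree then follows by counting arcs, the total number of which is $|H|$ times the number of vertices.

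It remains to prove arc-transitivity. By vertex-transitivity it suffices to show that the stabiliser of the base vertex $H$ acts transitively on the out-neighbours of $H$, namely on the cosets $Hah$ with $h\in H$. The maps $\rho_{h_0}$ with $h_0\in H$ fix the vertex $H$ (because $Hh_0=H$) and send $Hah$ to $Hahh_0$; as $h$ runs over $H$ so does $hh_0$, so $\rho_{h_0}$ permutes the out-neighbours of $H$, and given two of them $Hah_1,Hah_2$ the automorphism $\rho_{h_1^{-1}h_2}$ maps the first to the second. Thus the out-neighbours of $H$ form a single orbit under the vertex-stabiliser, and composing with the vertex-transitive action shows that the automorphism group is transitive on arcs.

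The only genuinely delicate point is the degree computation: everything hinges on the identity $|HaH|=|H|^2$, which is exactly where the intersection property for the $2$-atom $A$ enters, through \eqref{eq:conj}. Once that is in place, the transitivity statements reduce to the routine fact that right multiplication by $G$ (respectively by $H$) realises the vertex-transitive (respectively out-neighbour-transitive) symmetry of the coset graph.
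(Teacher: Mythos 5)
Your proof is correct and follows essentially the same route as the paper: right translations $Hx\mapsto Hxz$ give vertex-transitivity, arc-transitivity is reduced to the stabiliser of the vertex $H$ acting transitively on its out-neighbours, and \eqref{eq:conj} gives that $HaH$ splits into $|H|$ distinct right cosets, hence degree $|H|$. The one small divergence is that you realise the stabiliser action by right translations $\rho_{h_0}$, $h_0\in H$ (which indeed permute the out-neighbours $Hah$ regularly), whereas the paper invokes ``left translations by $z\in H$''; your choice is actually the cleaner implementation, since left translation by an element of $H$ acts trivially on right cosets of $H$.
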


\begin{proof} 
For each $z\in G$ we have  $HaHx\supset Hy$ if and only if
$HaHxz\supset Hyz$, 
so that the right translations $Hx\to Hxz$ are automorphims of $X/H$
and the set of right translations acts 
transitively on the vertex set of $X/H$.

Being vertex--transitive, to show that $X/H$ is also arc--transitive,
it suffices to show that the subset of $Aut(X/H)$ that leaves $H$ invariant acts
transitively on the set of neighbours of $H$. 
This follows by choosing left translations by $z\in H$.

In particular $X/H$ is a regular graph. Observe that by
\eqref{eq:conj}, there are $|H|$ distinct right cosets in $HaH$. Therefore, $X/H$ is regular of degree $|H|$.
\end{proof}

The key observation in the use of the graph $X/H$ is the following
one. 
According to \eqref{eq:hs} and the fact that $\kappa_2 (S)=|S|-1$, we have
$$
|HS|-|S|=|AS|-|S|=|A|-1=2|H|-1.
$$
By looking at $HS$ in the graph $X/H$ we see that all arcs emanating
from $HS$ to $G\setminus HS$ lead to cosets $Has$ for some $s\in HS\setminus S$.
Hence, denoting by $e(HS)$ the number of arcs leading out from $HS$,

\begin{equation}\label{eq:2h-1}
e(HS)\le 2|H|-1.
\end{equation}

At this point we will use some properties of arc--connectivity in
arc--transitive graphs which can be found in \cite{hlst}.  The theory
of atoms can be formulated for the arc--connectivity of graphs, in
which setting it is somewhat simpler. For a subset $C$ of the vertex
set $V(Y)$ of a connected graph $Y$, we denote by $e(C)$ the set of arcs
connecting a point in $C$ to a point in $V(Y)\setminus C$. 
If $k\le |V(Y)|/2$, we shall say that a subset of vertices $C$ is
{\em $k$--separating} if it has cardinality at least $k$ and the set
of vertices not in $C$ has cardinality at
least $k$. We shall say that the graph $Y$ is $k$-separable is there
exists a $k$--separating set.
the $k$--arc connectivity $\lambda_k(Y) $ of $Y$ is the minimum number of arcs leading out
of a $k$--separating set, in other words:
$$
\lambda_k(Y)=\min\{ |e(C)|: k\le |C|\le |V(Y)|-k\}.
$$
An arc $k$--fragment of $Y$ is a set $F$ of vertices with
$e(F)=\lambda_k(Y)$, and an arc $k$--atom of $Y$ is an arc
$k$--fragment with minimum cardinality.

The next Lemma is Corollary 5 from \cite{hlst}.

\begin{lemma}\label{lem:karc} 
Let $Y$ be a connected arc--transitive graph with (out)degree $d$. 
If $Y$ is $k$-separable then the arc $k$-atoms have cardinality at most
$2k-2$.
If furthermore 
$k\le d/3+1$, then
every arc $k$--atom of $Y$ has cardinality $k$. In particular,
$$
\lambda_k (Y)\ge dk-e_k(Y),
$$
where $e_k(Y)$ is the largest number of arcs in a subgraph induced by
a set of cardinality $k$. Moreover, the same conclusion holds for $k$
up to $2d/3+1$ 
if $G$ is antisymmetric (i.e. it has no $2$--cycles).
\end{lemma}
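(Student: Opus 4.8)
The plan is to run the atomic method directly on the out--cut function $|e(\cdot)|$ of the graph, in exact parallel with the treatment of $\kappa_k$ for sumsets. The first step records the two inequalities that drive everything. Because $Y$ is arc--transitive it is in-- and out--regular of the same degree $d$, and counting arcs across a partition shows that the number of arcs from $C$ to $V(Y)\setminus C$ equals the number from $V(Y)\setminus C$ to $C$; hence $|e(C)|=|e(V(Y)\setminus C)|$ for every $C$, so the cut function is invariant under complementation. Combined with the arc--by--arc submodularity $|e(A)|+|e(B)|\ge|e(A\cap B)|+|e(A\cup B)|$, this symmetry yields the posimodular inequality $|e(A)|+|e(B)|\ge|e(A\setminus B)|+|e(B\setminus A)|$ (replace $B$ by its complement). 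I would use only these two inequalities, applied to $k$--separating sets, together with the fact that every automorphism sends arc $k$--atoms to arc $k$--atoms.

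The second step is the intersection property: two distinct arc $k$--atoms $A,B$ satisfy $|A\cap B|\le k-1$. The enabling observation is that an atom lies on the small side: since $V(Y)\setminus F$ is a $k$--fragment whenever $F$ is, the atom cardinality $\alpha$ satisfies $2\alpha\le|V(Y)|$. Thus if $|A\cap B|\ge k$ then $|A\cup B|=2\alpha-|A\cap B|\le|V(Y)|-k$, so $A\cap B$ and $A\cup B$ are both $k$--separating; submodularity forces $|e(A\cap B)|=\lambda_k$, producing a $k$--fragment strictly inside $A$, which contradicts minimality unless $A=B$. The first assertion $|A|\le 2k-2$ then comes from posimodularity: if $\alpha\ge 2k-1$ and $B=gA$ is a translate that meets $A$ without equalling it, then $|A\setminus B|\ge\alpha-(k-1)\ge k$ and $|B\setminus A|\ge k$, and $2\alpha\le|V(Y)|$ makes both $k$--separating; posimodularity forces $|e(A\setminus B)|=\lambda_k$, so $A\setminus B$ is a $k$--fragment of cardinality $<\alpha$, again a contradiction.

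For the exact cardinality I would switch to the arc count
$$|e(C)|=d|C|-i(C),$$
where $i(C)$ is the number of arcs with both ends in $C$. Suppose an arc $k$--atom $A$ has $|A|=\alpha>k$. Deleting any one vertex $u$ leaves a $k$--separating set (here $\alpha-1\ge k$ and $2\alpha\le|V(Y)|$ give both sides at least $k$), so $|e(A\setminus\{u\})|\ge\lambda_k$; comparing the two cuts shows $u$ receives at least as many arcs from inside $A$ as it sends out of $A$, and summing over $u\in A$ gives $i(A)\ge\lambda_k$, hence $2i(A)\ge d\alpha$. So an atom larger than $k$ is internally dense. Bounding $i(A)$ from above — by $\alpha(\alpha-1)$ in general and by $\binom{\alpha}{2}$ when $Y$ is antisymmetric — forces $\alpha$ to be large, and clashing this with the ceiling $\alpha\le 2k-2$ yields a contradiction in a range of $k$ of the stated shape, so that $\alpha=k$. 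Once atoms have exactly $k$ vertices, the bound $\lambda_k\ge dk-e_k(Y)$ is immediate, since $\lambda_k=dk-i(A)$ and $i(A)\le e_k(Y)$ by definition of $e_k(Y)$.

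The hardest point is calibrating the last counting to reach the precise thresholds $k\le d/3+1$ and, under antisymmetry, $k\le 2d/3+1$. The crude estimate above, using $i(A)\le\alpha(\alpha-1)$ (resp. $\binom{\alpha}{2}$), already produces the correct qualitative picture and the correct factor of two between the two cases, but loses a constant and only reaches thresholds of order $d/4$ (resp. $d/2$); closing this gap requires a sharper bound on the internal arcs of an atom, exploiting the intersection property to rule out the densest induced subgraphs rather than using the trivial maximum. A second, more technical gap is the imprimitive case of the bound $|A|\le 2k-2$: when no translate of $A$ properly meets $A$, the atoms are pairwise disjoint blocks and the posimodular shortcut does not apply, so one must analyse the block system separately (the setwise stabiliser acts transitively on each block, so the induced subgraph is itself vertex--transitive) to show this configuration cannot produce an atom of size exceeding $2k-2$.
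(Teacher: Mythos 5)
The paper does not actually prove this lemma: it is imported verbatim as Corollary~5 of \cite{hlst}, so your attempt can only be measured against that source's isoperimetric method. Your framework --- Eulerian symmetry $|e(C)|=|e(\overline{C})|$, submodularity and the derived posimodularity of the cut function, the small-side property $2\alpha\le |V(Y)|$, the intersection property of atoms, and $\lambda_k=dk-i(A)\ge dk-e_k(Y)$ once $\alpha=k$ --- is exactly the right machinery, and those steps are correct as stated. But the two gaps you flag yourself are genuine, and they sit precisely on the content of the lemma. The decisive one is the calibration. Your per-vertex deletion argument (each $u\in A$ has total degree at least $d$ inside $A$, hence $2i(A)\ge d\alpha$) is intrinsically too lossy: no improvement of the bound on $i(A)$ alone will reach $d/3+1$. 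The missing idea is to delete the surplus \emph{en bloc}: if $k<\alpha\le 2k-2$, write $A=U\cup A'$ with $|A'|=k$, so that $1\le |U|\le k-2$; then $|e(A')|\ge |e(A)|=\lambda_k$ unpacks to $d|U|\le i(U)+e(U,A')+e(A',U)$, and bounding the right-hand side by $|U|(|U|-1)+2|U|k$ (resp.\ by $|U|(|U|-1)/2+|U|k$ in the antisymmetric case) gives $d\le 2k+|U|-1\le 3k-3$ (resp.\ $d\le k+(|U|-1)/2\le (3k-3)/2$), a contradiction exactly in the ranges $k\le d/3+1$ and $k\le 2d/3+1$, except at the boundary, where all estimates are forced to be equalities for \emph{every} choice of the $k$-set $A'$; this forces the subgraph induced on $A$ to be complete (with, resp.\ without, digons), whence every $k$-subset of $A$ is itself a $k$-fragment, contradicting minimality of the atom. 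Note that this count uses the first assertion $\alpha\le 2k-2$ through $|U|\le k-2$, so the two halves of the lemma are not independent; note also that the paper applies the lemma at the critical values ($k=3$ with $d=|H|\ge 6$, and $k=3,4$ in the antisymmetric case with small $d$), so losing a constant, as your version does (thresholds near $d/4$ and $d/2$), makes the lemma unusable for its purpose here.

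Your second admitted gap --- the imprimitive case of the bound $\alpha\le 2k-2$, where no automorphism image of $A$ meets $A$ properly --- is also real, but it closes in one line, and it is the one place where arc-transitivity (which your sketch otherwise uses only through regularity) is essential, rather than requiring the separate block-system analysis you propose. If the images of $A$ under the (vertex-transitive) automorphism group are pairwise disjoint or equal, they partition $V(Y)$ into blocks, and automorphisms preserve the classes of intra-block and inter-block arcs; arc-transitivity then forces all arcs into a single class. All arcs intra-block would disconnect $Y$ (since $A\ne V(Y)$), so $i(A)=0$, giving $\lambda_k=|e(A)|=d\alpha$, while any $k$-subset of a $k$-separating set shows $\lambda_k\le dk$; hence $\alpha\le k$, so in the block case the atom has cardinality exactly $k$ and the bound holds trivially. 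With these two repairs your outline becomes a complete proof along essentially the route of \cite{hlst}; as written, it establishes neither the unrestricted bound $\alpha\le 2k-2$ nor the stated thresholds.
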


We will apply Lemma~\ref{lem:karc} to obtain a contradiction with the
hypothesis of Theorem~\ref{thm:2coset} in the case when $HS$ is a
$3$-separating set of $X/H$. This will yield the conclusion. We first
show that we can limit ourselves to studying the case when $X/H$ is a connected graph.

Note that $X/H$ is not connected if and only if $\subgp{A}$ is a
proper subgroup of $G$. Consider the partition $S=S_1\cup S_2\cup
\ldots \cup S_m$ where $S_i$ is the non-empty intersection of $S$ with
some right coset of $G$ modulo $\subgp{A}$. Since 
$S$ generates $G$, we have $m>1$ if $X/H$ is not connected.

\begin{claim}\label{claim:S_i}
  We have $HS_i = \subgp{A}S_i$ for all $i$, $1\leq i\leq m$, except
  for one
  value of $i$.
\end{claim}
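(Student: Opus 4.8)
The plan is to analyze the quantity $|AS|-|A|$, which equals $\kappa_2(S)=|S|-1$ because $A$ is a $2$-atom with $HS=AS$ and $|A|=2|H|$. I want to show this $2$-atom condition forces $\subgp{A}S_i=HS_i$ for all but one index $i$. The basic idea is that for each coset-piece $S_i$, the difference $|AS_i|-|H|$ measures how much $AS_i$ exceeds a single $\subgp{A}$-coset's worth of cosets, and I want to add up these contributions over all $i$.

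\textbf{The key decomposition.} First I would observe that since each $S_i$ lies in a single right coset of $\subgp{A}$, the product $\subgp{A}S_i$ is exactly one right coset of $\subgp{A}$, so it is a disjoint union of $[\subgp{A}:H]$ right cosets of $H$; in particular $AS_i \subseteq \subgp{A}S_i$ and hence $HS_i \subseteq \subgp{A}S_i$. The quantity $|\subgp{A}S_i|-|HS_i|$ is therefore a nonnegative multiple of $|H|$, and it vanishes precisely when $HS_i=\subgp{A}S_i$. The plan is to show
\begin{equation}\label{eq:claimsum}
\sum_{i=1}^m \bigl(|\subgp{A}S_i|-|HS_i|\bigr) \leq |H|-1,
\end{equation}
from which, since each summand is a multiple of $|H|$, at most one summand can be nonzero, giving the claim. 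To establish \eqref{eq:claimsum} I would compare two upper and lower bounds on $|AS|$. On one hand, $A=H\cup Ha$ gives $AS_i=HS_i\cup HaS_i$, so $|AS_i|\leq |HS_i|+|H|$ with a corresponding lower bound exploiting that $Ha$ shifts the $H$-coset structure; summing over $i$ and using $|AS|=|A|+|S|-1=2|H|+|S|-1$ should pin down the total excess.

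\textbf{Carrying out the count.} More precisely I expect to argue as follows. Since $A$ is a $2$-atom, for the subgroup $\subgp{A}$ playing the role of a competing fragment we have $|\subgp{A}S|-|\subgp{A}| \geq |AS|-|A|=|S|-1$ (otherwise $\subgp{A}$ would beat $A$, and it is left-periodic). Writing $\subgp{A}S=\bigcup_i \subgp{A}S_i$ as a disjoint union of $\subgp{A}$-cosets, and $HS=AS=\bigcup_i HS_i$, I would compute $|\subgp{A}S|-|HS|=\sum_i(|\subgp{A}S_i|-|HS_i|)$ and feed in the relations $|HS|=|S|-1+2|H|$ and $|\subgp{A}|=[\subgp{A}:H]\,|H|$. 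The fragment inequality for $\subgp{A}$ then converts into an upper bound on $\sum_i(|\subgp{A}S_i|-|HS_i|)$ of the form \eqref{eq:claimsum}; the constant $|H|-1$ rather than $|H|$ comes from the strict improvement the $2$-atom enjoys over the subgroup $\subgp{A}$, i.e.\ from the $-1$ in $|S|-1$.

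\textbf{Main obstacle.} The delicate point will be getting the constant on the right of \eqref{eq:claimsum} to be strictly less than $|H|$, so that \emph{at most one} index survives rather than two. This hinges on using the $2$-atom minimality sharply: the subgroup $\subgp{A}$ is itself a candidate separating set, and because $A$ strictly improves on it (we have assumed no $2$-fragment is a subgroup, and $\subgp{A}$ would be one if it were a fragment), the inequality must be strict, saving exactly the one unit needed. I would need to check that $\subgp{A}$ is genuinely $2$-separating so that its boundary is at least $\kappa_2(S)=|S|-1$, which follows from the standing hypothesis $|G|\geq 2\alpha_2(S)+\kappa_2(S)$ guaranteeing enough room. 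Once the strict inequality is secured, the divisibility by $|H|$ finishes the argument immediately.
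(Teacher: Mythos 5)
There is a genuine gap, and it is fatal to the approach: your central inequality $\sum_{i}\bigl(|\subgp{A}S_i|-|HS_i|\bigr)\leq |H|-1$ is false, because the $2$-atom property only produces \emph{lower} bounds on $|\subgp{A}S|$, never upper bounds. Your sum telescopes exactly: the sets $\subgp{A}S_i$ are pairwise disjoint (they are distinct $\subgp{A}$-cosets) and so are the sets $HS_i$, hence
\[
\sum_{i=1}^{m}\bigl(|\subgp{A}S_i|-|HS_i|\bigr)=|\subgp{A}S|-|HS|,
\]
where $|HS|=|AS|=|S|+2|H|-1$. The fragment comparison you invoke, $|\subgp{A}S|-|\subgp{A}|\geq |AS|-|A|=|S|-1$, bounds $|\subgp{A}S|$ from \emph{below}, so it gives a lower bound on this sum, not an upper one; you have flipped the direction. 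Concretely, since $A$ is not a subgroup (standing assumption of this section), $\subgp{A}\supsetneq A$ is a union of at least three right $H$-cosets, so $|\subgp{A}|\geq 3|H|$ and the sum is at least $|\subgp{A}|-2|H|\geq |H|$ whenever $\subgp{A}S\neq G$; and if $\subgp{A}S=G$ the remark after Theorem~\ref{thm:2coset} gives at least $2|H|$. There is also an internal inconsistency: had your bound held, divisibility by $|H|$ would force \emph{every} summand to vanish, i.e.\ $HS_i=\subgp{A}S_i$ for all $i$ — not ``at most one exception.'' That conclusion is itself impossible, since it gives $AS=HS=\subgp{A}S$, making the subgroup $\subgp{A}$ a better fragment than the $2$-atom $A$, and it contradicts the claim being proved, which asserts exactly one exceptional index.

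The missing idea is that the correct count is of \emph{arcs of the quotient graph} $X/H$, not of missing cosets. A pure cardinality count cannot separate ``one component missing many $H$-cosets'' from ``two components each missing a few'': both produce the same total deficiency $|\subgp{A}S|-|HS|$. The paper instead counts arcs leaving $HS$ in $X/H$, which number at most $2|H|-1$ by \eqref{eq:2h-1}; each connected component of $X/H$ is a connected arc-transitive graph of degree $|H|$, so by Lemma~\ref{lem:karc} (the case $k=1$, i.e.\ edge-connectivity equals degree) any $HS_i$ that is a nonempty proper subset of its component emits at least $|H|$ arcs, and these arc sets are disjoint for distinct $i$. Two exceptional indices would therefore require at least $2|H|>2|H|-1$ outgoing arcs, a contradiction; and at least one exceptional index must exist by the subgroup-versus-atom comparison above. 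This edge-connectivity input is precisely what your counting scheme lacks, and it cannot be recovered from estimates on the quantities $|\subgp{A}S_i|-|HS_i|$ alone.
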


\begin{proof}
  Every subgraph of $X/H$ induced by $HS_i$ is connected and
  arc-transitive, hence it has $\lambda_1 = |H|$ by Lemma~\ref{lem:karc}
  since the degree of $X/H$ is $|H|$. Therefore if $HS_i$ is
  $1$--separating in its connected component, it has at least $|H|$
  outgoing edges. But \eqref{eq:2h-1} implies that there can only be a
  single such $HS_i$. Note that we cannot have $HS_i = \subgp{A}S_i$
  for all $i$ otherwise $AS=\subgp{A}S$ which contradicts $A$ being a
  $2$-atom of $S$.
\end{proof}

  Without loss of generality, let $S_1$ be such that
  $HS_1\neq\subgp{A}S_1$ and $1\in S_1$.

  \begin{claim}\label{claim:2|H|}
    We have $|AS_1|\leq |\subgp{A}|-2|H|$. 
  \end{claim}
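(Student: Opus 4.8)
The plan is to reduce the inequality to a statement about the single exceptional component and then to run, inside $K:=\subgp{A}$, the same cardinality argument that underlies the Remark following Theorem~\ref{thm:2coset}. First I would exploit \eqref{eq:hs}: since $A\subseteq K$, intersecting $AS=HS$ with the coset $K$ forces $AS\cap K=AS_1$ and $HS\cap K=HS_1$, so that $AS_1=HS_1$. Hence the claim $|AS_1|\le |K|-2|H|$ is equivalent to showing that $K\setminus HS_1$ consists of \emph{at least two} right $H$-cosets; in graph language this says that the complement of $HS_1$ inside the connected component of $X/H$ containing the identity coset $H$ has at least two vertices.

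Next I would transfer the isoperimetric data from $G$ to $K$. Using Claim~\ref{claim:S_i} (so that $HS_i=\subgp{A}S_i$ is a full $K$-coset for every hit index $i\neq 1$) together with $AS=HS$, one gets the accounting identity $2|H|-1=(|AS_1|-|S_1|)+\sum_{i\neq 1}(|K|-|S_i|)$, each of whose summands is non-negative. I would combine this with the intersection property of the $2$-atom $A$ (Theorem~\ref{thm:int}) and the fact, from \eqref{eq:conj}, that $HaH$ splits into exactly $|H|$ distinct right cosets, to argue that $A$ behaves as a $2$-atom of $S_1$ relative to $K$: that the second isoperimetric number of $S_1$ computed inside $K$ equals $|S_1|-1$, that its atom size is $|A|=2|H|$, and that the cardinality bound $|K|\ge 2\alpha_2(S_1)+\kappa_2(S_1)$ holds when these quantities are evaluated inside $K$.

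With this in hand the conclusion follows exactly as the Remark does in $G$: if $K\setminus HS_1$ were a \emph{single} coset $Hx_0$, then $|K|=|HS_1|+|H|=\alpha_2(S_1)+\kappa_2(S_1)+|H|<2\alpha_2(S_1)+\kappa_2(S_1)$ (all evaluated in $K$), contradicting the cardinality bound of the previous paragraph. Concretely, this leftover coset $Hx_0$ can be left-translated by $x_0^{-1}$ to the conjugate subgroup $x_0^{-1}Hx_0$, which would then be a $2$-fragment of $S^{-1}$ that is a subgroup, contradicting the standing hypothesis of this section that no $2$-fragment of $S$ or of $S^{-1}$ is a subgroup. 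Therefore $K\setminus HS_1$ contains at least two $H$-cosets, i.e. $|AS_1|=|HS_1|\le |K|-2|H|$.

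The hard part is the second paragraph: establishing that $A$ genuinely plays the role of a $2$-atom of $S_1$ inside $K$, and in particular verifying the cardinality hypothesis $|K|\ge 2\alpha_2(S_1)+\kappa_2(S_1)$ there. This is delicate because the required $2$-separability of $A$ in $K$ (that $K\setminus AS_1$ already contains at least two cosets) is essentially the statement being proved, so the argument cannot be purely formal. It must use the intersection property of $A$ and the precise distribution of $S$ among the right cosets of $K$, so that the degenerate single-coset complement is \emph{excluded} by the subgroup-fragment contradiction above rather than tacitly assumed away.
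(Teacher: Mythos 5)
Your opening reduction is correct and matches the (implicit) first step of the paper's argument: since $AS\cap\subgp{A}=AS_1$ and $HS\cap\subgp{A}=HS_1$, equation \eqref{eq:hs} gives $AS_1=HS_1\neq\subgp{A}$, and as this set is a union of right $H$-cosets the claim says exactly that it misses at least two of them. The rest of your proposal, however, has a genuine gap which you yourself concede at the end: everything hinges on establishing that $A$ is a $2$-atom of $S_1$ inside $K=\subgp{A}$ with $\alpha_2(S_1)=|A|=2|H|$ and $\kappa_2(S_1)=|AS_1|-|A|$, \emph{and} that $|K|\ge 2\alpha_2(S_1)+\kappa_2(S_1)$. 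With those values plugged in, that last inequality reads $|K|\ge |AS_1|+2|H|$, which is literally Claim~\ref{claim:2|H|} itself; so your second paragraph is not a stepping stone but a restatement of the statement to be proved, and no non-circular proof of it can exist in isolation. Your fallback contradiction is also unavailable: you appeal to ``no $2$-fragment of $S$ or of $S^{-1}$ is a subgroup,'' but Theorem~\ref{thm:2coset} only assumes that $S$ has no $2$-fragments that are subgroups (the section-wide assumption concerns $2$-\emph{atoms} of $S^{\pm 1}$, which is weaker), and in any case $x_0^{-1}Hx_0$ would only be a fragment of $S_1^{-1}$ \emph{inside} $K$; the transfer of fragments and atoms between $(K,S_1)$ and $(G,S)$ is established in the paper only \emph{after} this claim, in the subsequent Claim and Lemma, whose proofs use Claim~\ref{claim:2|H|}.

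The paper avoids all of this by staying in $G$ and playing the subgroup $\subgp{A}$ itself off against the atom $A$. If the claim fails, then $|AS_1|=|\subgp{A}|-|H|$, and there are two cases. If $\subgp{A}S\neq G$, then using Claim~\ref{claim:S_i} one computes $|\subgp{A}S|-|\subgp{A}|=|\subgp{A}(S\setminus S_1)|\le |AS|-|A|$ (the inequality amounts to $|\subgp{A}|\ge 3|H|$, which holds because $AS_1\supseteq A$), so $\subgp{A}$ is a $2$-fragment of $S$ which is a subgroup, contradicting the hypothesis of Theorem~\ref{thm:2coset}. If $\subgp{A}S=G$, then $|AS|=|AS_1|+|G|-|\subgp{A}|=|G|-|H|$, whence $|G|=|AS|+|H|<|AS|+|A|=2\alpha_2(S)+\kappa_2(S)$, contradicting the cardinality hypothesis of Theorem~\ref{thm:2coset}. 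No isoperimetric machinery inside $K$ is needed, and this two-case dichotomy on whether $\subgp{A}S=G$ is the idea missing from your proposal.
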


  \begin{proof}
    If not, then $|AS_1| = |\subgp{A}|-|H|$. But either $\subgp{A}S
    \neq G$, and $|\subgp{A}S| - |\subgp{A}|\leq |AS|-|A|$ which
    contradicts the hypothesis of Theorem~\ref{thm:2coset}
    that no fragment of $S$ is a subgroup, or $\subgp{A}S =G$, but
    then $|AS|=|G|-|H|$ so that $|G|=|AS|+|H|<|AS|+|A|$ meaning
    $|G|<2\alpha_2(S)+\kappa_2(S)$ which also contradicts the
    hypothesis of Theorem~\ref{thm:2coset}.
  \end{proof}

  \begin{claim}
    For every $i>1$, we have $S_i=\subgp{A}S_i$.
  \end{claim}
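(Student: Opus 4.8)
The plan is to feed the exact atom equation $|AS| = |A| + |S| - 1$ through the component decomposition of $X/H$ and then kill the ``deficient'' cosets using the minimality of the $2$-atom $A$. First I would record the basic identity. By Claim~\ref{claim:S_i} every $S_i$ with $i>1$ satisfies $HS_i = \subgp{A}S_i = \subgp{A}g_i$, the full right coset of $\subgp{A}$ containing it; since $A = H\cup Ha$ and $Ha\subseteq\subgp{A}$ this upgrades to $AS_i = \subgp{A}g_i$, so each such component contributes exactly $|\subgp{A}|$ to $AS$. Because the right cosets of $\subgp{A}$ are the connected components of $X/H$, the products $AS_i$ lie in pairwise disjoint cosets, so from $AS = HS$ (equation \eqref{eq:hs}) and $\kappa_2(S) = |S|-1$ I obtain
$$
|AS_1| + (m-1)|\subgp{A}| = |AS| = |A| + |S| - 1 = 2|H| + |S_1| + \sum_{i>1}|S_i| - 1,
$$
which rearranges to
$$
\sum_{i>1}\bigl(|\subgp{A}| - |S_i|\bigr) = (|A|-1) - \bigl(|AS_1| - |S_1|\bigr).
$$

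Every summand on the left is non-negative, and it vanishes exactly when $S_i = \subgp{A}S_i$. Hence the whole claim reduces to the single inequality $|AS_1| \ge |A| + |S_1| - 1$: this forces the right-hand side above to be $\le 0$, hence $=0$, and therefore each $S_i$ with $i>1$ is a full coset.

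The hard part is this last inequality, and I expect it to be the main obstacle. It cannot be proved locally inside $\subgp{A}$: a set $S_1$ with $AS_1 = HS_1$ can have $|AS_1| - |S_1|$ as small as $|H|$ (take $H = \subgp{z}$ of order $2$, $A = \{1,z,a,za\}$ and $S_1 = \{1, za\}$ in a dihedral group, where one checks $AS_1 = HS_1$ but $|AS_1| - |S_1| = |A|-2$), so the bound genuinely uses that $A$ is a $2$-atom of the \emph{whole} set $S$, not merely a subset of $\subgp{A}$. The route I would take is by contradiction: if $|AS_1| - |S_1| < |A| - 1$, then the identity above produces a coset $\subgp{A}g_{i_0}$, $i_0>1$, and a point $y\in \subgp{A}g_{i_0}\setminus S$ lying in an $H$-coset that already meets $S$ (such a coset exists because $HS_{i_0}$ is full by Claim~\ref{claim:S_i}). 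Using $AS = HS$ together with the intersection property of $2$-atoms (Theorem~\ref{thm:int}) and the fact that left translates of $A$ are again $2$-atoms, I would locate a $2$-separable set of cardinality strictly less than $|A|$ whose boundary is at most $\kappa_2(S)$, contradicting that $A$ is a $2$-atom. Making this construction precise — pinning down the smaller fragment despite the mismatch between the left-translate structure of the atoms and the right-coset structure of the $S_i$ — is where the real work lies.
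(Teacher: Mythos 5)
Your counting identity is correct and is in fact the paper's own first step in disguise: since $AS_i=\subgp{A}S_i$ is a full coset for each $i>1$ (Claim~\ref{claim:S_i}), the atom equation $|AS|-|A|=\kappa_2(S)=|S|-1$ gives exactly the paper's equation \eqref{eq:AS-A}, and the existence of a non-full $S_i$ with $i>1$ is equivalent to $|AS_1|-|A|<|S_1|-1$. So your reduction of the claim to the single inequality $|AS_1|\geq |A|+|S_1|-1$ is sound, and it is the same reduction the paper performs.

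But the inequality itself --- which you rightly identify as the hard part --- is left unproved, and the route you sketch aims at a contradiction that cannot be reached. You propose to build, from the failure of the claim, a $2$-separating set for $S$ of cardinality strictly less than $|A|$ with boundary at most $\kappa_2(S)$. What the deficiency $\kappa_1(S_1)\leq |AS_1|-|A|<|S_1|-1$ actually yields, via Hamidoune's machinery, is a nontrivial \emph{subgroup} $K$ that is a $2$-fragment of $S$, and such a $K$ necessarily has cardinality exactly $|A|$: it cannot be smaller precisely because $A$ is a $2$-atom of $S$, so no $2$-fragment of $S$ of cardinality $<\alpha_2(S)=|A|$ exists under any circumstances. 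The contradiction must therefore be with the hypothesis of Theorem~\ref{thm:2coset} that $S$ has no $2$-fragments which are subgroups --- a hypothesis your sketch never invokes, and without which the claim has no proof. Concretely, the paper finishes as follows: (a) for any $2$-fragment $B$ of $S_1$, sandwiching $|AS|-|A|\leq |BS|-|B|\leq |BS_1|-|B|+|\subgp{A}(S\setminus S_1)|$ against \eqref{eq:AS-A} shows that $A$ is a $2$-fragment of $S_1$ and, since all these inequalities are then equalities, that every $2$-fragment of $S_1$ is a $2$-fragment of $S$; (b) Claim~\ref{claim:2|H|} --- also absent from your plan --- gives $|\subgp{A}|\geq |AS_1|+|A|\geq 2\alpha_2(S_1)+\kappa_2(S_1)$, so the intersection property holds for atoms of $S_1$ inside $\subgp{A}$ and the $1$-atom of $S_1$ containing $1$ is a nontrivial subgroup; finally $\kappa_1(S_1)<|S_1|-1$ forces $\kappa_2(S_1)=\kappa_1(S_1)$, so this subgroup is a $2$-fragment of $S_1$, hence of $S$ by (a), contradicting the hypothesis of Theorem~\ref{thm:2coset}. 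In short: the reduction you carried out is the easy half and matches the paper; the missing half requires transferring fragments between $S$ and $S_1$, Claim~\ref{claim:2|H|}, and the subgroup-fragment hypothesis, none of which appear in your proposal.
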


  \begin{proof}
    If $S_i\neq\subgp{A}S_i$ for some $i\neq 1$, then
    $|\subgp{A}(S\setminus S_1)| > |S\setminus S_1|$ 
    and Claim~\ref{claim:S_i} implies
    \begin{equation}
      \label{eq:AS-A}
      |S|-1=|AS|-|A|=|AS_1|-|A| + |\subgp{A}(S\setminus S_1)|
    \end{equation}
     so that we have
    $$|AS_1|-|A| < |S_1|-1.$$
    In other words, $\kappa_1(S_1)<|S_1|-1$.
    Furthermore, if $B$ is a $2$-fragment of $S_1$, then we have
    \begin{align*}
      |BS|-|B| &= |BS_1|-|B| + |BS| -|BS_1|\\
               &\leq |BS_1|-|B| + |\subgp{A}(S\setminus S_1)|
    \end{align*}
    and by writing, since $A$ is a $2$-fragment of $S$,
    $$|AS|-|A| \leq |BS|-|B|,$$
    we obtain that, applying \eqref{eq:AS-A} again,
    $$|AS_1|-|A| \leq |BS_1|-|B|$$
    so that $A$ is also a $2$-fragment of $S_1$. Note that since $B$
    is a $2$-fragment of $S_1$, the two preceding inequalities must be
    equalities, so that $B$ must also be a $2$-fragment of $S$.
    By Claim~\ref{claim:2|H|} we have $|AS_1|\leq |\subgp{A}|-|A|$,
    therefore $|\subgp{A}|\geq 2\alpha_2(S_1)+\kappa_2(S_1)$, meaning
    that the intersection property must hold for $1$-atoms of $S_1$,
    and there is a $1$-atom of $S_1$ that is a non-trivial subgroup. This subgroup
    must be a $2$-fragment of $S$, contradicting the hypothesis of
    Theorem~\ref{thm:2coset}.
  \end{proof}

Finally we can now state:

\begin{lemma}
  Under the hypothesis of Theorem~\ref{thm:2coset}, replacing $S$ by a
  right translate if need be, we have
  $$S = (S\cap\subgp{A}) \cup (G\setminus\subgp{A}).$$
\end{lemma}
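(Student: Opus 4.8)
The plan is to read off most of the statement from the three claims just established, and isolate the one remaining point that requires a genuine argument. The last claim gives $S_i=\subgp A S_i$ for every $i>1$; since each $S_i$ is by construction the intersection of $S$ with a single right coset of $\subgp A$, and $\subgp A S_i$ is then the whole of that coset, this says that each such $S_i$ is a \emph{full} right coset of $\subgp A$. Together with $S_1=S\cap\subgp A$ this gives $S=(S\cap\subgp A)\cup\bigcup_{i>1}S_i$ with every $S_i$ ($i>1$) a complete coset. Consequently the asserted identity $S=(S\cap\subgp A)\cup(G\setminus\subgp A)$ is \emph{equivalent} to the single assertion that $S$ meets every right coset of $\subgp A$, and this is what I would prove.

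The subtlety is that generation of $G$ by $S$ does not suffice on its own: it only guarantees that iterated products of elements of $S$ visit every coset of $\subgp A$, not that $S$ itself meets every coset. The real leverage must therefore come from the tight value $\kappa_1(S)=\kappa_2(S)=|S|-1$ and from the hypothesis of Theorem~\ref{thm:2coset} forbidding subgroup $2$-fragments. I would argue by contradiction. Write $N=\subgp A$ and let $m$ be the number of cosets of $N$ met by $S$. The full-coset structure gives $|NS|=m|N|$ and $|S|=|S_1|+(m-1)|N|$. If some coset of $N$ is missed by $S$, then $N^{*}=G\setminus NS\neq\emptyset$, so $N$ is $1$-separating and
$$(m-1)|N|=|NS|-|N|\ \ge\ \kappa_1(S)=|S|-1=|S_1|-1+(m-1)|N|.$$
This forces $|S_1|\le 1$, hence $S_1=\{1\}$, and moreover every inequality above is an equality, so that $|NS|-|N|=\kappa_1(S)=\kappa_2(S)$.

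At this point $N=\subgp A$ is itself a $2$-fragment of $S$: it satisfies $|N|\ge|A|=2|H|\ge 2$; its complement $N^{*}$ is a nonempty union of $N$-cosets and hence has cardinality at least $|N|\ge 2$; and its boundary $\partial_S N=NS\setminus N$ has cardinality exactly $(m-1)|N|=\kappa_2(S)$. But $N$ is a subgroup, contradicting the assumption in Theorem~\ref{thm:2coset} that $S$ admits no $2$-fragment which is a subgroup. Hence $S$ meets every right coset of $\subgp A$; combined with the last claim (so that each such coset other than $\subgp A$ is fully contained in $S$) and with $S_1=S\cap\subgp A$, this yields $S=(S\cap\subgp A)\cup(G\setminus\subgp A)$. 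The right translate in the statement is only needed to keep $1\in S$ (equivalently $1\in S_1$), as already arranged. I expect the one delicate point — the main obstacle — to be the passage from $|S_1|=1$ to a bona fide subgroup $2$-fragment: one must verify that $N$ is genuinely $2$-separable (both $|N|\ge 2$ and $|N^{*}|\ge 2$) and that its boundary attains $\kappa_2(S)$, which is precisely what the equality case above supplies.
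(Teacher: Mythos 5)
Your proof is correct, and its core coincides with the paper's: both reduce the lemma, via the last claim, to showing that $S$ meets every right coset of $N=\subgp{A}$, and both get at this by comparing the boundary $|NS|-|N|=(m-1)|N|$ of the subgroup $N$ with $\kappa_1(S)=|S|-1$. Where you genuinely diverge is in how the degenerate possibility $|S_1|=1$ is excluded. The paper rules it out before any case analysis by invoking the identity $HS=AS$ of \eqref{eq:hs} (itself a consequence of $A$ being a $2$-atom): if $S_1=\{1\}$ then $Ha\subset AS$ while $Ha\cap HS=\emptyset$, since $Ha$ lies inside $\subgp{A}$ and is disjoint from $H$ and from $S\setminus S_1$. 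With $|S_1|\geq 2$ in hand, a missed coset gives $|NS|-|N|=|S|-|S_1|\leq |S|-2<\kappa_1(S)$, contradicting the minimality of $\kappa_1$ directly; no appeal to the fragment hypothesis is needed at this point. You instead let the inequality $|NS|-|N|\geq\kappa_1(S)$ force $S_1=\{1\}$ with equality throughout, and then dispose of this case by checking that $N$ is $2$-separating (here your verification that $|N^*|$ is a nonempty union of $N$-cosets, hence of size at least $|N|\geq 2$, is exactly the delicate point and is done correctly), so that $N$ becomes a $2$-fragment of $S$ which is a subgroup, contradicting the hypothesis of Theorem~\ref{thm:2coset}. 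Both arguments are sound: the paper's is shorter because \eqref{eq:hs} is already available from Section~\ref{sec:notsubgroups}, while yours is independent of \eqref{eq:hs} at the price of consuming the no-subgroup-fragment part of the hypothesis, which the paper's proof of this particular lemma never uses.
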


\begin{proof}
  The last claim means that $\subgp{A}(S\setminus S_1) = S\setminus
  S_1$. By \eqref{eq:hs} we must have $|S_1|>1$.
  Now if $S\setminus S_1\neq G\setminus\subgp{A}$, then
  $\subgp{A}S\neq G$ and $|\subgp{A}S|-|\subgp{A}| < |S|-1$,
  contradicting the hypothesis of Theorem~\ref{thm:2coset}.
\end{proof}

The preceding lemma shows that $\kappa_1(S_1)=|S_1|-1$ and that any
$2$-atom of $S_1$ is a $2$-atom of $S$. We see therefore that it
suffices to
prove Theorem~\ref{thm:2coset} with the additional hypothesis that
$A$ generates $G$. We will henceforth suppose this to be the case, so
that we are dealing with a connected graph $X/H$.

We now work towards proving:
\begin{proposition}\label{prop:lambda3}
  Suppose $S$ has a $2$--atom $A=H\cup Ha$ which is the union of two
  right cosets of some subgroup $H$ and suppose that $A$ defines a connected
  graph $X/H$. Then the subset of vertices of $X/H$ defined by $HS$ is
  not $3$--separating.
\end{proposition}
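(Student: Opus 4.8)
The plan is to argue by contradiction, converting the desired statement into a lower bound on the arc-connectivity of $X/H$. Suppose that $HS$, regarded as a set of vertices of $X/H$, is $3$-separating. Then $X/H$ is $3$-separable and, by the very definition of $\lambda_3$, we have $\lambda_3(X/H)\le e(HS)$; combining this with \eqref{eq:2h-1} gives $\lambda_3(X/H)\le 2|H|-1$. The whole proposition thus amounts to establishing the reverse strict inequality $\lambda_3(X/H)> 2|H|-1$, which would be the sought contradiction.

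To lower-bound $\lambda_3$ I would apply Lemma~\ref{lem:karc} to the connected arc-transitive graph $Y=X/H$, whose degree is $d=|H|$ by Lemma~\ref{ref:X/H}. The inequality I want is $\lambda_3(X/H)\ge 3|H|-e_3(X/H)$, where $e_3(X/H)$ is the largest number of arcs spanned by three vertices. The key elementary point is that $X/H$ is a \emph{simple} loopless digraph: by \eqref{eq:conj} there are exactly $|H|$ distinct cosets in $HaH$, so the $|H|$ out-neighbours of any vertex are pairwise distinct cosets and no arc is parallel or a loop (a loop at $H$ would force $a\in H$, impossible since $Ha\ne H$). Hence three vertices span at most the six ordered pairs between them, so $e_3(X/H)\le 6$ and $\lambda_3(X/H)\ge 3|H|-6$.

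Lemma~\ref{lem:karc} yields the displayed bound provided $3\le d/3+1$, i.e. $|H|\ge 6$. In that range $3|H|-6>2|H|-1$ exactly when $|H|>5$, so $\lambda_3(X/H)>2|H|-1\ge e(HS)$, contradicting that $HS$ is $3$-separating. This proves the proposition for all $|H|\ge 6$, which is precisely the reduction announced by the title of the subsection.

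The main obstacle is the range $2\le |H|\le 5$ (note $|H|\ge 2$, since $|H|=1$ would make $A=\{1,a\}$ a $2$-atom of cardinality $2$, excluded by hypothesis), where the degree condition $|H|\ge 6$ fails and the crude bound $e_3\le 6$ is too weak against $2|H|-1$. Here I would invoke the antisymmetric variant of Lemma~\ref{lem:karc}, valid for $3\le 2d/3+1$, i.e. $|H|\ge 3$, as soon as $X/H$ has no $2$-cycle: under antisymmetry three vertices span at most three arcs, so $e_3\le 3$ and $\lambda_3\ge 3|H|-3>2|H|-1$ already for $|H|\ge 3$. The delicate task is therefore to decide when $X/H$ is antisymmetric; tracking a $2$-cycle through the vertex $H$ shows, after right-translation, that one exists if and only if some coset lies in $HaH\cap Ha^{-1}H$, that is, if and only if $a^{-1}\in HaH$. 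The residual cases --- $|H|=2$ always, and $|H|\in\{3,4,5\}$ with $a^{-1}\in HaH$ --- escape the generic connectivity estimate and must be disposed of by a direct structural analysis of the small graph $X/H$; this case-by-case study is the hardest part of the argument and is what the remainder of the section carries out.
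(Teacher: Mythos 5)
Your reduction is set up correctly and, for the cases it actually covers, it coincides with the paper's argument: the contradiction framework $\lambda_3(X/H)\le e(HS)\le 2|H|-1$, the bound $\lambda_3\ge 3|H|-6>2|H|-1$ from Lemma~\ref{lem:karc} when $|H|\ge 6$, and the antisymmetric bound $\lambda_3\ge 3|H|-3>2|H|-1$ when $|H|\ge 3$ are all exactly what the paper does. Your criterion for the existence of a $2$-cycle ($a^{-1}\in HaH$, equivalently $a^2\in H$ after replacing $a$ by $ha$, which leaves $A$ unchanged) is also correct. But the proposal then stops: the symmetric case for $|H|\in\{3,4,5\}$ and the entire case $|H|=2$ are declared to require ``a direct structural analysis'' and are not carried out. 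These are not residual verifications; they are the bulk of the proof, so as it stands the proposal has a genuine gap.

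Two points about what is missing. First, the symmetric case is not disposed of by analyzing the small graph $X/H$ at all, but by an algebraic argument that works uniformly for every $|H|$ (Lemma~\ref{lem:symmetric}): when $a^2\in H$, left multiplication by $a$ stabilizes every set of the form $Hx\cup a^{-1}Hx$ (Lemma~\ref{lem:stable}); writing $C$ for the complement of $HS$ and using $HS=AS=HaS$ together with $|HS|=|S|+2|H|-1$ and the fact that $HS$ is $2$-separating, one finds $\overline{S}=C\cup a^{-1}C\cup E$ with $|E|\le 1$, whence $|\{1,a\}S|\le |S|+1$, contradicting $\alpha_2(S)>2$. You will not find this by graph connectivity alone, since it uses the $2$-atom property of $A$ against the group structure. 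Second, in the antisymmetric case $|H|=2$ the bound $\lambda_3\ge 3|H|-3=3$ does not beat $2|H|-1=3$, and genuinely more work is needed: the paper shows every edge lies in an oriented triangle (Lemma~\ref{lem:triangle}), excludes $K_4^*$ subgraphs outside the octahedron (Lemma~\ref{lem:K4*}), proves $\lambda_4\ge 4$ by examining induced subgraphs on $4$, $5$ and $6$ vertices (Proposition~\ref{prop:lambda4}), and then eliminates the two leftover configurations --- $HS$ equal to three cosets, or to the complement of three cosets --- by explicit group computations exploiting the relations $a^3\in H$ or $aha^2=h$ (Propositions~\ref{prop:3cosets} and \ref{prop:complement}). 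Until you supply arguments for these two blocks, the proposition is established only for $|H|\ge 3$ with $a^{-1}\notin HaH$.
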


{\bf Remark.} We have $|AS|-|A| = \kappa_2(S) >0$, therefore $|AS|>2|H|$
so that by \eqref{eq:hs} we have $|HS|>2|H|$: hence $HS$ in $X/H$
consists of at least three vertices. Therefore $HS$ defines a
$3$-separating set if and only if there are at least $3$ vertices of
$X/H$ in the complement of $HS$.

\begin{proof}[Proof of Proposition~\ref{prop:lambda3} when $|H|\geq
  6$]
  By Lemma \ref{lem:karc} with $k=3$, since the degree of $X/H$ is
  $|H|\geq 6$ and $e_3(X/H)\le 6$, we have 
\begin{equation}\label{eq:l3}
\lambda_3 (X/H)\ge 3|H|-6>2|H|-1,
\end{equation}
contradicting \eqref{eq:2h-1}, so that $HS$ cannot define a
$3$--separating subset of vertices of $X/H$.
\end{proof}

We next study the small values of $|H|$. 

\subsection{Proof of Proposition~\ref{prop:lambda3} in the case $|H|=3,4,5$}

First note that an arc-transitive graph is either symmetric, or
antisymmetric (meaning that if there is an arc from $x$ to $y$ then
there is no arc from $y$ to $x$). If $X/H$ is antisymmetric, then
$e_3(X/H)\leq 3$. If $|H|\geq 3$ then
Lemma~\ref{lem:karc} applies for $k=3$ and if $HS$ is $3$-separating in $X/H$ we have
$\lambda_3(X/H)\geq 3|H|-3\geq 2|H|$ which contradicts \eqref{eq:2h-1}.

We are therefore left with the case when $X/H$ is symmetric. We now
rule this out.

\begin{lemma}\label{lem:symmetric}
  If $A=H\cup Ha$ is a $2$-atom of $S$, then the graph $X/H$ cannot be symmetric.
\end{lemma}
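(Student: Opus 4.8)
We have $A = H \cup Ha$ a 2-atom of $S$, generating $G$, so that $X/H$ is a connected arc-transitive graph of degree $|H|$ on the right cosets of $H$. The vertex $H$ has exactly $|H|$ out-neighbours, namely the cosets contained in $HaH$ (there are $|H|$ of them by \eqref{eq:conj}). Symmetric means: whenever there is an arc $Hx \to Hy$, there is also an arc $Hy \to Hx$. Arc $H \to Hy$ means $Hy \subseteq HaH$, i.e. $Hy = Hah$ for some $h \in H$. The arc in the reverse direction $Hy \to H$ means $H \subseteq HaHy$, i.e. $H = Ha h' y$ for some $h' \in H$.

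Let me work out what symmetry says concretely. The out-neighbours of $H$ are the cosets $Hah$, $h \in H$. Symmetry requires each such $Hah$ to have $H$ as an out-neighbour: $H \in Ha H \cdot ah = HaHah$... wait, let me recompute. Arc from vertex $Hx$ to $Hy$ iff $Hy \subseteq HaHx$. So arc from $Hah$ to $H$ iff $H \subseteq HaH \cdot ah = HaHah$. Since $HaH = \bigcup_{k} Hak$, we get $HaHah = \bigcup_k Hakah$. So $H \subseteq \bigcup_k Hakah$ means $H = Hak_0 ah$ for some $k_0 \in H$, i.e. $1 \in Hak_0 ah$, i.e. there's an element giving $ak_0 a h \in H$, i.e. $a k_0 a \in H h^{-1} = H$. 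So symmetry is equivalent to: for every $h \in H$ there exists $k \in H$ with $aka \in H$ — actually, for the specific coset $Hah$ to point back to $H$, I need $aka h \in H$ for some $k$, which since $h \in H$ just means $aka \in H$ for some $k \in H$.

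So: **$X/H$ is symmetric iff there exists $k \in H$ with $aka \in H$.** Let me double check this is uniform across out-neighbours. Actually by arc-transitivity symmetry at one arc gives it everywhere, so it suffices to check the arc $H \to Ha$ (take $h=1$): reverse arc $Ha \to H$ exists iff $H \subseteq HaHa$ iff $aka \in H$ for some $k \in H$.

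**The plan.** I would first record the clean algebraic characterization above: $X/H$ symmetric $\iff \exists k \in H$, $aka \in H$. Then I would show this forces a contradiction with the structure already established, namely $H \cap a^{-1}Ha = \{1\}$ from \eqref{eq:conj}. The idea: if $aka \in H$ write $aka = h_0$; I want to manipulate this together with \eqref{eq:conj} and the fact that $A = H \cup Ha$ is a genuine 2-atom (so $a \notin H$, and $A \neq a^{-1}A$). A natural move is to consider the element $b = ak$ or $a^2$-type products and see that symmetry lets us produce a nontrivial element in $H \cap a^{-1}Ha$, or alternatively that it makes $A$ left-periodic or of the forbidden geometric-progression shape.

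**Executing the contradiction.** Here is the concrete route I expect to work. Suppose $aka = h_0 \in H$ for some $k \in H$. Then $aka = h_0$ gives $ka = a^{-1}h_0$, so $a^{-1} h_0 \in Ha$, meaning $a^{-1}h_0 a^{-1} = k \in H$. Thus $a^{-1} h_0 a^{-1} \in H$ as well, i.e. the same relation holds for $a^{-1}$. Rewriting $aka=h_0$ as $a(ka)=h_0$ and $k = a^{-1}h_0 a^{-1}$, substitute back: $k = a^{-1}h_0 a^{-1}$ and $h_0 = aka$ are two expressions; combining, $h_0 = a(a^{-1}h_0 a^{-1})a = h_0$, consistent but empty. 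So I need to extract more. The key extra input is that the reverse arcs must respect arc-transitivity globally and the degree count: symmetry makes $X/H$ an undirected $|H|$-regular graph, and then applying Lemma~\ref{lem:karc} with the \emph{antisymmetric} bound unavailable, I must check whether the symmetric case still violates \eqref{eq:2h-1}. I suspect the real proof does \emph{not} rely on connectivity for the symmetric case but rather derives a purely group-theoretic contradiction: symmetry plus \eqref{eq:conj} forces $|H| \le$ some small value or forces $A$ to be periodic.

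**The main obstacle.** The hard part is converting the combinatorial symmetry condition into an algebraic identity and then contradicting either \eqref{eq:conj} ($H \cap a^{-1}Ha = \{1\}$) or the 2-atom minimality. I would first nail down the equivalence ``$X/H$ symmetric $\iff aHa \cap H \neq \emptyset$'', then argue that an element $aka \in H$ with $k \in H$ yields, via $a(k a k)a \in \ldots$ iterating, either a nontrivial element of $H \cap a^{-1}Ha$ (contradicting \eqref{eq:conj}) or shows $a$ normalizes a coset structure making $A = H \cup Ha$ left-periodic by a larger subgroup (contradicting that $H$ is the full left-stabilizer and that $A$ is not a subgroup). I expect the cleanest contradiction to come from showing symmetry implies $a^2 \in H \cup Ha \cdot(\text{something})$ that collapses $A$; pinning down exactly which of these collapses occurs, and handling it uniformly rather than case-by-case in $|H|$, is where the genuine work lies.
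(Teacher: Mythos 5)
Your opening reduction is correct and matches the paper's first step: symmetry of the arc-transitive graph $X/H$ is equivalent to $aHa\cap H\neq\emptyset$, i.e.\ (after replacing $a$ by $ka$ for a suitable $k\in H$, which does not change $A=H\cup Ha$) to $a^2\in H$. But from that point on there is a genuine gap: you never produce the contradiction, and you say so yourself (``pinning down exactly which of these collapses occurs \dots is where the genuine work lies''). Moreover, the route you guess at --- a purely group-theoretic contradiction between $aka\in H$ and \eqref{eq:conj}, or a forced periodicity of $A$ --- cannot work on its own: there exist groups with a subgroup $H$ and an element $a$ satisfying both $a^2\in H$ and $H\cap a^{-1}Ha=\{1\}$ (e.g.\ $G=S_3$, $H=\{1,(12)\}$, $a=(13)$), so no manipulation of these two relations alone can yield a contradiction. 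The contradiction must involve $S$ itself, and it does use the arc-connectivity of $X/H$ --- exactly the input you ``suspect'' the proof does not need.

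The paper's argument runs as follows. With $a^2\in H$, Lemma~\ref{lem:stable} shows that any set $Hx\cup a^{-1}Hx$ is invariant under left multiplication by $a$. Let $C=G\setminus HS$. From $HS=AS=HaS$ (equation \eqref{eq:hs}) one gets that no element of $a^{-1}C$ lies in $S$. Now Lemma~\ref{lem:karc} gives that arc $2$-atoms of $X/H$ have cardinality $2$, hence $\lambda_2(X/H)=2|H|-2$; since $HS$ is $2$-separating in $X/H$ (remark after Theorem~\ref{thm:2coset}), the set $a^{-1}C$ must meet at least $2|H|-2$ of the $H$-cosets inside $HS$. Combined with $|HS|=|S|+2|H|-1$, this pins down the complement of $S$ as $\overline{S}=C\cup a^{-1}C\cup E$ with $|E|\leq 1$. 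Since $a\left(C\cup a^{-1}C\right)=C\cup a^{-1}C$ by Lemma~\ref{lem:stable}, it follows that $|\{1,a\}S|\leq |S|+1$, so $\{1,a\}$ is a $2$-fragment of cardinality $2$, contradicting the minimality of the $2$-atom $A$ (recall that in this section $\alpha_2(S)=2|H|\geq 4$). Your instinct that the collapse should come from a small set like $\{1,a\}$ beating $A$ as a fragment points at the right target, but the counting step identifying $\overline{S}$ with $C\cup a^{-1}C$ up to one element --- which is where the connectivity bound enters --- is the missing idea, and without it the proof does not close.
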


To prove Lemma~\ref{lem:symmetric} we will need the following easy fact.

\begin{lemma}\label{lem:stable}
  When $a^2\in H$, then for any $x\in G$, the set $T=Hx\cup a^{-1}Hx$
  is stable by left multiplication by $a$, i.e. $aT=T$.
\end{lemma}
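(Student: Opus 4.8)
The plan is to prove this directly by computing $aT$ and observing that the hypothesis $a^2\in H$ forces the two left cosets appearing in $T$ to be interchanged by left multiplication by $a$. Write $a^2=h$ with $h\in H$. The key identity I would establish first is that the single left coset $aH$ coincides with $a^{-1}H$: since $a = a^{-1}a^2 = a^{-1}h$, we have
$$
aH = a^{-1}hH = a^{-1}H,
$$
using $hH=H$ because $h\in H$.

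Granted this, the computation is immediate. Applying left multiplication by $a$ to $T=Hx\cup a^{-1}Hx$ and distributing over the union gives
$$
aT = aHx \cup a\,a^{-1}Hx = aHx \cup Hx.
$$
Now I substitute the identity $aHx = a^{-1}Hx$ (which follows from $aH=a^{-1}H$ by right-multiplying both sides by $x$) to obtain
$$
aT = a^{-1}Hx \cup Hx = T,
$$
which is exactly the claimed stability.

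There is no genuine obstacle here: the statement is flagged as an easy fact and the proof is a one-line coset manipulation. The only point requiring the slightest care is the passage $aH=a^{-1}H$, which relies precisely on $a^2\in H$ and would fail if that hypothesis were dropped; this is where the assumption of Lemma~\ref{lem:stable} is used. I would therefore present the proof as the short computation above, isolating the identity $aH=a^{-1}H$ as the single substantive step.
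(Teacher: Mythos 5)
Your proof is correct and follows exactly the paper's argument: the paper likewise writes $a^2=h\in H$, deduces $a=a^{-1}h$, and concludes $aHx=a^{-1}Hx$, leaving the final union computation $aT=aHx\cup Hx=T$ implicit. Your write-up merely makes that last step and the use of $hH=H$ explicit, which is fine.
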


\begin{proof}
  We have $a^2=h$ for some $h\in H$, hence $a=a^{-1}h$, therefore $aHx=a^{-1}Hx$.
\end{proof}

\begin{proof}[Proof of Lemma~\ref{lem:symmetric}]
   First note
that, by changing $a$ to $ha$ for some $h\in H$ if need be, $X/H$ is symmetric can
be taken to mean that $a^2\in H$.

 Let $C$ be the complement of $HS$ in $G$. Since $HS=HaS$, no element of
  $a^{-1}C$ can be in $S$. 
  Now, by Lemma~\ref{lem:karc} the arc $2$--atoms of $X/H$ have cardinality
  $2$ and $\lambda_2(X/H) = 2|H|-2$. By the remark following Theorem~\ref{thm:2coset}, the set $HS$ must
  define a $2$-separating subset of vertices of $X/H$,
  therefore
  $a^{-1}C$ intersects at least $2|H|-2$ different $H$-cosets of
  $HS$. Since $|HS|=|AS|=|S| + 2|H|-1$, we have that the complement of $S$
  equals
   $$\overline{S} = C\cup a^{-1}C \cup E$$
  where $E$ is either the empty set or a single element. But since,
  by Lemma~\ref{lem:stable}, $a(C\cup a^{-1}C) = C\cup a^{-1}C$,
  we have that $\{1,a\}S = S$ (if $E=\emptyset$) or
  $|\{1,a\}S|=|S|+1$ (if $|E|=1$) so that $A$ cannot be a $2$-atom of
  $S$. 
\end{proof}

This concludes the proof of Proposition~\ref{prop:lambda3} in the case $|H|=3,4,5$.

\subsection{Proof of Proposition~\ref{prop:lambda3} in the case
  $|H|=2$}

We are now dealing with a graph $X/H$ of degree $|H|=2$ that must be
antisymmetric by Lemma~\ref{lem:symmetric}. We assume that $HS$
defines a $3$-separating set of $X/H$ and work towards a contradiction.
Inequality \eqref{eq:2h-1}
tells us that we must have $\lambda_3\leq 2|H|-1=3$. Non-trivial
arc-transitive graphs with these parameters do exist however and we do
not have a contradiction directly. We shall therefore consider also
arc $4$--connectivity. By pure graph-theoretic arguments we will
obtain that we must have $\lambda_4\geq 4$ which will mean that $HS$
cannot be $4$--separating in $X/H$ otherwise we would contradict
\eqref{eq:2h-1}.
We will then be left with the case when $HS$ is $3$--separating but
not $4$--separating, meaning that $HS$ consists of either three cosets
modulo $H$ or the complement of three cosets. We will then conclude
the proof of Proposition~\ref{prop:lambda3} by excluding these cases separately.

Notice that if $X/H$ contains
no triangles (with any orientation) then subsets of $3$ or $4$
vertices must have at least $4$ outgoing arcs, so that $\lambda_3\geq
4$ since arc $3$--atoms are of cardinality $3$ or $4$ (Lemma~\ref{lem:karc}). We may therefore
assume that every edge of $X/H$ is contained (by arc-transitivity) in
an oriented triangle:

      \begin{figure}[h]
        \centering
         \begin{tikzpicture}[scale=1.4]
         \tikzstyle{every node}=[draw,circle,fill=black,minimum size=4pt,
                            inner sep=0pt]
          \draw (120:1cm) node (y) [label=left:$y$] {} -- (0,0) node (x)
          [label=below:$x$]{} -- (60:1cm) node (z) [label=right:$z$]
          {};
          \begin{scope}[line width=1pt]
          \draw (z) -- (y) -- (x) -- (z);
          \draw[-to, shorten >=0.5cm] (z) -- (y);
          \draw[-to, shorten >=0.5cm] (y) -- (x);
          \draw[-to, shorten >=0.5cm] (x) -- (z);
          \end{scope}
       \end{tikzpicture}
      \end{figure}
      

Indeed, both the in- and the out-neighbourhood of a vertex are
vertex-transitive digraphs on two vertices, such a neighbourhood
cannot therefore
contain an edge otherwise it would contain also the reverse edge.
Summarizing:

\begin{lemma}\label{lem:triangle}
 If $\lambda_3(X/H)\leq 3$ then every edge of $X/H$ belongs to an
oriented triangle. Furthermore arc $3$-atoms are of cardinality $3$
and are triangles.
\end{lemma}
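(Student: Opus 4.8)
The plan is to combine the counting identity $e(C)=2|C|-(\text{number of arcs internal to }C)$, valid because $X/H$ is $|H|=2$-regular, with the first assertion of Lemma~\ref{lem:karc}, which guarantees that an arc $3$-atom has cardinality $3$ or $4$, so that $\lambda_3(X/H)$ is attained on a set of one of these two sizes. For the first claim I would argue contrapositively. Suppose $X/H$ had no triangle in any orientation. Since $X/H$ is antisymmetric, the arcs internal to a vertex set are exactly the edges of the simple graph it induces, so Mantel's theorem bounds their number by $2$ on three vertices and by $4$ on four vertices. The identity above then yields $e(C)\ge 6-2=4$ when $|C|=3$ and $e(C)\ge 8-4=4$ when $|C|=4$; as the arc $3$-atom realises $\lambda_3(X/H)$ and has one of these cardinalities, we would get $\lambda_3(X/H)\ge 4$, contradicting the hypothesis $\lambda_3(X/H)\le 3$. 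Hence some triangle exists, and by arc-transitivity its image under a suitable automorphism passes through any prescribed arc.

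It then remains to upgrade ``triangle'' to ``oriented triangle''. Here I would use that arc-transitivity makes the stabiliser of any vertex $x$ act transitively both on its out-neighbourhood $N^+(x)$ and on its in-neighbourhood $N^-(x)$, so each of these two-vertex induced subdigraphs is vertex-transitive. A vertex-transitive digraph on two vertices containing a single arc would, under the automorphism swapping the two vertices, also contain the reverse arc, which antisymmetry forbids; thus $N^+(x)$ and $N^-(x)$ carry no internal arc. A transitive triangle would place its two source-side vertices inside $N^+(x)$ of its source (equivalently its two sink-side vertices inside $N^-$ of its sink) joined by an arc, now impossible. Consequently every triangle, and in particular the triangle through each arc, is an oriented $3$-cycle.

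For the ``furthermore'' clause I would first pin down $\lambda_3(X/H)=3$. An oriented triangle $C_0$ has $e(C_0)=6-3=3$ and is $3$-separating, because $HS$ occupies at least three cosets and, being $3$-separating, leaves at least three more, so $|V(X/H)|\ge 6$; this gives $\lambda_3\le 3$. Conversely, a three-vertex set has at most three internal arcs, and a four-vertex set cannot be a complete tournament (some vertex would then have out-degree $2$ inside it, hence its two global out-neighbours would be joined by an arc, contradicting the neighbourhood fact), so it has at most five internal arcs; thus $e(C)\ge 3$ on both admissible cardinalities and $\lambda_3\ge 3$. With $\lambda_3=3$ established, $C_0$ is a genuine $3$-fragment of cardinality $3$, which forces every arc $3$-atom to have cardinality $3$; and any such cardinality-$3$ atom $C$ satisfies $6-(\text{internal arcs})=e(C)=3$, so it attains the maximal three internal arcs and is, being antisymmetric and free of transitive triangles, an oriented triangle.

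I expect the only genuinely delicate point to be the passage from an arbitrary triangle to an oriented one: it relies entirely on the vertex-transitivity of the one-step neighbourhoods that arc-transitivity supplies, combined with antisymmetry, and this is precisely the step that breaks down for larger degree, which is why the whole analysis is confined to $|H|=2$. Everything else is elementary arc-counting once the cardinality restriction from Lemma~\ref{lem:karc} has been invoked and the complete-tournament configuration on four vertices has been excluded.
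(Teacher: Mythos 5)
Your proof is correct and, for the main assertion, follows exactly the paper's route: the count $e(C)=2|C|-(\text{number of internal arcs})$ together with the Mantel-type bound on triangle-free $3$- and $4$-sets, the cardinality bound on arc $3$-atoms from Lemma~\ref{lem:karc}, and arc-transitivity plus the vertex-transitivity of the two-vertex in- and out-neighbourhoods to force all triangles to be oriented. Where you genuinely add something is the \emph{furthermore} clause: the paper merely asserts it (``Summarizing:'') after the discussion sketched above, leaving unaddressed the possibility of a cardinality-$4$ atom whose underlying graph is a $K_4^*$ with both triangles oriented --- such a set also has only $8-5=3$ outgoing arcs, so it is not excluded by the counting alone, and in the paper that configuration is only dispatched later, by the octahedron analysis of Lemma~\ref{lem:K4*}. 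Your argument closes this point cleanly and without circularity: you first pin down $\lambda_3=3$ (ruling out the $4$-vertex tournament via the neighbourhood fact), then observe that an oriented triangle is itself a $3$-separating set with exactly three outgoing arcs, hence a $3$-fragment; minimality of atoms then forces every arc $3$-atom to have cardinality $3$, and a $3$-set with $e(C)=3$ must induce three internal arcs, i.e.\ be an oriented triangle. This is a more complete treatment of the ``furthermore'' statement than the one the paper provides.
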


Let us denote call a $K_4^*$ an antisymmetric graph on $4$ vertices
with $5$ arcs (a $K_4$ with an edge removed).

\begin{lemma}\label{lem:K4*}
  If $X/H$ is not an octahedron when orientation is removed, then it contains no $K_4^*$.
 \end{lemma}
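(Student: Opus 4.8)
The plan is to prove the contrapositive: assuming $X/H$ contains a copy of $K_4^*$, I will show that its underlying undirected graph must be the octahedron. Throughout I use the standing properties of $X/H$ in the case $|H|=2$: it is connected, antisymmetric, arc--transitive (Lemma~\ref{ref:X/H}), and of out--degree $2$; being vertex--transitive it has in--degree $2$ as well, so each vertex $x$ has a two--element out--neighbourhood $N^+(x)$ and a two--element in--neighbourhood $N^-(x)$, both independent sets (the observation preceding Lemma~\ref{lem:triangle}). A first consequence is that every triangle of $X/H$ is cyclically oriented: in a non--cyclic triangle some vertex would have both its triangle--neighbours as out--neighbours, contradicting the independence of its out--neighbourhood.

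Next I label the $K_4^*$ by $\{1,2,3,4\}$ with the edge $\{3,4\}$ absent, so that its two triangles are $\{1,2,3\}$ and $\{1,2,4\}$, both cyclic and sharing the arc between $1$ and $2$. Orienting that arc as $1\to 2$ forces $2\to 3\to 1$ and $2\to 4\to 1$. In particular the arc $1\to 2$ lies in exactly two oriented triangles, through $3$ and through $4$. Since $X/H$ is arc--transitive, the number of oriented triangles through an arc is the same for every arc, hence equals $2$. Because the number of oriented triangles through an arc $u\to v$ is $|N^+(v)\cap N^-(u)|$ and both sets have size $2$, this yields the key relation
$$N^+(v)=N^-(u)\qquad\text{for every arc } u\to v.$$

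The final step is to propagate this relation from the $K_4^*$. Applying it to the arcs $2\to 3$, $2\to 4$, $3\to 1$, $4\to 1$ produces two further vertices, say $5$ and $6$, with $5\to 2$, $3\to 5$, $4\to 5$, $1\to 6$, $6\to 3$, $6\to 4$; the independence of the neighbourhoods together with antisymmetry forces $5$ and $6$ to be distinct from one another and from $1,2,3,4$. Continuing to apply the relation to the new arcs closes the construction on exactly the six vertices $\{1,\dots,6\}$: one computes the out--neighbourhoods
$$N^+(1)=N^+(5)=\{2,6\},\quad N^+(2)=N^+(6)=\{3,4\},\quad N^+(3)=N^+(4)=\{1,5\},$$
obtains the in--neighbourhoods symmetrically, and checks that every remaining application of the key relation is consistent. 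The underlying undirected graph then has precisely the three non--adjacent pairs $\{1,5\}$, $\{2,6\}$, $\{3,4\}$ and is therefore the octahedron $K_{2,2,2}$. As $X/H$ is connected, $X/H$ with its orientation removed is the octahedron, the excluded case, which proves the lemma.

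The main obstacle is the passage from the local hypothesis (a single $K_4^*$) to the global relation $N^+(v)=N^-(u)$: once arc--transitivity is used to promote ``the arc $1\to 2$ lies in two triangles'' to ``every arc lies in exactly two triangles'', the remainder is a finite, forced propagation. The only delicate points there are verifying that the two new vertices are genuinely new and that no later deduction either identifies two previously distinct vertices or demands a seventh vertex; both follow from antisymmetry and the independence of the in-- and out--neighbourhoods.
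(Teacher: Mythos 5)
Your proof is correct and follows essentially the same route as the paper's: both arguments use arc--transitivity to promote the fact that the shared arc of the $K_4^*$ lies in two oriented triangles to the statement that \emph{every} arc does, and then complete the graph by forced propagation (introducing exactly two new vertices) until it closes up as the octahedron on six vertices. Your reformulation of the two--triangle property as $N^+(v)=N^-(u)$ for every arc $u\to v$ is a slightly cleaner bookkeeping device than the paper's picture--driven completion, but the underlying argument is the same.
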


\begin{proof}
Suppose the $4$-vertex set $\{a,b,c,d\}$ forms a $K_4^*$, then
orientations must be as below since triangles must be oriented.

\begin{figure}[h]
  \centering
  \begin{tikzpicture}[scale=1.4,line width=1pt]
    \tikzstyle{every node}=[draw,circle,fill=black,minimum size=4pt,
                            inner sep=0pt]
     \draw (0,0) node (c) [label= below left:$c$] {} -- (1,0) node (b)
     [label= below:$b$] {} -- (2,0) node (y) [label= below right:$y$]
     {} -- ++(120:1cm) node (d) [label= right:$d$] {} -- ++(120:1cm)
     node (x) [label= above:$x$] {} -- ++(240:1cm) node (a) [label=
     left:$a$] {} -- (c);
     \draw (a) -- (b) -- (d) -- (a);
     \draw[style=dashed,-to] (x) to [out=-20,in=80]   (y);
     \draw[style=dashed,-to] (y) to [out=-140,in=-40] (c);
     \draw[style=dashed,-to] (c) to [out=100,in=-160] (x);
     \begin{scope}[draw, -to, shorten >=0.5cm]
       \draw (c) -- (b); \draw (b) -- (a); \draw (a) -- (c);
       \draw (d) -- (x); \draw (x) -- (a); \draw (a) -- (d);
       \draw (b) -- (y); \draw (y) -- (d); \draw (d) -- (b);
     \end{scope}
  \end{tikzpicture}
\end{figure}

Now we must have an arc $(x,a)$ and an arc $(b,y)$ with $x,y$ outside
the $K_4^*$. Note that we must have $x\neq y$ otherwise the outer
neighbourhood of $b$ would not be of degree $0$. Note further that
every arc, like arc $(b,a)$, must belong to two distinct triangles.
For arc $(x,a)$, the two triangles can only be $(x,a,c)$ and
$(x,a,d)$,
therefore we must have arcs $(c,x)$ and $(d,x)$. Similarly, we must
have arcs $(y,c)$ and $(y,d)$, as in the picture. But then the picture
can only be completed by arc $(x,y)$ and we have an octahedron.
\end{proof}

We now claim:

\begin{proposition}\label{prop:lambda4}
  If $X/H$ is $4$-separable, then $\lambda_4\geq 4$.
\end{proposition}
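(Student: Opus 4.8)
The plan is to suppose $\lambda_4 \le 3$ and reach a contradiction. I would first observe that $\lambda_4 \geq 3$ comes for free: applying Lemma~\ref{lem:karc} with $k=2$ to the antisymmetric graph $X/H$ of degree $d=|H|=2$ (legitimate since $2 \le 2d/3+1$), arc $2$-atoms are single arcs and $\lambda_2 = 2d - 1 = 3$; and since $\lambda_k$ is non-decreasing in $k$ (the minimisation ranges over a shrinking family of separating sets), $\lambda_4 \ge \lambda_3 \ge \lambda_2 = 3$. So it suffices to exclude $\lambda_4 = 3$. Assuming $\lambda_4 = 3$ squeezes $\lambda_3 = 3$, so Lemma~\ref{lem:triangle} applies: every arc of $X/H$ lies in an oriented triangle and arc $3$-atoms are triangles. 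Moreover $X/H$ is $4$-separable, so it has at least $8$ vertices; in particular it is not an octahedron and Lemma~\ref{lem:K4*} forbids a $K_4^*$.

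The quantitative engine is the bound $e_4(X/H) \le 4$. Since $|V(X/H)| \ge 8$, every set $C$ of four vertices is $4$-separating, hence $e(C) \ge \lambda_4 = 3$ and $C$ spans at most $2\cdot 4 - 3 = 5$ arcs; as a $K_4^*$ is excluded, $C$ spans at most $4$. Now let $F$ be an arc $4$-atom, so $4 \le |F| \le 6$ by Lemma~\ref{lem:karc} and $F$ spans exactly $2|F|-3$ arcs. If $|F|=4$ then $F$ spans $5$ arcs, i.e.\ a $K_4^*$: contradiction. If $|F|=5$ then $F$ spans $7$ arcs, but each arc lies in three of the five $4$-subsets of $F$, so $3\cdot 7 \le 5\cdot 4$, which is false.

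There remains the decisive case $|F|=6$, where $F$ spans $9$ arcs. Here I would argue on the underlying undirected $6$-vertex graph: every one of its $4$-subsets carries at most $4$ edges, and a short degree computation shows this forces the complement to be $2$-regular, so the graph is either $K_{3,3}$ or the triangular prism $\overline{C_6}$. The bipartite case is killed quickly: being triangle-free inside $F$, every internal arc must find its triangle outside $F$, which forces its tail to carry an outgoing boundary arc; since only three vertices can do so while all nine internal arcs need such a tail, some vertex would get internal out-degree $3>2$. For the prism the same analysis only constrains the three non-triangle (``matching'') arcs, so it survives the easy count. Instead I would use the triangle condition to show that the out-neighbourhood and the in-neighbourhood of $F$ coincide in a single set $B$ of three vertices, and then use arc-transitivity together with the fact that each arc lies in a unique triangle to reconstruct the ambient graph, forcing it to be the $9$-vertex Cayley graph of $\Z_3\times\Z_3$ with connection set $\{(1,0),(0,1)\}$ — the $3\times 3$ rook's graph with rows and columns oriented as directed triangles, in which $F$ is the union of two rows. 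But then $V(X/H) = F\cup B$ has only $9$ vertices, so $|V(X/H)\setminus F| = 3 < 4$ and $F$ is not $4$-separating, contradicting that it is a $4$-fragment.

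The main obstacle is exactly this rigidity step for the prism: unlike the earlier cases the contradiction is not local but global, coming from the fact that arc-transitivity together with the unique-triangle property closes the graph up at precisely nine vertices. I expect to carry it out by following the two triangles through each vertex, matching the three boundary vertices $B$ of $F$ to its interior by vertex-transitivity, and thereby proving that $B$ is itself a directed triangle all of whose outgoing arcs return into $F\cup B$; combined with the lower bound $\lambda_2=3$ and the exclusion of $K_4^*$ this pins $|V(X/H)|$ to $9$ and finishes the proof.
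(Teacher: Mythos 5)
Your overall strategy coincides with the paper's: bound $e_4(X/H)\leq 4$ by excluding $K_4^*$ (Lemma~\ref{lem:K4*}), run through the atom sizes $|F|\in\{4,5,6\}$ allowed by Lemma~\ref{lem:karc}, reduce the case $|F|=6$ to the two cubic graphs on six vertices, and eliminate $K_{3,3}$ and the prism separately. Within that frame, the parts you actually carry out are correct, and two of them are cleaner than the paper's versions: the double count of arcs over the five $4$-subsets in the case $|F|=5$ (the paper instead analyses the structure around a degree-$3$ vertex), and the head/tail count that kills $K_{3,3}$ (modulo a harmless slip between ``incoming'' and ``outgoing'' boundary arcs). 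The reduction $\lambda_4\geq\lambda_3\geq\lambda_2\geq 3$ by monotonicity is also a nice way to make the triangle hypothesis of Lemma~\ref{lem:triangle} available.

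The gap is the prism case, which you yourself flag as the main obstacle, and the tools you propose for it do not suffice. Once the three spoke edges of the prism have acquired their triangles through three distinct outside vertices $x,y,z$, every prism vertex has reached full degree $4$, and one checks that every edge of the configuration $F\cup\{x,y,z\}$ --- including the six edges joining $F$ to $\{x,y,z\}$ --- already lies in a triangle inside that configuration (e.g.\ the edge $x0$ lies in the triangle on $x,0,3$). Moreover this configuration contains no $K_4^*$, and no bound $\lambda_2=3$ or $\lambda_3=3$ is violated if each of $x,y,z$ sends its two remaining edges to brand-new vertices outside $F\cup\{x,y,z\}$. So ``each arc lies in a unique triangle'' plus $K_4^*$-exclusion plus $\lambda_2=3$ impose no further constraint whatsoever: nothing in your list forces the remaining arcs at $B=\{x,y,z\}$ back into $F\cup B$, and the graph need not close up at nine vertices. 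The paper needs one additional idea exactly here: the edges of the prism's two triangles lie on $4$-cycles (the square faces of the prism), hence by arc-transitivity \emph{every} edge of $X/H$ lies on a $4$-cycle; applying this to the edges joining $F$ to $x,y,z$, whose endpoints in $F$ are already saturated, is what forces $x,y,z$ to be pairwise adjacent, makes the graph $4$-regular on nine vertices, and contradicts the $4$-separability of $F$. (The distinctness of $x,y,z$ also requires the small connectivity argument given in the proof of Lemma~\ref{lem:F=6}, which you do not supply.) Without this $4$-cycle transfer, or an equivalent global case analysis, your reconstruction of the nine-vertex rook's graph is an assertion rather than a proof --- and it is precisely the crux of the proposition.
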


Since $4$-atoms have cardinality $4,5$ or $6$ by Lemma~\ref{lem:karc}, we prove
Proposition~\ref{prop:lambda4} by showing that subsets of $4,5$ or $6$
vertices have at least $4$ outgoing arcs. We do this by looking at the
non-oriented version of the graph, obtained from $X/H$ by forgetting
orientation, and showing that any subset of $4,5,6$ vertices must have
at least $8$ outgoing edges. Note that we may suppose that the graph
contains no $K_4^*$, since octahedrons are not $4$-separable. Since we
may also assume that every edge is included in a triangle
(Lemma~\ref{lem:triangle}) we can claim:

\begin{lemma}\label{lem:01}
  If $X/H$ is $4$-separable and contains triangles, then the
  neighbourhood $N(x)$ of any vertex $x$ is a $4$-vertex graph of
  degree $1$.
\end{lemma}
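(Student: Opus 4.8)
The plan is to determine the undirected structure of $N(x)$ completely and show it must be a perfect matching on four vertices. First I would fix the vertices. Since $X/H$ is arc-transitive and antisymmetric of degree $|H|=2$ (Lemma~\ref{ref:X/H}), it is regular of in- and out-degree $2$, and as it has no $2$-cycles the two out-neighbours $z_1,z_2$ and the two in-neighbours $y_1,y_2$ of $x$ are four \emph{distinct} vertices; thus $N(x)$ has exactly four vertices. Moreover, as already observed just before Lemma~\ref{lem:triangle}, the out-neighbourhood $\{z_1,z_2\}$ and the in-neighbourhood $\{y_1,y_2\}$ each span no arc, since a vertex-transitive digraph on two vertices carrying an arc would carry its reverse, contradicting antisymmetry. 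Hence $N(x)$ is a bipartite graph between $\{z_1,z_2\}$ and $\{y_1,y_2\}$, and proving the lemma amounts to showing that every one of these four vertices has degree exactly $1$.

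For the lower bound I would use that every edge lies in an oriented triangle (Lemma~\ref{lem:triangle}). The arc $x\to z_i$ sits in a directed triangle $x\to z_i\to w\to x$, which forces $w$ to be an in-neighbour of $x$; hence $z_i$ is adjacent in $N(x)$ to some $y_j$. Symmetrically, the arc $y_j\to x$ sits in a directed triangle $y_j\to x\to w\to y_j$, forcing $w$ to be an out-neighbour of $x$, so $y_j$ is adjacent to some $z_i$. Consequently all four vertices of $N(x)$ have degree at least $1$.

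For the upper bound I would invoke the absence of $K_4^*$: since $X/H$ is $4$-separable it is not an octahedron, so Lemma~\ref{lem:K4*} yields that $X/H$ contains no $K_4^*$. If some $y_j$ had degree at least $2$, i.e.\ were adjacent to both $z_1$ and $z_2$, then the four vertices $x,z_1,z_2,y_j$ would be pairwise adjacent except for the single pair $\{z_1,z_2\}$ (which spans no arc by the first paragraph); being a four-vertex antisymmetric subgraph with five arcs, this is exactly a $K_4^*$, a contradiction. Running the identical argument on $x,z_i,y_1,y_2$, where now the missing pair is $\{y_1,y_2\}$, rules out any $z_i$ of degree at least $2$. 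Combined with the lower bound, every vertex of $N(x)$ has degree exactly $1$, so $N(x)$ is a four-vertex graph of degree $1$ (a perfect matching), as asserted.

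The only point requiring care, rather than a genuine obstacle, is the passage between undirected adjacency and the oriented object $K_4^*$: because $X/H$ is antisymmetric, any four vertices exhibiting five adjacent pairs necessarily carry exactly five arcs and hence form a $K_4^*$ in the sense of Lemma~\ref{lem:K4*}, irrespective of how those arcs happen to be oriented. The vanishing of arcs inside $\{z_1,z_2\}$ and inside $\{y_1,y_2\}$ is precisely what supplies the one missing edge in each case, so once this observation is made the contradiction is immediate.
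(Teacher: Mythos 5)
Your proof is correct and follows essentially the same route the paper intends: the paper states Lemma~\ref{lem:01} as an immediate consequence of its preceding observations (the in- and out-neighbourhoods of a vertex span no arcs by arc-transitivity and antisymmetry, every edge lies in an oriented triangle, and a $4$-separable $X/H$ contains no $K_4^*$), and your argument simply makes those implicit details explicit. The only cosmetic quibble is that you cite Lemma~\ref{lem:triangle} (whose stated hypothesis is $\lambda_3\leq 3$) for ``every edge lies in an oriented triangle,'' whereas under the hypothesis ``contains triangles'' this fact comes from the arc-transitivity discussion immediately preceding that lemma --- but the substance is identical.
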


From the fact that $X/H$ contains no $K_4^*$ we obtain immediately:

\begin{lemma}\label{lem:F=4}
  If $F$ is a $4$-separating set of $X/H$ with $|F|=4$,
  then $F$ has at least $8$ outgoing edges.
\end{lemma}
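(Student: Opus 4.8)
The plan is to forget orientation entirely and work with the underlying undirected graph of $X/H$, exploiting its regularity together with the absence of $K_4^*$ established in Lemma~\ref{lem:K4*}. Since we are in the case $|H|=2$ and $X/H$ is antisymmetric, each vertex has out-degree $2$ and in-degree $2$; a coincidence between an out-neighbour and an in-neighbour would produce a $2$-cycle, contradicting antisymmetry, so every vertex meets exactly four distinct undirected edges. Hence the undirected graph underlying $X/H$ is simple and $4$-regular, and the desired count of outgoing edges can be extracted from the degree sum.

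First I would record, for any vertex set $F$ with $|F|=4$, the identity
$$\sum_{v\in F}\deg(v) = 16 = 2\,e_{\mathrm{int}}(F) + e_{\mathrm{out}}(F),$$
where $e_{\mathrm{int}}(F)$ counts edges with both endpoints in $F$ and $e_{\mathrm{out}}(F)$ counts outgoing edges; each internal edge is counted twice and each outgoing edge once. Rearranging gives $e_{\mathrm{out}}(F) = 16 - 2\,e_{\mathrm{int}}(F)$, so the assertion $e_{\mathrm{out}}(F)\geq 8$ is exactly equivalent to the bound $e_{\mathrm{int}}(F)\leq 4$.

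Next I would bound $e_{\mathrm{int}}(F)$ using the fact that $X/H$ contains no $K_4^*$. Four vertices carry at most six undirected edges; exactly five edges form a $K_4$ with one edge deleted, which is a $K_4^*$, and six edges form a $K_4$, which contains a $K_4^*$ after removing any single edge. Since we have already excluded the octahedron (which is not $4$-separable) and Lemma~\ref{lem:K4*} then forbids $K_4^*$, neither the five- nor the six-edge configuration can occur inside $F$. Therefore $e_{\mathrm{int}}(F)\leq 4$, and the displayed identity yields $e_{\mathrm{out}}(F)\geq 8$.

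There is no serious obstacle here; the argument is a short counting step once the two structural inputs are in place. The only point needing care is the bookkeeping between the oriented notion of $K_4^*$ in Lemma~\ref{lem:K4*} and the undirected edge count: one must observe that, with every triangle cyclically oriented as forced by Lemma~\ref{lem:triangle}, any five or six undirected edges on four vertices necessarily realise a $K_4^*$, so that the undirected bound $e_{\mathrm{int}}(F)\leq 4$ is genuinely implied by the absence of an oriented $K_4^*$.
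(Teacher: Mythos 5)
Your proof is correct and takes essentially the same route as the paper: the paper deduces this lemma ``immediately'' from the absence of $K_4^*$ (Lemma~\ref{lem:K4*}, applicable because a $4$-separable graph cannot be an octahedron), working in the unoriented $4$-regular graph exactly as you do. Your degree-sum identity $e_{\mathrm{out}}(F)=16-2\,e_{\mathrm{int}}(F)$ and the observation that $e_{\mathrm{int}}(F)\geq 5$ forces a $K_4^*$ simply spell out the counting step the paper leaves implicit.
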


We now claim:

\begin{lemma}\label{lem:F=5}
 If $F$ is a $4$-separating set of $X/H$ with $|F|=5$,
  then $F$ has at least $8$ outgoing edges.
\end{lemma}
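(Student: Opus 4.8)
The plan is to work entirely in the underlying undirected graph of $X/H$, which is $4$-regular: since $X/H$ is antisymmetric of out-degree $|H|=2$ and vertex-transitivity forces in-degree to equal out-degree, forgetting orientation turns the two out-neighbours and two in-neighbours of each vertex into four distinct undirected neighbours. For a $5$-element vertex set $F$, counting the contribution of each vertex to the degree sum gives that the number of outgoing edges equals $4\cdot 5 - 2i(F) = 20 - 2i(F)$, where $i(F)$ denotes the number of edges with both endpoints in $F$. Hence ``$F$ has at least $8$ outgoing edges'' is equivalent to the bound $i(F)\le 6$, and this is what I would establish.

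The key input is that every $4$-element subset of vertices spans at most $4$ undirected edges. Indeed, a $4$-set spanning $6$ edges would be a $K_4$, but then any of its vertices would have three pairwise adjacent neighbours, contradicting Lemma~\ref{lem:01}, which asserts that each neighbourhood is a $1$-regular graph; and a $4$-set spanning $5$ edges would be a $K_4^*$ once orientation is restored, since all its triangles are oriented by Lemma~\ref{lem:triangle}, and this is excluded by Lemma~\ref{lem:K4*} (the octahedron being already discarded as it is not $4$-separable). This is exactly the bound already used for the case $|F|=4$ in Lemma~\ref{lem:F=4}.

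From here the argument is a short double count over the five $4$-subsets of $F$. Each internal edge of $F$ lies in exactly three of these $4$-subsets, namely those obtained by deleting one of the three vertices not on the edge, so summing the internal-edge counts of the five $4$-subsets gives $3i(F)$. On the other hand each $4$-subset contributes at most $4$ by the previous paragraph, so the total is at most $5\cdot 4 = 20$. Therefore $3i(F)\le 20$, which forces $i(F)\le 6$ since $i(F)$ is an integer, and the desired bound on outgoing edges follows.

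I expect the only delicate point to be the $4$-subset bound, and specifically the need to rule out both $K_4$ and $K_4^*$ rather than just the latter: the $K_4^*$ exclusion comes directly from Lemma~\ref{lem:K4*}, but the $K_4$ case must be handled through the neighbourhood structure of Lemma~\ref{lem:01}. Once the uniform bound ``$4$ vertices span at most $4$ edges'' is in place, the averaging over $4$-subsets is completely routine.
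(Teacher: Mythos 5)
Your proof is correct, but it takes a genuinely different route from the paper's. The paper argues by contradiction directly on the degree sequence inside $F$: if $F$ had at least $7$ internal edges, Lemma~\ref{lem:01} first rules out a vertex of internal degree $4$ (its three remaining edges would have to sit in the $1$-regular neighbourhood $N(x)$, which holds only $2$ edges), so some vertex $x$ has internal degree exactly $3$; a counting argument then forces one edge $uv$ inside $N(x)\cap F$ and forces the fifth vertex $y$ to be joined to all three neighbours of $x$, producing an explicit $K_4^*$ on $\{x,u,v,y\}$ and contradicting Lemma~\ref{lem:K4*}. You instead prove the uniform local bound that any $4$ vertices span at most $4$ undirected edges (a $K_4$ is killed by Lemma~\ref{lem:01}, a $K_4^*$ by Lemma~\ref{lem:K4*}, the octahedron being excluded by $4$-separability exactly as in Lemma~\ref{lem:F=4}) and then double count over the five $4$-subsets of $F$: each internal edge lies in exactly $3$ of them, so $3i(F)\leq 5\cdot 4=20$, giving $i(F)\leq 6$ and hence $20-2i(F)\geq 8$ outgoing edges. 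Your averaging argument is shorter, avoids the case analysis, and cleanly reuses the $|F|=4$ bound as a black box; its limitation is that it is specific to $|F|=5$ — for $|F|=6$ the same count gives only $6i(F)\leq\binom{6}{4}\cdot 4=60$, i.e.\ $i(F)\leq 10$, which is too weak for Lemma~\ref{lem:F=6} — so the paper's structural hunt is the style of argument that carries over to the remaining case, while yours is the more economical one here.
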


\begin{proof}
  Otherwise $F$ contains at least $7$ edges.
  If $F$ contains a vertex $x$ of degree $4$ then the remaining $3$ edges
  of $F$ must be in $N(x)$. But $N(x)$ contains at most $2$ edges by
  Lemma~\ref{lem:01}. Therefore $F$ cannot contain a vertex of degree $4$ and
  since it contains $7$ edges it must contain
  a vertex $x$ of degree~$3$. Let $y$ be the vertex of $F$ that is not
  in $N(x)$.

  \begin{figure}[h]
    \centering
    \begin{tikzpicture}
    \tikzstyle{every node}=[draw,circle,fill=black,minimum size=4pt,
                            inner sep=0pt]

     \draw (0,0) node (x) [label=left:$x$] {}
        -- (0,1) node (u) [label=left:$u$] {}
        -- (1,0) node (v) [label=right:$v$] {}
        -- (x)
        -- (1,1) node (w) {};
     \node (y) at (1.7,1.7) [label=right:$y$] {};

     \draw[style=dashed] (u) -- (y) -- (v) (w) -- (y);

  \end{tikzpicture}
  \end{figure}

There must be an edge between some two neighbours, say $u$ and $v$, of
$x$ in $F$, otherwise we can put at most six edges in $F$. Since there
cannot be any other edge in $N(x)\cap F$, the vertex $y$ must be
connected to all $3$ neighbours of $x$ in $F$. But then $x,u,v,y$ make
up a $K_4^*$, contradicting Lemma~\ref{lem:K4*}.
\end{proof}

\begin{lemma}\label{lem:F=6}
 If $F$ is a $4$-separating set of $X/H$ with $|F|=6$,
  then $F$ has at least $8$ outgoing edges.
\end{lemma}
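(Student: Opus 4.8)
The plan is to show that a $4$-separating set $F$ with $|F|=6$ and at most $7$ edges leads to a contradiction, paralleling the analysis of Lemmas~\ref{lem:F=4} and \ref{lem:F=5}, but with one more vertex to account for. Throughout I work in the underlying undirected graph obtained from $X/H$ by forgetting orientation; recall that every edge lies in a triangle (Lemma~\ref{lem:triangle}), that the graph contains no $K_4^*$ (Lemma~\ref{lem:K4*}, since an octahedron is not $4$-separable), and that by Lemma~\ref{lem:01} the neighbourhood of every vertex is a $4$-vertex graph of degree $1$, i.e. it contains at most two edges and these form a matching. Suppose for contradiction that $F$ has at most $7$ outgoing edges; since the graph is $4$-regular (degree $|H|=2$ in $X/H$ gives undirected degree at most $4$, and in fact degree exactly $4$ after doubling), the sum of degrees inside $F$ is $4\cdot 6 - e(F) \geq 24-7$, so $F$ induces at least $\lceil 17/2\rceil = 9$ edges internally.

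The key step is a case analysis on the maximum internal degree $\delta$ of a vertex $x\in F$. First I would rule out $\delta=4$: if $x$ has all four of its neighbours inside $F$, then the sixth vertex $y$ of $F$ together with $N(x)$ must absorb the remaining internal edges, but $N(x)$ carries at most two edges (Lemma~\ref{lem:01}) and $y$ has degree at most $4$; a counting of how many internal edges can be placed, combined with the no-$K_4^*$ condition, forces more than $7$ outgoing edges. The main work is the case $\delta=3$: here $x$ has three neighbours $u,v,w$ in $F$ and there are two further vertices, say $y,z$, not adjacent to $x$. At most one edge can appear among $\{u,v,w\}$ (two would give a $K_4^*$ on $x$ together with the endpoints, as in the proof of Lemma~\ref{lem:F=5}, or violate the degree-$1$ structure of $N(x)$). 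I would then count: the edges incident to $x$ contribute $3$; the edges inside $\{u,v,w,y,z\}$ must supply the remaining internal edges, and each of $u,v,w$ can connect to $y$ and $z$ only in limited ways before creating a $K_4^*$ with $x$. Carefully bounding these, one finds the induced edge count cannot reach $9$ without producing a forbidden $K_4^*$ or a vertex of degree exceeding what Lemma~\ref{lem:01} permits.

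The hard part will be the bookkeeping in the $\delta=3$ case, where one must simultaneously respect three constraints: every edge lies in a triangle, no $K_4^*$ occurs, and each neighbourhood is a degree-$1$ graph. The cleanest route is probably to observe that if two of $x$'s neighbours, say $u$ and $v$, are joined by an edge, then the triangle $xuv$ forces $u$ and $v$ to have their full quota of $N(x)$-edges used up, so any common neighbour of $u$ and $v$ outside $\{x\}$ would complete a $K_4^*$; this pins down the adjacencies of $y$ and $z$ tightly and yields the needed lower bound on outgoing edges.

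\begin{proof}
  By the preceding lemmas we may assume the undirected graph has every
  edge in a triangle, contains no $K_4^*$, and that $N(v)$ has degree
  $1$ for every vertex $v$. Suppose $F$ has at most $7$ outgoing edges.
  As the graph is $4$-regular, $F$ induces at least
  $(4\cdot 6 - 7)/2 \geq 9$ edges. Let $x\in F$ have maximum internal
  degree $\delta$.

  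If $\delta = 4$, then all four neighbours of $x$ lie in $F$, and the
  sixth vertex $y$ of $F$ together with $N(x)$ must carry the remaining
  internal edges. Since $N(x)$ contains at most $2$ edges and $y$ is
  joined to at most the $4$ neighbours of $x$, the number of internal
  edges is at most $4+2+4 = 10$ in the most generous count; but any
  configuration attaining $9$ internal edges forces two neighbours of
  $x$ joined by an edge and a common further neighbour, producing a
  $K_4^*$, a contradiction.

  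Hence $\delta = 3$. Let $u,v,w$ be the neighbours of $x$ in $F$ and
  let $y,z$ be the two vertices of $F$ not adjacent to $x$. By
  Lemma~\ref{lem:01} at most one edge joins two of $u,v,w$. The three
  edges at $x$ together with the edges among $\{u,v,w,y,z\}$ must total
  at least $9$, so $\{u,v,w,y,z\}$ induces at least $6$ edges. Each of
  $u,v,w$ meets $y$ or $z$ only subject to the no-$K_4^*$ rule: if $u$
  and $v$ were both joined to $x$ and to a common vertex of
  $\{y,z\}$, we would obtain a $K_4^*$ on $\{x,u,v\}$ and that common
  vertex. A direct check of the admissible adjacencies then shows the
  induced edges cannot reach $6$ without creating a $K_4^*$ or forcing
  some neighbourhood to exceed degree $1$, contradicting
  Lemmas~\ref{lem:K4*} and~\ref{lem:01}. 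Thus $F$ must have at least
  $8$ outgoing edges.
\end{proof}
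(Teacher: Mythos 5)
Your reduction to at least $9$ internal edges and your $\delta=4$ case are sound and essentially match the first step of the paper's proof (with $x$ of full internal degree, $N(x)$ is a perfect matching by Lemma~\ref{lem:01}, and the sixth vertex must attach to both endpoints of a matched pair, creating a $K_4^*$). The genuine gap is in the $\delta=3$ case. Since $9$ internal edges with maximum internal degree $3$ force the subgraph induced on $F$ to be exactly $3$-regular, the configurations you must exclude are precisely the two cubic graphs on six vertices, namely $K_{3,3}$ and the triangular prism --- and $K_{3,3}$ passes every test you allow yourself. Indeed, take $K_{3,3}$ with your $x$ on one side: then $u,v,w$ (the other side) span no edge, $y,z$ are the two remaining vertices of $x$'s side, each adjacent to all of $u,v,w$; this gives exactly $6$ edges inside $\{u,v,w,y,z\}$, contains no $K_4^*$ (the graph is triangle-free), and every internal neighbourhood is an independent set, hence of degree at most $1$. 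So your claimed ``direct check'' that $6$ edges cannot be reached without violating Lemma~\ref{lem:K4*} or Lemma~\ref{lem:01} is false.

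The underlying reason is that this case cannot be settled by constraints internal to $F$: one must invoke the condition that every edge of $X/H$ lies in a triangle, and for these two configurations the third vertex of such a triangle necessarily lies \emph{outside} $F$. This is exactly what the paper does. For $K_{3,3}$, each vertex of $F$ has exactly one external edge, so triangle-freeness forces both endpoints of every internal edge to share their unique external neighbour, hence all six vertices would share a single external neighbour of degree at least $6$, impossible since $X/H$ has degree $4$. For the prism, the three ``vertical'' edges acquire three external apexes, which are shown to be distinct using the bound on $\lambda_3$, after which the structure closes up into a graph on only $9$ vertices, contradicting the $4$-separability of $F$. Your argument needs this analysis of edges leaving $F$ (or some substitute for it); as written, the $\delta=3$ case does not go through.
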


\begin{proof}
  Otherwise $F$ contains at least $9$ edges.
  Suppose first that $F$ contains a vertex $x$ of degree $4$ in $F$.
  If there are no edges in $N(x)$ then $F$ clearly cannot contain $9$
  edges, therefore the neighbourhood of $x$ is of degree $1$ and has
  the structure below:

  \begin{figure}[h]
    \centering
    \begin{tikzpicture}
    \tikzstyle{every node}=[draw,circle,fill=black,minimum size=4pt,
                            inner sep=0pt]

     \node (x) at (0,0) [label=right:$x$] {};

     \node (y) at (45:1cm)  [label=above right:$y$] {};
     \node (z) at (135:1cm) [label=above left:$z$] {};
     \node (u) at (225:1cm) [label=below left:$u$] {};
     \node (v) at (315:1cm) [label=below right:$v$] {};

     \draw (x) -- (y) -- (z) -- (x);
     \draw (x) -- (u) -- (v) -- (x);

  \end{tikzpicture}
  \end{figure}

But then, to fit three extra edges in $F$ from the remaining vertex $w$ of
$F$ that is neither $x$ nor in $N(x)$, we must connect $w$ to either
$x$ and $y$ or to $u$ and $v$, which creates a $K_4^*$, contradicting
Lemma~\ref{lem:K4*}.

Therefore, if $F$ contains nine edges, the only possibility left is
for $F$ to be regular of degree $3$. There are only two non-isomorphic
graphs of degree $3$ on six vertices which are:

\begin{figure}[h]
  \centering
  \begin{tikzpicture}[scale=1.4]
    \tikzstyle{every node}=[draw,circle,fill=black,minimum size=4pt,
                            inner sep=0pt]

     \foreach \x in {0,1,...,5}
    {
        \node (\x) at (30+60*\x:1cm) {};
    };

    \draw (0) -- (1) -- (2) -- (3) -- (4) -- (5) -- (0);
    \draw (0) -- (3) (1) -- (4) (2) -- (5);
  \end{tikzpicture}
\hspace{1cm}
  \begin{tikzpicture}
  \tikzstyle{every node}=[draw,circle,fill=black,minimum size=4pt,
                            inner sep=0pt]

   \foreach \x in {0,1,2}
    {
        \node (\x) at (30+120*\x:0.8cm) {};
    };
    \foreach \x in {3,4,5}
    {
        \node (\x) at (30+120*\x:2cm) {};
    };
    \draw (0) -- (1) -- (2) -- (0);
    \draw (3) -- (4) -- (5) -- (3);
    \draw (0) -- (3) (1) -- (4) (2) -- (5);
  \end{tikzpicture}
\end{figure}

The first graph contains no induced triangles: but we have seen that
every edge of $X/H$
must be included in some triangle. This implies in the case of
the first graph that every one of
its edges has both its endpoints connected to a common vertex. But
since the degree of $X/H$ is only $4$, this vertex must be the same
for every edge of $F$, which is not possible for a maximum degree $4$.
This excludes the first graph.

In the case of the second graph, again, because every edge must belong
to a triangle, we must have the following picture:

\begin{figure}[h]
  \centering
   \begin{tikzpicture}
  \tikzstyle{every node}=[draw,circle,fill=black,minimum size=4pt,
                            inner sep=0pt]

   \foreach \x in {0,1,2}
    {
        \node (\x) at (30+120*\x:0.8cm) {};
    };
    \foreach \x in {3,4,5}
    {
        \node (\x) at (30+120*\x:2cm) {};
    };
    \draw (0) -- (1) -- (2) -- (0);
    \draw (3) -- (4) -- (5) -- (3);
    \draw (0) -- (3) (1) -- (4) (2) -- (5);

    \draw (3) -- ++(120:1.4cm) node (6) [label=above:$x$] {} -- (0);
    \draw (4) -- ++(240:1.4cm) node (7) [label=left:$y$] {} -- (1);
    \draw (5) -- ++(0:1.4cm) node (8) [label= below right:$z$] {} -- (2);

    \draw[style=dashed] (8) to [out=30,in=0] (6);
    \draw[style=dashed] (6) to [out=160,in=100] (7);
    \draw[style=dashed] (7) to [out=-100,in=-140] (8);
  \end{tikzpicture}
\end{figure}

Note that the vertices $x,y$ and $z$ must be distinct, because if two
of them are equal we obtain a set of $8$ vertices with two outgoing
edges, which contradicts $\lambda_3=6$ ($\lambda_3(X/H)=3$ translates
to $\lambda_3=6$ when we remove arc-orientation).

Finally, from the structure of $F$ where we see that every edge must
not only belong to a triangle but also to a $4$-cycle, we have that the
only possibility is for the vertices $x,y,z$ to be connected. But then
every vertex is of degree~$4$ which means that we have described the
whole graph $X/H$ which has $9$ vertices, so that $F$ is not a $4$-separable set.
\end{proof}

Together Lemmas \ref{lem:F=4}, \ref{lem:F=5} and \ref{lem:F=6}
prove Proposition~\ref{prop:lambda4}. To prove
Proposition~\ref{prop:lambda3} we are just left with the cases when $HS$
consists of either $3$ cosets or the complement of $3$ cosets.
We shall need:

\begin{lemma}\label{lem:HxHax}
  Without loss of generality, any triangle of $X/H$ is made up of
  three cosets of the form $Hx,Hax,Ha^2x$.
\end{lemma}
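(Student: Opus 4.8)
The statement asserts that any triangle of $X/H$ can be taken (up to the earlier normalisations) to consist of three cosets $Hx, Hax, Ha^2x$. The natural approach is to go back to the definition of the arc relation in $X/H$ and read off exactly which cosets can form an oriented triangle. Recall that there is an arc $Hu\to Hv$ precisely when $HaHu\supset Hv$, i.e. when $Hv\subset HaHu$. Since by \eqref{eq:conj} the double coset $HaH$ splits into exactly $|H|$ distinct right cosets, the out-neighbours of a vertex $Hu$ are exactly the cosets $Hv$ with $Hv\subset HaHu$. The key point is that by arc-transitivity (Lemma~\ref{ref:X/H}) it suffices to analyse a single triangle through the base vertex $H$ and then transport it by a right translation.

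First I would fix an oriented triangle and use vertex-transitivity to place one of its vertices at $H$. So the triangle reads $H\to Hb \to Hc \to H$ for suitable cosets $Hb, Hc$, where the arc $H\to Hb$ forces $Hb\subset HaH$, so after adjusting the coset representative we may take $b$ to be one of the $|H|$ representatives of $HaH$; in fact, since left multiplication by elements of $H$ acts transitively on the out-neighbours of $H$ (the arc-transitivity argument in Lemma~\ref{ref:X/H}), we may normalise $b = a$, so the first vertex after $H$ is $Ha$. Next I would exploit the arc $Ha\to Hc$: this means $Hc\subset HaHa = Ha^2\cdot(a^{-1}Ha)H$, but the cleaner reading is simply $Hc\subset HaHa$, so $c$ lies in the coset-set obtained by applying the generator twice. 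The natural candidate is $Hc = Ha^2$, and I would verify that this choice is consistent with closing the triangle, namely that the remaining arc $Hc\to H$ holds, i.e. $H\subset HaHc$; with $c=a^2$ this is $H\subset HaHa^2 = Ha^3\cdots$, which one checks against the triangle-closing condition.

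The step that does the real work is showing that \emph{without loss of generality} the third vertex is $Ha^2$ rather than some other coset inside $HaHa$. Here I would argue as follows: the arc $Ha\to Hc$ together with arc-transitivity says that the out-neighbour structure at $Ha$ is a right-translate (by $a$) of the out-neighbour structure at $H$, so the out-neighbours of $Ha$ are exactly the cosets $Hd\cdot a$ with $Hd\subset HaH$; choosing the representative corresponding to $a$ itself yields $Hc = Ha\cdot a = Ha^2$. The freedom to choose which out-neighbour of $Ha$ plays the role of $c$ is exactly the ``without loss of generality'' in the statement, justified by arc-transitivity fixing the arc $H\to Ha$ and then acting transitively on the triangles containing it. Finally, transporting back by an arbitrary right translation $x$ (which is an automorphism of $X/H$ by Lemma~\ref{ref:X/H}) turns the normalised triangle $H, Ha, Ha^2$ into $Hx, Hax, Ha^2x$, giving the general form claimed.

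**Main obstacle.** I expect the delicate part to be pinning down the ``without loss of generality,'' i.e. being careful that the normalisations $b=a$ and $c=a^2$ are simultaneously achievable and genuinely come from automorphisms of $X/H$, rather than quietly changing the subgroup $H$ or the element $a$ in incompatible ways. The relation \eqref{eq:conj} ($H\cap a^{-1}Ha = \{1\}$) is what guarantees the $|H|$ cosets of $HaH$ are distinct and hence that the arc relation is well-behaved, and I would lean on it to ensure the representatives $a, a^2$ actually label distinct vertices forming a genuine oriented triangle.
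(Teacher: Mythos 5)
Your first normalisation is fine: by vertex-transitivity you may place one vertex of the triangle at $H$, and an automorphism fixing $H$ lets you assume the first arc is $H\to Ha$. The gap is precisely in the step you yourself flag as doing ``the real work'': you cannot normalise the third vertex to $Ha^2$ by invoking ``arc-transitivity fixing the arc $H\to Ha$ and then acting transitively on the triangles containing it.'' Arc-transitivity asserts that the automorphism group is transitive on arcs; it says nothing about the action of the \emph{stabiliser} of an arc, and transitivity on triangles through a fixed arc would amount to $2$-arc-transitivity, which is not available. Worse, for the automorphisms actually exhibited in Lemma~\ref{ref:X/H} (the translations $Hu\mapsto Huz$), the stabiliser of the arc $(H,Ha)$ is trivial: fixing $H$ forces $z\in H$, and fixing $Ha$ then forces $z\in H\cap a^{-1}Ha=\{1\}$ by \eqref{eq:conj}. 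So once the first arc is pinned down, no known automorphism can move the third vertex at all, and your ``choice'' of $Hc=Ha^2$ among the out-neighbours $Hah'a$, $h'\in H$, of $Ha$ is unjustified. (Also, there is nothing to ``verify'' about the closing arc: the triangle is given, and the nature of its closing arc --- $a^3\in H$ or $aha^2\in H$ --- is precisely the dichotomy exploited afterwards in Propositions~\ref{prop:3cosets} and~\ref{prop:complement}.)

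The paper closes exactly this gap by the move you explicitly disallowed: it changes the element $a$. In the relevant case $H=\{1,h\}$, the out-neighbours of $Hax$ are $Ha^2x$ and $Hahax$; if the triangle uses the arc $Hax\to Hahax$, one renames $a$ as $ha$. This is legitimate precisely because $A=H\cup Ha$ depends only on the coset $Ha$, not on the chosen representative, so replacing $a$ by $ha$ alters none of the standing hypotheses, while it turns the offending arc into one of the desired form, since $H(ha)x=Hax$ and $H(ha)^2x=Hahax$. The first arc is handled the same way, by renaming $x$ as $hx$, with no appeal to automorphisms at all. In short, the ``without loss of generality'' lives in the freedom of the representatives $x$ and $a$, not in the symmetry group of $X/H$; your insistence that the normalisation ``genuinely come from automorphisms of $X/H$, rather than quietly changing \dots the element $a$'' rules out the only argument that works.
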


\begin{proof}
  Set $H=\{1,h\}$.
  Let $Hx$ be one of the cosets involved in the triangle. The coset
  $Hx$ is connected by outgoing arcs to the cosets $Hax$ and $Hahx$.
  If $Hx$ is connected to $Hahx$ in the triangle, then rename $x$ the
  group element $hx$ to obtain that the triangle always contains an
  edge of the form $Hx \rightarrow Hax$. Now the coset $Hax$ is
  connected by outgoing arcs to the cosets $Ha^2x$ and $Hahax$.
  If $Hax$ is connected to $Hahax$ in the triangle, then rename $a$
  the group element $ha$: this does not change the set $A=H\cup Ha$.
\end{proof}

\begin{proposition}\label{prop:3cosets}
  $A=H\cup Ha$ cannot be a $2$-atom of $S$ if $HS$ consists of three
  right cosets modulo $H$.
\end{proposition}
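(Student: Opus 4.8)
The plan is to show that, once $|H|=2$ and $HS$ is exactly three cosets, $S$ is squeezed into a group of order $6$, contradicting the hypothesis $|G|\geq 2\alpha_2(S)+\kappa_2(S)$ of Theorem~\ref{thm:2coset}.

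First I would determine $|S|$ and the shape of $HS$. Since $|H|=2$ and $HS$ consists of three cosets, $|HS|=6$; on the other hand \eqref{eq:hs} and $\kappa_2(S)=|S|-1$ give $|HS|=|AS|=|A|+|S|-1=|S|+3$, so $|S|=3$, say $S=\{1,s_1,s_2\}$, and $Hs_1,Hs_2$ are the two non-trivial cosets in $HS$. Each vertex of $X/H$ has out-degree $2$, so the three vertices of $HS$ emit six arcs; by \eqref{eq:2h-1} at most $2|H|-1=3$ of them leave $HS$, hence at least three stay inside, and since an antisymmetric graph on three vertices carries at most three arcs we get $e(HS)=3$ and $\lambda_3(X/H)=3$. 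Thus $HS$ is an arc $3$-fragment of minimum possible cardinality, i.e. an arc $3$-atom, and Lemma~\ref{lem:triangle} identifies it as an oriented triangle. By Lemma~\ref{lem:HxHax} I may normalise this triangle to $\{H,Ha,Ha^2\}$ with arcs $H\to Ha\to Ha^2\to H$; in particular $HS=H\cup Ha\cup Ha^2$ and $\{Hs_1,Hs_2\}=\{Ha,Ha^2\}$. Here I use that $HS$ is genuinely $3$-separating, so its complement has at least three cosets and $|G|\geq 12$; this is exactly what fails in the exceptional case of Theorem~\ref{thm:2coset}, where the complement is only two cosets.

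Next I would exploit the closing arc. The arc $Ha^2\to H$ means $H\in\{Ha^3,Haha^2\}$, i.e. $a^3\in H$ or $aha^2\in H$; a short computation shows the latter also forces $a^3\in H$ (if $aha^2=h$ then $(ha)^2=a^{-1}$, whence $a^3=1$, while $aha^2=1$ gives $a^3=h$), so in all cases $a^3\in H=\{1,h\}$. If $a^3=h$ then $h\in\subgp{a}$, so $G=\subgp{A}=\subgp{a,h}=\subgp{a}$ is abelian; then $a^{-1}Ha=H$ and $H\cap a^{-1}Ha=H\neq\{1\}$, contradicting \eqref{eq:conj}. If $a^3=1$ then $\subgp{a}=\{1,a,a^2\}$ has order $3$, and I would run through the four placements $s_1\in\{a,ha\}$, $s_2\in\{a^2,ha^2\}$. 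The placement $s_1=a,\ s_2=a^2$ makes $\subgp{S}=\subgp{a}\not\ni h$, contradicting that $S$ generates $G$. For each of the other three placements, the memberships $as_1,as_2\in AS=HS=\{1,h,a,ha,a^2,ha^2\}$ (which hold by \eqref{eq:hs}) must be realised; inspecting the six cosets and discarding every relation that yields $ah=ha$ (forbidden by \eqref{eq:conj}) leaves only $aha=h$, i.e. $hah=a^{-1}$. Together with $a^3=h^2=1$ this identifies $G=\subgp{a,h}$ with $S_3$, of order $6<12$, a contradiction. Hence $A=H\cup Ha$ cannot be a $2$-atom when $HS$ is three cosets.

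I expect the main obstacle to be the exhaustiveness of this final check: one must verify, placement by placement, that the constraint $as_i\in HS$ can only be met through the dihedral relation $hah=a^{-1}$ (or through a relation already excluded by \eqref{eq:conj} or by the generation hypothesis), since a single admissible placement compatible with a large group $G$ would defeat the argument. The reason the casework terminates cleanly is that the order-$2$ subgroup $H$ together with $a^3=1$ leaves essentially no freedom: every consistent relation pins $G$ down to the dihedral group $S_3$.
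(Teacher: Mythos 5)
Your proposal is correct, and its structural first half coincides with the paper's proof: you compute $|S|=3$, show $e(HS)=3$ so that $HS$ is an arc $3$--atom of $X/H$, identify it as an oriented triangle via Lemma~\ref{lem:triangle} and Lemma~\ref{lem:HxHax}, and split according to whether the closing arc comes from $a^3\in H$ or from $aha^2\in H$. The endgame, however, is genuinely different. The paper pins down $S$ explicitly in each case ($S=\{x,ax,a^2x\}$, resp.\ $S=\{x,ax,ha^2x\}$ with $aha^2=h$) and then exhibits a two-element $2$--fragment, $\{1,a\}$ resp.\ $\{1,ha\}$, satisfying $|\{1,a\}S|\leq 4=|S|+1$; this contradicts the minimality of the four-element $2$--atom $A$, stays entirely inside the isoperimetric machinery, and needs only the standing bound $|G|\geq 2\alpha_2(S)+\kappa_2(S)$ rather than $|G|\geq 12$. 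You instead convert the memberships $as_1,as_2\in HS$ coming from \eqref{eq:hs} into group relations and eliminate every placement by algebra: failure of generation for $S=\{1,a,a^2\}$, commutativity of $\subgp{a,h}$ (contradicting \eqref{eq:conj}) whenever $(ha)^2=a^{-1}$ arises, and $G=\subgp{a,h}$ dihedral of order $6$, against $|G|\geq 12$, when $aha=h$. Your route costs an exhaustive placement check and invokes the generating hypothesis and genuine $3$--separability (both indeed available here); what it buys is structural information the paper does not record, in particular that the paper's case (b) is actually vacuous: as your computation shows, $aha^2=h$ forces $\subgp{a,h}$ to be cyclic of order $6$, hence abelian, which by itself contradicts \eqref{eq:conj}.

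One step needs a small repair. The normalisation of the triangle to $\{H,Ha,Ha^2\}$ does not follow from Lemma~\ref{lem:HxHax} alone: started at the vertex $H$, that lemma only yields the form $\{Hx,Hax,Ha^2x\}$ with $x\in H$, and the choice $x=h$ would give $\{H,Hah,Ha^2h\}$, a triangle which need not contain $Ha$ and on which your placement analysis (with $s_1\in\{a,ha\}$, $s_2\in\{a^2,ha^2\}$) would not apply. To force $x=1$, note that $a\in AS=HS$ by \eqref{eq:hs}, so $Ha$ is itself a vertex of the triangle; since the arc $H\to Ha$ exists in $X/H$, antisymmetry makes it the unique triangle arc leaving $H$. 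The third vertex is then an out-neighbour of $Ha$, namely $Ha^2$ or $H(ha)^2$, and in the latter case replacing $a$ by $ha$ (which leaves $A=H\cup Ha$ unchanged) produces exactly your normal form. With this one-line supplement the argument is complete.
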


\begin{proof}
  The cosets intersected by $S$ must make up an arc $3$--atom of $X/H$
  which is a triangle by Lemma~\ref{lem:triangle} and
  of the form $Hx,Hax,Ha^2x$ by Lemma~\ref{lem:HxHax}.
  We have two cases, depending on the nature of the arc leading from
  $Ha^2x$ to $Hx$.
  \begin{enumerate}[\bf (a)]
  \item We have $a^3\in H$, in which case we must have
        $S=\{x,ax,a^2x\}$. But then $|\{1,a\}S|\leq 4 = |S| +
        |\{1,a\}| -1$, so that $A=H\cup Ha$ cannot be a $2$-atom of
        $S$.
  \item Setting $H=\{1,h\}$ we have $aha^2\in H$. We cannot have
        $aha^2=1$, otherwise $a^3\in H$ and we are back to the
        preceding case, therefore $aha^2=h$. In this case we must have
        $S=\{x,ax,ha^2x\}$. Since $haha^2=1$ we have then
        $|\{1,ha\}S|\leq 4 = |S| +
        |\{1,ha\}| -1$, so that $A=H\cup Ha$ cannot be a $2$-atom of
        $S$. \qedhere
  \end{enumerate}
\end{proof}

\begin{proposition}\label{prop:complement}
  $A=H\cup Ha$ cannot be a $2$-atom of $S$ if the complement of $HS$ consists of three
  right cosets modulo $H$.
\end{proposition}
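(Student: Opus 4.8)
We argue exactly as in the proof of Proposition~\ref{prop:3cosets}, but now applied to the three cosets of the complement $C=G\setminus HS$ rather than to $HS$ itself. Since we are in the case $|H|=2$ with $X/H$ antisymmetric (Lemma~\ref{lem:symmetric}) and $HS$ is assumed $3$-separating, the remark preceding Proposition~\ref{prop:lambda4} together with Proposition~\ref{prop:lambda4} forces $C$ to be an arc $3$--atom, hence by Lemma~\ref{lem:triangle} an oriented triangle, and by Lemma~\ref{lem:HxHax} we may take it to be $\{Hx,Hax,Ha^2x\}$ with the two triangle arcs $Hx\to Hax\to Ha^2x$ realised by the generator $a$ on the named representatives. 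The closing arc $Ha^2x\to Hx$ then splits into the same two subcases as before: \textbf{(a)} $a^3\in H$, or \textbf{(b)} $aha^2\in H$ (with $a^3\notin H$). By \eqref{eq:2h-1} the cut $e(HS)\le 2|H|-1=3$, and since each of the three vertices of $C$ has in-degree $2$ with one in-arc internal to the triangle, exactly three arcs enter $C$ from $HS$, one into each coset.

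The key local observation is that $S$ is pinned down on the three source cosets feeding these arcs. Indeed, \eqref{eq:hs} gives $HaS\subseteq AS=HS$, so $as\notin C$ for every $s\in S$; equivalently, no representative lying in $S$ can be mapped by $a$ into $C$. Consider a source coset $Hw\in HS$ carrying the external in-arc to, say, $Hx$: this arc is the generator $a$ acting on the representative $w'$ of $Hw$ with $aw'\in\{x,hx\}$, and the preceding remark forces $w'\notin S$; since $Hw\in HS$ meets $S$, the other representative $hw'$ is the unique element of $S\cap Hw$. Doing this for all three source cosets determines $S$ completely on them and, crucially, tells us which representatives $S$ avoids. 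Feeding this back through the closing relation (case (a) or (b)) in the manner of Proposition~\ref{prop:3cosets}, one assembles a two-element set $X=\{1,g\}$ with $g\in\{a,ha\}$ (the same element that closes the triangle) for which $gS$ differs from $S$ only inside these three cosets, so that $|XS|\le |S|+1=|S|+|X|-1$. Hence $X$ is a $2$--fragment of $S$ of cardinality $2$; checking $|X^{*}|\ge 2$ from $|G|\ge 2\alpha_2(S)+\kappa_2(S)$, this exhibits a $2$--atom of cardinality $2$, contradicting our standing assumption and therefore the hypothesis that $A=H\cup Ha$ is a $2$--atom.

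The main obstacle, and the reason this case is genuinely harder than Proposition~\ref{prop:3cosets}, is that here $S$ is \emph{not} a three-element set: by \eqref{eq:hs} the set $S$ meets every one of the many cosets outside $C$, so no global left translate of $C$ is invariant and the small set $X$ cannot be read off from a short enumeration of $S$. The contradiction must instead be extracted purely from the local data at the three boundary arcs of the triangle $C$, using the rigidity $HaS\subseteq HS$ to exclude the offending representatives. One may equivalently phrase this via the $*$--duality $C=A^{*S}$: since $A$ is a $2$--atom of $S$, the set $C$ is a left--$H$--periodic $2$--fragment of $S^{-1}$ with $\kappa_2(S^{-1})=\kappa_2(S)=|S|-1$, and reversing the arcs of $X/H$ turns the three in-arcs of $C$ into the outgoing cut of a triangle, so that the computation above is literally the computation of Proposition~\ref{prop:3cosets} carried out for $S^{-1}$. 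The only delicate bookkeeping is to track the two representatives in each coset through the closing relation and to confirm in both subcases (a) and (b) that the resulting $X$ satisfies $|XS^{\varepsilon}|\le |S^{\varepsilon}|+1$.
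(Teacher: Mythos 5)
Your opening moves are fine and agree with the paper: $C=G\setminus HS$ is an oriented triangle $Hx\cup Hax\cup Ha^2x$, exactly one arc enters each of its three cosets, and the rigidity $HaS\subseteq HS$ pins $S$ down so that $\overline{S}=C\cup a^{-1}C$ (your ``one omitted representative per source coset'' is exactly this identity). The gap is the sentence ``one assembles a two-element set $X=\{1,g\}$ with $g\in\{a,ha\}$ \ldots\ so that $|XS|\le|S|+1$''. What you have actually established is only $gS\setminus S\subseteq HS\setminus S=\{w_1',w_2',w_3'\}$, i.e.\ $|XS|\le|S|+3$; to reach $|S|+1$ you would need $g^{-1}w_i'\notin S$ for at least two of the three $i$, and this does \emph{not} follow from the local data you invoke. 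Concretely, take $G=A_5$, $h=(12)(34)$, $a=(135)$, $x=1$, $C=H\cup Ha\cup Ha^2$ and $S:=G\setminus(C\cup a^{-1}C)$. Every local fact your extraction uses holds there: the triangle, the case (a) relation $a^3=1$, condition \eqref{eq:conj}, the bound \eqref{eq:2h-1}, $HaS\subseteq HS$, and $S$ missing exactly one element of each source coset. Yet, writing $\overline{S}=\langle a\rangle\cup h\langle a\rangle\cup a^{-1}h\langle a\rangle$, one checks that $a\overline{S}$ and $ha\overline{S}$ each meet $S$ in three points, so $|\{1,a\}S|=|\{1,ha\}S|=|S|+3$. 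Hence no left translate of the kind you propose can produce the contradiction; any valid completion must use something beyond the boundary data, which is precisely where your sketch stops.

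What the paper does instead is structurally different in both subcases. In case (a), $\overline{S}=(\langle a\rangle\cup h\langle a\rangle\cup a^{-1}h\langle a\rangle)x$ is a union of left $\langle a\rangle$-cosets right-translated by $x$, hence $\overline{S}x^{-1}ax=\overline{S}$ and so $Sx^{-1}ax=S$: the witness is a \emph{right} translation by the conjugate $x^{-1}ax$ (in the $A_5$ model above one indeed has $Sa=S$), and this contradicts $\kappa_1(S^{-1})=|S|-1$; your candidates $\{1,a\},\{1,ha\}$, acting on the left, never see this element. In case (b) there is no translate argument at all: $aha^2=h$ gives $ahaha=1$, so $H,Ha,Haha$ form a second triangle through the edge $H\to Ha$, producing a $K_4^*$; Lemma~\ref{lem:K4*} then forces $X/H$ to be an octahedron, so $HS$ itself consists of three cosets and Proposition~\ref{prop:3cosets} finishes. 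Your closing appeal to duality does not repair the gap either: $C=A^{*S}$ is only a $2$-\emph{fragment} of $S^{-1}$, not a $2$-atom of the form $H\cup Ha'$, and nothing makes $HS^{-1}$ a union of three right cosets, so the situation is not ``literally Proposition~\ref{prop:3cosets} for $S^{-1}$''.
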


\begin{proof}
  By Lemma~\ref{lem:triangle}, the three cosets of the complement of $HS$ must form a triangle so
  that, by Lemma~\ref{lem:HxHax}, the complement of $HS$ can be
  assumed to be equal to
  $$Hx\cup Hax \cup Ha^2x.$$
  From $HS=HaS$ we have that the complement of $S$ must be equal to:
  $$\overline{S} =
    a^{-1}(Hx\cup Hax \cup Ha^2x) \cup Hx\cup Hax \cup Ha^2x.$$
  Now since there is an arc in $X/H$ from $Ha^2x$ to $Hx$, we have
  either $a^3\in H$ or $aha^2\in H$, where we write $H=\{1,h\}$.
  \begin{enumerate}[\bf (a)]
  \item If $a^3\in H$, then clearly $\overline{S}x^{-1}ax =
    \overline{S}$, so that $Sx^{-1}ax = S$, in other words $S$ is
    stable by right multiplication by the subgroup generated by
    $x^{-1}ax$, but this contradicts $\kappa_1(S)=\kappa_1(S^{-1})=|S|-1$.
   \item If $aha^2\in H$ then we must have $aha^2=h$, because
         $aha^2=1$ implies $a^3=h$ which brings us back to the
         preceding case. Now $aha^2=h$ is equivalent to $ahaha=1$,
         which implies that the cosets $H$, $Ha$, $Haha$ make up a
         triangle in $X/H$. But this means that the four cosets
         $H,Ha,Ha^2,Haha$ form a $K_4^*$ in $X/H$. This in turn
         implies by Lemma~\ref{lem:K4*}
         that $X/H$ is an octahedron, which means that $HS$
         must consist of exactly three cosets modulo $H$. The result
         now follows from Proposition~\ref{prop:3cosets}. \qedhere
  \end{enumerate}
\end{proof}

Together, Propositions \ref{prop:lambda4}, \ref{prop:3cosets} and
\ref{prop:complement} complete the proof of
Proposition~\ref{prop:lambda3}.
Given the remark after Theorem~\ref{thm:2coset} stating that the
complement of $HS$ contains at least two cosets and the ensuing
discussion leading to Proposition~\ref{prop:lambda3}, this
in turn proves Theorem~\ref{thm:2coset}.

\section{Conclusion and Comments}

\subsection{Proof of Theorem \ref{thm:main} }\label{sec:proof}

The three possibilities in the Theorem depend on the cardinality 
and structure of the $2$--atom of $S$. 
We may assume that $\alpha_2 (S)\le \alpha_2 (S^{-1})$. 
Also, if $\kappa_1 (S)<|S|-1$ then, by Theorem \ref{thm:1atom}, 
the $1$--atom of $S$ is a subgroup $H$ leading to case {\it (ii)}. 
Thus we may assume that $\kappa_2 (S)=\kappa_1 (S)=|S|-1$.

According to Proposition \ref{prop:m+3}, either a $2$--atom $U$ of $S$ has cardinality $2$, which easily yields case {\it (i)}, or $U$ is periodic. 
Then, by Lemma \ref{lem:k2=k1} and Lemma \ref{lem:2coset}, 
either there is a $2$--fragment of $S$ which is a subgroup, giving
case {\it (ii)}, 
or there is a $2$--atom  which is the union of two right cosets of
some subgroup. In this last case, Theorem \ref{thm:2coset} gives case
{\it (iii)}:
note that equality \eqref{eq:conj} that we have used throughout
Section~\ref{sec:notsubgroups} gives the additional property
$|HaH|=|H|^2$ mentioned in case {\it (iii)}.

\subsection{Examples when {\it (i)} and {\it (ii)} in
  Theorem~\ref{thm:main} do not hold}

As it has been already mentioned, the degenerate case in Theorem
\ref{thm:main} {\it (iii)} can actually hold, without any of the two other
cases being satisfied. 
We next give an infinite family of examples which illustrate this fact. 

Let $p$ be a prime and let $q$ be an odd prime divisor of $p-1$. Let
$H_0$ be the subgroup of order $q$ of the multiplicative group of
$\zp$.
Consider the semidirect product $G = \zp \ltimes H_0$ defined by:
  $$(x,h)(y,k) = (x+hy, hk).$$ 
Set $a=(1,1)$ and $H=\{0\}\times H_0$. We have $a^{-1}=(-1,1)$ and:
\begin{align*}
  Ha      &= \{(h,h), h\in H_0\}\\
 a^{-1}H  &= \{(-1,h), h\in H_0\}\\
 a^{-1}Ha &= \{(-1+h,h), h\in H_0\}
\end{align*}
Let $A = H\cup Ha$ and
$$B= H\cup Ha\cup a^{-1}H \cup a^{-1}Ha$$
and set $S = G\setminus B$.

Since $-1\not\in H_0$ ($H_0$ has odd order) we have $a^{-1}H \cap
Ha=\emptyset$ and $a^{-1}Ha\cap H = \{(0,1)\}$. Therefore
  $$(H\cup Ha) \cap (a^{-1}H \cup a^{-1}Ha) = \{(0,1)\}$$
and $|S|=|G|- 4|H|+1$. We have $AS = G\setminus A$, hence
  $$|AS| = |S| + |A| -1.$$
Note that $B=B^{-1}$, so that $S=S^{-1}$. The subgroup $H$ does not satisfy
condition ${\it (ii)}$ of Theorem~\ref{thm:main} since we have
  $$|HS| = |S| + 2|H|-1.$$
Let us check the
other proper subgroups of $G$. Since $|G|=pq$ every proper subgroup is
cyclic and
$$(x,h)^i = (x(1+h+\cdots +h^{i-1}),h^i)$$
so that $(x,h)$ is of order $p$ when $h=1$ and of order $q$ otherwise.
There is therefore just one subgroup of order $p$ namely
$$G_p=\{(x,1),x\in\zp\}.$$
It is straightforward to check that for every $b\in B$,
$$G_pb\cap S \neq\emptyset$$
hence $G_pS=G$.
It is also straightforward to check that all subgroups of order $q$
coincide with the set of conjugate subgroups
  $$K_x = (x,1)^{-1}H(x,1) = \{(x(h-1),h), h\in H_0\}$$
where $x$ ranges over $\zp$.
It is again readily checked that for $x\neq 0,1$ and for every $b\in
B$,
$$K_xb\cap S \neq\emptyset$$
so that $K_xS=G$. For $x=0$ we have $K_0=H$ and for $x=1$,
$K_1=a^{-1}Ha$ in which case
$$|K_1S| = |S| + 2|K_1| -1.$$
Since $S^{-1}=S$ we have shown that case {\it (ii)} of
Theorem~\ref{thm:main} does not hold.

For case {\it (i)} of Theorem~\ref{thm:main} to hold, there must exist
a group element $r\neq 1$ such that $|\{1,r\}S| = |S|+1$. But this means
that we would have $|\{1,r\}B| = |B|+1$ as well, and this is readily
excluded for all $r$.

Hence only case {\it (iii)} of Theorem~\ref{thm:main} can hold for
this group $G$ and this subset $S$. 

Note that if we have $q=(p-1)/2$ (so that $q$ is a Sophie Germain
prime),
then we have $|S| = |G| - 4|H|+1 = |G|
-2\sqrt{2}|G|^{1/2}(1-O(q^{-1/2}))$.
In particular, the condition $|S|\leq |G|-4|G|^{1/2}$ in Corollary~\ref{cor:sqrt}
cannot be improved upon significantly.

\end{document}